\documentclass[a4paper, 12pt]{article}
\pdfoutput=1

\usepackage[T1]{fontenc}
\usepackage[utf8]{inputenc}

%%%%% Fonts, symbols, colours, microtypography
\usepackage{amsfonts,amsmath,amssymb,mathtools,dsfont,microtype} % Symbols & microtypography
\usepackage[usenames, dvipsnames]{xcolor} % Colours with names -- see https://www.overleaf.com/learn/latex/Using_colours_in_LaTeX for list
\usepackage[noBBpl]{mathpazo} % Palatino, but not for mathbb

\DeclareFontFamily{U} {cmr}{}

\DeclareFontShape{U}{cmr}{m}{n}{
	<-6> cmr5
	<6-7> cmr6
	<7-8> cmr7
	<8-9> cmr8
	<9-10> cmr9
	<10-12> cmr10
	<12-> cmr12}{}

\DeclareSymbolFont{Xcmr} {U} {cmr}{m}{n}
\DeclareMathSymbol{\Delta}{\mathord}{Xcmr}{'001}
\DeclareMathSymbol{\Upsilon}{\mathord}{Xcmr}{'007}
\DeclareMathSymbol{\Omega}{\mathord}{Xcmr}{'012}

%%%%% Figures, tables, lists
\usepackage[labelsep = period, labelfont = bf, justification = centering]{caption}
\usepackage{float,graphicx,subcaption}
\usepackage{booktabs,makecell}
\usepackage{enumitem,mdwlist}
\setlist[itemize]{topsep=0ex,itemsep=0ex,parsep=0.4ex}
\setlist[enumerate]{topsep=0ex,itemsep=0ex,parsep=0.4ex}

%%%%% Page layout
\usepackage[left = 2.5cm, right = 2.5cm, top = 2.5cm, bottom = 2.5cm, headsep = 12pt, headheight = 15pt]{geometry}
\usepackage{parskip}

%%%%% Hyperlinking
\usepackage[hyphens]{url} % Urls together with line breaking them
\usepackage[linktoc = all, hidelinks, colorlinks, unicode=true, pagebackref=true]{hyperref} % Must be loaded after url
\usepackage[capitalise, compress, nameinlink, noabbrev]{cleveref} % Must be loaded after hyperref
\hypersetup{linkcolor={blue!70!black}, citecolor={green!70!black}, urlcolor={blue!70!black}}

%%%%% Theorem environments
\usepackage{amsthm,thmtools,thm-restate}
\declaretheorem[name = Theorem, numberwithin = section, style = plain]{theorem}
\declaretheorem[name = Claim, numberwithin = theorem, style = plain]{claim}

\declaretheorem[name = Conjecture, numberlike = theorem, style = plain]{conjecture}
\declaretheorem[name = Definition, numberlike = theorem, style = definition]{definition}
\declaretheorem[name = Lemma, numberlike = theorem, style = plain]{lemma}
\declaretheorem[name = Observation, numberlike = theorem, style = plain]{observation}
\declaretheorem[name = Proposition, numberlike = theorem, style = plain]{proposition}

\declaretheorem[name = Remark, numberlike = theorem, style = definition]{remark}
\crefname{observation}{Observation}{Observations}
\crefname{conjecture}{Conjecture}{Conjectures}
\crefname{claim}{Claim}{Claims}

%%%%% Bibliography
%% Setup the matha font (from mathabx.sty) and define arrows (from mathabx.dnl)
\DeclareFontFamily{U}{matha}{\hyphenchar\font45}
\DeclareFontShape{U}{matha}{m}{n}{
	<5> <6> <7> <8> <9> <10> gen * matha
	<10.95> matha10 <12> <14.4> <17.28> <20.74> <24.88> matha12
}{}
\DeclareSymbolFont{matha}{U}{matha}{m}{n}

\DeclareMathSymbol{\specialuparrow}{\mathrel}{matha}{"D2}
\DeclareMathSymbol{\specialrightarrow}{\mathrel}{matha}{"D1}

\renewcommand*{\backref}[1]{}
\renewcommand*{\backrefalt}[4]{
	\ifcase #1 Not cited.%
	\or $\specialuparrow$#2%
	\else $\specialuparrow$#2%
	\fi%
}

%%%%% Renew commands
\renewcommand{\epsilon}{\varepsilon}
\renewcommand{\ge}{\geqslant}
\renewcommand{\le}{\leqslant}
\renewcommand{\geq}{\geqslant}
\renewcommand{\leq}{\leqslant}
\renewcommand{\emptyset}{\varnothing}

%%%%% Brackets
\DeclarePairedDelimiter{\abs}{\lvert}{\rvert}

\DeclarePairedDelimiter{\floor}{\lfloor}{\rfloor}
\DeclarePairedDelimiter{\set}{\{}{\}}

%%%%% Commands
\newcommand{\defn}[1]{\textcolor{Maroon}{\emph{#1}}}

\newcommand{\Ztwo}{\mathbb{Z}/2\mathbb{Z}}
\newcommand{\Zk}{\mathbb{Z}/k\mathbb{Z}}
\newcommand{\Zr}{\mathbb{Z}/\rho\mathbb{Z}}
\newcommand{\ZNN}{\mathbb{Z}_{\geqslant 0}}
\newcommand{\db}{d^{\ast}}
\newcommand{\di}{d^{I_\infty}}
\newcommand{\da}{\bar{d}}
\newcommand{\dsup}{d_{\sup}^{I_\infty}}
\newcommand{\notempty}{\neq \varnothing}

\newcommand{\W}{\mathbf{W}}

%%%%% mathbold and mathcal

\newcommand{\bN}{\mathbb{N}}
\newcommand{\bP}{\mathbb{P}}

\newcommand{\bZ}{\mathbb{Z}}
\newcommand{\cA}{\mathcal{A}}
\newcommand{\cF}{\mathcal{F}}
\newcommand{\cO}{\mathcal{O}}

\newcommand{\fF}{F}
\newcommand{\fG}{G}
\newcommand{\dsupG}{d_{\sup\fG}^{I_\infty}}

\title{The structure and density of $k$-product-free sets in the free semigroup}
\date{7 July 2023}

\begin{document}

\author{
Freddie Illingworth\footnotemark[2]\qquad Lukas Michel\footnotemark[2]\qquad Alex Scott\footnotemark[2]
}

\maketitle

\begin{abstract}
	The free semigroup $\cF$ over a finite alphabet $\cA$ is the set of all finite words with letters from $\cA$ equipped with the operation of concatenation. A subset $S$ of $\cF$ is $k$-product-free if no element of $S$ can be obtained by concatenating $k$ words from $S$, and strongly $k$-product-free if no element of $S$ is a (non-trivial) concatenation of at most $k$ words from $S$.

    We prove that a $k$-product-free subset of $\cF$ has upper Banach density at most $1/\rho(k)$, where $\rho(k) = \min\set{\ell \colon \ell \nmid k - 1}$. We also determine the structure of the extremal $k$-product-free subsets for all $k \notin \set{3, 5, 7, 13}$; a special case of this proves a conjecture of Leader, Letzter, Narayanan, and Walters. We further determine the structure of all strongly $k$-product-free sets with maximum density. Finally, we prove that $k$-product-free subsets of the free group have upper Banach density at most $1/\rho(k)$, which confirms a conjecture of Ortega, Ru\'{e}, and Serra.
\end{abstract}

\renewcommand{\thefootnote}{\fnsymbol{footnote}} % Make affiliation marks symbols

\footnotetext[0]{\emph{2020 MSC}: 20M05 (free semigroups), 05D05 (extremal set theory).}

\footnotetext[2]{Mathematical Institute, University of Oxford, United Kingdom (\textsf{\{\href{mailto:illingworth@maths.ox.ac.uk}{illingworth},\href{mailto:michel@maths.ox.ac.uk}{michel},\href{mailto:scott@maths.ox.ac.uk}{scott}\}@maths.ox.ac.\allowbreak uk}). Research of FI and AS supported by EPSRC grant EP/V007327/1.}

\renewcommand{\thefootnote}{\arabic{footnote}} % Return to normal footnote symbols

\section{Introduction}\label{sec:intro}

A subset $S$ of a (semi)group $G$ is said to be \defn{product-free} if $x \cdot y \notin S$ for all $x, y \in S$. Two very natural questions present themselves.
\begin{enumerate}
    \item[] \textbf{Density:} How dense can the largest product-free subset of $G$ be?
    \item[] \textbf{Structure:} What is the structure of the densest product-free subsets of $G$?
\end{enumerate}
These problems have been extensively studied over the last fifty years. In the finite abelian case, this culminated in a solution to the density problem by Green and Ruzsa~\cite{GR05} and the structure problem by Balasubramian, Prakash, and Ramana~\cite{BPR16}. The finite non-abelian case was first investigated by Babai and S\'{o}s~\cite{BS85}. This case behaves very differently with the possibility of the largest product-free subsets having vanishing density as shown by the seminal work of Gowers~\cite{Gowers08} on quasirandom groups. Recent breakthroughs include the alternating group where Eberhard~\cite{Eberhard16} solved the density problem (up to logarithmic factors) and Keevash, Lifshitz, and Minzer~\cite{KLM22} solved the structure problem. We refer the reader to \cite{Kedlaya09,TV17} for surveys of the area.

In the infinite non-abelian setting, Leader, Letzter, Narayanan, and Walters~\cite{LLNW20} solved the density problem for a \defn{free semigroup}\footnote{The free semigroup on alphabet $\cA$ is the set of all finite words whose letters are in $\cA$ equipped with the associative operation of concatenation and whose identity is the empty word.} $\cF$ on a finite alphabet $\cA$ with respect to the measure that assigns weight $\abs{\cA}^{-n}$ to each word of length $n$. This is the natural measure induced by sampling uniformly random words from $\cF$ and gives total weight 1 to the words of length $n$. As noted in \cite{LLNW20}, the counting measure leads to degenerate results (in particular, intuitively small product-free sets with density close to 1). Leader, Letzter, Narayanan, and Walters solved the density problem proving the following where $\db$ is the \emph{upper Banach density} (see \cref{sec:density} for formal definitions).

\begin{theorem}[\cite{LLNW20}]\label{thm:LLNW}
    Let $\cA$ be a finite set and $\cF$ be the free semigroup with alphabet $\cA$. If $S \subset \cF$ is product-free, then $\db(S) \leq 1/2$.
\end{theorem}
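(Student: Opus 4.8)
The plan is to argue by contradiction. Set $\alpha = \db(S)$, suppose $\alpha > 1/2$, and derive a contradiction with the definition of upper Banach density in \cref{sec:density}. For a word $w$ and an integer $m \geqslant 0$ write $d_m(w) = |S \cap w\cA^m|/|\cA|^m$ for the density of $S$ at depth $m$ in the subtree rooted at $w$; in particular $d_0(w) = \mathds{1}[w \in S]$ and $d_m(w) = |\cA|^{-1}\sum_{a \in \cA} d_{m-1}(wa)$. The only consequence of product-freeness that I would use is the following. If $x \in S$ then $x v \notin S$ for every $v \in S$, so for any word $u$ and any $m$ the map $v \mapsto xv$ sends the $d_m(u)|\cA|^m$ words of $S \cap u\cA^m$ to that many distinct words, all lying in $(xu)\cA^m$ and none in $S$. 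Hence
\begin{equation*}
    d_m(xu) + d_m(u) \leqslant 1 \qquad \text{whenever } x \in S, \text{ for every word } u \text{ and every } m. \tag{$\star$}
\end{equation*}

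From the definition of $\db$ I would first extract a \emph{dense cone}: for every $\epsilon > 0$ there is a word $w$ and an arbitrarily long run $R$ of consecutive depths along which the averages $\tfrac{1}{|R|}\sum_{m \in R} d_m(w)$ exceed $\alpha - \epsilon$; since $0 \leqslant d_m(w) \leqslant 1$, a $(1-O(\sqrt{\epsilon}))$-fraction of the depths $m \in R$ then satisfy $d_m(w) \geqslant \alpha - \sqrt{\epsilon} > 1/2$. The key step is to locate a dense cone whose root lies in $S$. Here I would propagate density down the tree: because no cone can beat the near-optimal average by more than $o(1)$ and $d_m(w)$ is the average of $d_{m-1}$ over the children of $w$, passing to the best child (or to a suitable positive fraction of the children) preserves the dense-cone property with only a small loss, so among the descendants $wv$ of $w$ at depth $\ell$ — for $\ell$ ranging over a long window of small values — most are again dense cones. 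Choosing $\ell$ to be one of the many depths with $d_\ell(w) > 1/2$, the set $\{v \in \cA^\ell : wv \in S\}$ has density exceeding $1/2$ and so must meet the set of $v$ for which $wv$ is a dense cone; fixing such a $v$ yields $x := wv \in S$ that is itself the root of a dense cone. Feeding $x \in S$ into $(\star)$ with $u = x$ forces $d_m(xx) \leqslant 1 - d_m(x) < 1/2$ along the common run of depths, so the cone below $xx$ is sparse — and running the density propagation in reverse (a parent of a sparse cone cannot be too dense) this should contradict the density of the cone below $x$.

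The step I expect to be the main obstacle is exactly this last reconciliation. A single naive use of $(\star)$ — as in the classical shift argument for sum-free subsets of $\mathbb{N}$ — loses a factor and gives only $\db(S) \leqslant 2/3$ rather than $1/2$, and closing this gap requires using $(\star)$ globally rather than once: for instance one can try to build an infinite chain $x_1, x_2, \dots \in S$, each a prefix of the next and each the root of a dense cone, and then track how the inequalities $(\star)$ constrain the quantities $d_m$ \emph{simultaneously at every depth} along the whole chain, forcing $\alpha \leqslant 1/2$. The bookkeeping in the propagation step — controlling how the error terms accumulate so that the window of usable depths does not collapse before it reaches a depth $\ell$ at which $S$ is itself dense — together with matching the precise averaged definition of $\db$ from \cref{sec:density}, is where the real work lies. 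Throughout, the example $S = \{v \in \cF : |v| \text{ is odd}\}$, which is product-free and (one checks) has $\db(S) = 1/2$, should be kept in mind as the conjecturally unique extremal configuration and as a consistency check for every inequality used.
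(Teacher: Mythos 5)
Your inequality $(\star)$ is sound, but the two steps carrying all the weight both fail when $\abs{\cA} \geq 2$, and for the same reason: a subtree hanging at depth $\abs{x}$ has only an $\abs{\cA}^{-\abs{x}}$ share of the weight of its ancestor's cone. For the intended contradiction, $(\star)$ with $u = x$ does make the cone below $xx$ sparse, but this is perfectly compatible with $d_m(x) > 1/2$: even deleting \emph{all} of $S$ from $xx\cF$ lowers $d_m(x)$ by at most $\abs{\cA}^{-\abs{x}}$, so ``a parent of a sparse cone cannot be too dense'' is quantitatively false. This is exactly the obstruction recorded in \cref{sec:density}: $B = \cF \setminus (w\cF)$ has the nested pairwise disjoint translates $B, wB, w^2B, \dotsc$ yet density $1 - \abs{\cA}^{-\abs{w}}$, so disjoint translates along a single branch (your chain $x_1, x_2, \dotsc$) can never force a bound like $1/2$; for the same reason your ``naive use gives $2/3$'' heuristic, imported from $\bN$ where shifting preserves density, gives essentially nothing here. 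A second, related gap is the propagation step: relative densities in cones are \emph{not} bounded above by $\db(S)$ (e.g.\ $S = x\cF$ has relative density $1$ below $x$ but tiny $\db$), so ``no cone can beat the near-optimal average by more than $o(1)$'' is unjustified with $\alpha = \db(S)$ as benchmark; averaging plus Markov only guarantees a \emph{small} positive fraction of dense depths/children, not ``most'', and since the set $\set{v \colon wv \in S}$ has density only just above $1/2$, the two sets need not meet. So even the existence of a dense cone rooted at some $x \in S$ is not established by your argument.

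These are precisely the points where the paper's proof (\cref{thm:pfdensity}\ref{thm:skpfdensity} with $k = 2$) does its work, and it does so differently. It first passes to a diagonal subsequence of intervals so that all relative densities $\di_{w\cF}(\cdot)$ exist and are additive (\cref{sec:technical}), and replaces your benchmark $\db(S)$ by the sup density $\dsup(S) = \sup_w \di_{w\cF}(S)$, which \emph{is} an upper bound for every cone and so makes the ``children cannot be much denser'' reasoning legitimate. The exponential loss $\abs{\cA}^{-\abs{w}}$ is then beaten not by one translate but by a covering argument: $S$ is split into prefix-free layers $(C_k)$ (a steeplechase), and \cref{lem:steepleCw} shows that for $w$ nearly attaining $\dsup(S)$ the translated subtrees $cw\cF$, taken over $c$ in sufficiently many layers, cover all but measure $2\epsilon$ of $C_1\cF$ --- there is a fresh chance to ``spell $w$'' below every layer along every branch. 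Since $\di_{cw\cF}(cS) = \di_{w\cF}(S)$ by \cref{lem:densitycancel}, this yields $\di_{C_1\cF}(S^2) \geq \dsup(S) - 3\epsilon^{1/3}$ (\cref{lem:densityAB}), and disjointness of $S$ and $S^2$ gives $\di(S) + \dsup(S) \leq 1 + o_\epsilon(1)$, hence $\db(S) \leq 1/2$. To salvage your outline you would need an analogue of this covering step: dense translated cones hanging below \emph{many} elements of $S$, one along essentially every branch of a positive-measure cone, rather than a single nested chain.
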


There is a simple class of examples of large product-free subsets of $\cF$ that show that $1/2$ in \cref{thm:LLNW} is best possible. For a non-empty subset $\Gamma \subset \cA$ the \defn{odd-occurrence set} $\cO_\Gamma \subset \cF$ generated by $\Gamma$ is the set of words in which the total number of occurrences of letters from $\Gamma$ is odd (note that if $\Gamma = \cA$, then $\cO_\Gamma$ consists of all words of odd length). It is easy to see that these are product-free with density $1/2$. Leader, Letzter, Narayanan, and Walters conjectured that these are the only examples.

\begin{conjecture}[{\cite{LLNW20}}]\label{conj:LLNW}
    Let $\cA$ be a finite set and $\cF$ be the free semigroup with alphabet $\cA$. If $S \subset \cF$ is product-free and $\db(S) = 1/2$, then $S \subset \cO_\Gamma$ for some nonempty subset $\Gamma \subset \cA$.
\end{conjecture}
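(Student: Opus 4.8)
The plan is to rephrase the statement as a rigidity problem about characters of $\cF$ and attack it in two stages. Write $\bar\pi \colon \cF \to \Ztwo^{\cA}$ for the homomorphism sending a word to the vector of parities of its letter-counts; then $\cO_\Gamma = \{w : \langle \mathds{1}_\Gamma, \bar\pi(w)\rangle = 1\}$, and every semigroup homomorphism $\cF \to \Ztwo$ has this shape and factors through $\bar\pi$. Hence $S \subseteq \cO_\Gamma$ for some nonempty $\Gamma \subseteq \cA$ if and only if the $\Ztwo$-linear system $\{\langle c, \bar\pi(w)\rangle = 1 : w \in S\}$ is solvable, equivalently if and only if there is \emph{no} list $w_1, \dots, w_m \in S$ with $m$ odd and $\bar\pi(w_1) + \dots + \bar\pi(w_m) = 0$ (the nonemptiness of $\Gamma$ then being automatic, since $S$ is dense and hence nonempty). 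So the goal is to exclude such ``odd null-combinations'' from $S$, and I would do this by first showing (\emph{stability}) that $S$ coincides, off a negligible set, with a single fixed $\cO_\Gamma$ on long windows of lengths, and then (\emph{rigidity}) bootstrapping this to exact containment using product-freeness.

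For the stability step, write $d_n(S) = \abs{\cA}^{-n}\abs{S \cap \cF_n}$ for the density of $S$ among words of length $n$, where $\cF_n$ is the set of words of length $n$. By \cref{thm:LLNW} we have $\db(S) \le 1/2$ unconditionally, so, since $\db(S) = 1/2$, for every $\varepsilon > 0$ there are windows $[M, M+L)$ of lengths with $L$ arbitrarily large on which the average of $d_n(S)$ exceeds $1/2 - \varepsilon$. On such a window I would run the argument behind \cref{thm:LLNW} in a quantitative (stability) form to force $S$ into an approximately graded shape, namely $\mathds{1}_S(uv) = \mathds{1}_S(u) + \mathds{1}_S(v)$ for all but a $o(1)$-fraction of pairs $(u,v)$ with $\abs u, \abs v$ in the window — an identity that holds \emph{exactly} on every $\cO_\Gamma$, since the parity-count of a concatenation is the sum of the parity-counts of the factors. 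Feeding this approximate-homomorphism condition into the rigidity of $\Ztwo$-valued approximate homomorphisms of $\cF$ (via Fourier analysis on the finite group $\Ztwo^{\cA}$, through which all genuine homomorphisms factor) should then produce a fixed nonempty $\Gamma \subseteq \cA$ with $d_m(S \triangle \cO_\Gamma) \to 0$ along the window, with an explicit rate in the window length. This is the heart of the matter: the obvious inequality from a single split point, $d_{a+b}(S) + d_a(S)\,d_b(S) \le 1$ (from unique factorisation at the split), is not by itself sharp enough at the extremal sets $\cO_\Gamma$ to detect their structure, so I expect the stability to require a genuinely finer estimate — for instance averaging the product-free constraint over all $n - 1$ split points of a uniformly random word of length $n$, or an entropy bound on its ``prefix-membership profile'' — together with a separate treatment of the oscillating regime (as in the parity-of-length set $\cO_{\cA}$, where the $d_n(S)$ do not converge), which one would isolate by passing to a suitable residue class of lengths.

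For the rigidity step, let $\phi$ be the homomorphism with $\cO_\Gamma = \phi^{-1}(1)$, and suppose for contradiction that some $w \in S$ has $\phi(w) = 0$. For every $v \in S \cap \cO_\Gamma$, product-freeness forces $wv \notin S$, while $\phi(wv) = \phi(w) + \phi(v) = 1$, so $wv \in \cO_\Gamma \setminus S$; since $v \mapsto wv$ is injective this gives $d_n(\cO_\Gamma \setminus S) \ge \abs{\cA}^{-\abs w}\, d_{n - \abs w}(S \cap \cO_\Gamma)$ for every $n > \abs w$. Now take a window of lengths long enough in terms of $\abs w$ that the stability step gives $d_m(S \triangle \cO_\Gamma) < \tfrac14\abs{\cA}^{-\abs w}$ throughout it, and choose $n$ in this window with $n - \abs w$ also in it and with $d_{n - \abs w}(\cO_\Gamma)$ bounded away from $0$ (picking the parity of $n - \abs w$ appropriately when $\Gamma = \cA$); then the left-hand side above is $< \tfrac14\abs{\cA}^{-\abs w}$ while the right-hand side exceeds $\tfrac14\abs{\cA}^{-\abs w}$, a contradiction. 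Hence $S \subseteq \cO_\Gamma$. This last step is essentially routine, and the entire difficulty is concentrated in obtaining the stability statement with an error term small enough — in particular, smaller than $\abs{\cA}^{-\abs w}$ for each fixed word $w$ one wants to rule out — to feed into it.
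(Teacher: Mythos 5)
Your reduction to excluding odd null-combinations and your final ``rigidity'' deduction are fine in outline (modulo a small fix: exact density $1/2$ only forces $d_m(S \triangle \cO_\Gamma)$ to be small on \emph{most} layers of a good window, not ``throughout it'', so you must pick $n$ with both $n$ and $n-\abs{w}$ good layers). The genuine gap is the stability step, which you assume rather than prove. Nothing in the argument behind \cref{thm:LLNW} is quantitative in the way you need: it yields the bound $\db(S)\le 1/2$ but gives no structural information about sets at or near the extremum, and there is no off-the-shelf ``$99\%$-homomorphism'' theorem for $\Ztwo$-valued functions on the free semigroup with respect to the layer measures you are using -- the Fourier-analytic rigidity you invoke lives on the finite group $\Ztwo^{\cA}$, but your approximate grading condition $\mathds{1}_S(uv)\equiv\mathds{1}_S(u)+\mathds{1}_S(v)$ is a statement about a function on $\cF$ that you have not shown approximately factors through $\bar\pi$; establishing that is essentially the whole problem. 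It is telling that the authors of this paper pose a closely related stability statement (density $>1/2-\epsilon$ implies $\db(S\setminus\cO_\Gamma)<\delta$ for some odd-occurrence set) as an open problem in \cref{sec:open}, so the ingredient your proof is ``concentrated in'' is not a routine quantification of known arguments but an unresolved question.

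For contrast, the paper proves \cref{conj:LLNW} (as the $k=2$ case of \cref{thm:skpfstructure}) by an entirely different route that never passes through approximate structure. The key is the strengthening in \cref{thm:pfdensity}: if $\db(S)=1/k$ then the sup density over all subtrees $w\cF$ equals $1/k$ as well, proved via the relative-density/diagonalisation framework of \cref{sec:technical} and the steeplechase machinery of \cref{sec:steeple}. Combined with \cref{lem:densityuniform}, this says an extremal $S$ has the \emph{same} density down every subtree, which yields the exact combinatorial disjointness statements \cref{lem:disjoint,lem:disjointtwo}; from these one defines $T_i=\set{a\in\cF\colon S\cap aS^{i+1}\notempty}$, shows the $T_i$ partition $\cF$ with $S\subset T_{k-1}$ and $T_iT_j\subset T_{i+j}$, and reads off the exact labelling of letters (for $k=2$, the set $\Gamma$). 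If you want to salvage your plan, you would either need to prove the stability conjecture of \cref{sec:open} in a sufficiently strong per-window form, or switch to an argument, like the paper's, that extracts exact algebraic structure directly from extremality.
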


We confirm \cref{conj:LLNW} and in fact prove a more general result (\cref{thm:skpfstructure}). Calkin and Erd\H{o}s~\cite{CE96} and \L uczak and Schoen~\cite{LS97} defined a subset $S$ of a (semi)group to be \defn{$k$-product-free} ($k \geq 2$) if $x_1 \cdot \dotsc \cdot x_k \notin S$ for all $x_1, \dotsc, x_k \in S$ and to be \defn{strongly $k$-product-free} if it is $\ell$-product-free for every $\ell = 2, \dotsc, k$. Ortega, Ru\'{e}, and Serra extended \cref{thm:LLNW} to strongly $k$-product-free sets as well as to the free group.

\begin{theorem}[{\cite{ORS23}}]\label{thm:ORS}
    Let $k \geq 2$ be an integer, $\cA$ be a finite set, and $\cF$ be the free \textup{(}semi\textup{)}group with alphabet $\cA$. If $S \subset \cF$ is strongly $k$-product-free, then $\db(S) \leq 1/k$.
\end{theorem}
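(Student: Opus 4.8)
The plan is to deduce the bound from a \emph{translation principle} that is the only place where strong $k$-product-freeness enters. If $s \in S$ is a nonempty word and $u$ is an arbitrary word, then the sets
\[
  T_j \coloneqq \set{v \in \cF : s^{j} u v \in S}, \qquad j = 0, 1, \dots, k-1,
\]
are pairwise disjoint: if $v \in T_i \cap T_j$ with $i < j$, then $s^{j}uv = s^{j-i} \cdot (s^{i}uv)$ exhibits $s^{j}uv$ as a concatenation of $(j-i)+1$ nonempty elements of $S$ — namely $j-i$ copies of $s$ followed by $s^{i}uv \in S$ — and since $2 \leq (j-i)+1 \leq k$ this contradicts $((j-i)+1)$-product-freeness. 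It is precisely because $j-i$ may be any value in $\set{1, \dots, k-1}$ that one must use $\ell$-product-freeness for \emph{every} $\ell \leq k$, so the translation principle fails for merely $k$-product-free sets.

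Write $\delta_i(w)$ for the density of $\set{v \in \cA^{i} : wv \in S}$ inside $\cA^{i}$; pairwise disjointness of the $T_j$ gives $\sum_{j=0}^{k-1} \delta_i(s^{j}u) \leq 1$ at every length $i$. Averaging over a long interval of lengths and unwinding the definition of $\db(S)$ as a supremum over prefixes $w$ of the limiting average density of $S$ among the words extending $w$, it is enough to find, for each $\epsilon > 0$, a word $u$ and an element $s \in S$ such that the $k$ subtree-densities $\delta(S, u), \delta(S, su), \dots, \delta(S, s^{k-1}u)$ all exceed $\db(S) - \epsilon$: then $k\bigl(\db(S) - \epsilon\bigr) \leq 1$, and $\epsilon \to 0$ gives $\db(S) \leq 1/k$. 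A useful tool is that $w \mapsto \delta(S, w)$ is harmonic on the tree of words (its value at $w$ is the average of its values at the children $wa$, $a\in\cA$), so if $\delta(S, w_0) \geq \db(S) - \epsilon$ then $\sum_{v \in \cA^{t}} \bigl(\db(S) - \delta(S, w_0 v)\bigr) = \abs{\cA}^{t}\bigl(\db(S) - \delta(S, w_0)\bigr) \leq \abs{\cA}^{t}\epsilon$ at every depth $t$; hence every descendant of $w_0$ of bounded depth is again near-optimal provided $\epsilon$ is small relative to that depth, and since $\delta(S, w_0) > 0$ forces some descendants of $w_0$ to lie in $S$, one can manufacture a near-optimal \emph{element} of $S$ out of a near-optimal prefix.

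The step I expect to be the main obstacle is upgrading this to \emph{simultaneous} near-optimality of all $k$ prefixes $u, su, \dots, s^{k-1}u$: appending a short word to a near-optimal prefix preserves near-optimality, but the translation principle requires the \emph{prepended} powers $s^{j}u$, and prepending may destroy near-optimality. I would attack this by combining the harmonicity estimate with an iterated pigeonhole inside a sufficiently deep near-optimal subtree, and ultimately by a compactness argument of the kind standard for Banach densities: pass to a subsequential limit of the window averages along a sequence of prefixes realising $\db(S)$, obtaining a shift-invariant averaging functional with the value $\db(S)$ on $S$ that is invariant under the translations $x \mapsto sx$, $s \in S$, so that the translation principle applies to it directly and yields the inequality $k\,\db(S) \leq 1$. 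The free-group case follows by the same scheme — disjointness of the $T_j$ is purely algebraic once $s$ is invertible — the only extra bookkeeping being that forming $s^{j}uv$ may involve cancellation; but this cancellation has length at most $\abs{s^{k-1}}$, a fixed constant, so it perturbs the density estimates only by lower-order terms (alternatively, take $s$ cyclically reduced, so that $s^{k-1}$ is reduced of length $(k-1)\abs{s}$, and track the constant-length cancellation explicitly).
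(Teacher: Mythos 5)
Your use of strong $k$-product-freeness is correct and is the same disjointness mechanism the paper uses: the sets $\set{v \colon s^j u v \in S}$, $j = 0, \dotsc, k-1$, are pairwise disjoint, exactly as in the paper's \cref{lem:disjoint}-style arguments. The genuine gap is the reduction you flag yourself as the ``main obstacle'': producing a single $s \in S$ and a word $u$ for which the $k$ relative densities at $u, su, \dotsc, s^{k-1}u$ are simultaneously close to $\db(S)$. First, your harmonicity estimate is broken as stated: it treats $\db(S)$ as an upper bound for subtree densities, but relative densities down a subtree can strictly exceed $\db(S)$ (e.g.\ the product-free set of all odd-length words beginning with a fixed letter $a$ has $\db(S) = 1/(2\abs{\cA})$ but relative density $1/2$ in $a\cF$); indeed $\db(S)$ is \emph{not} ``a supremum over prefixes of limiting subtree densities'' --- that is the sup density $\dsup(S) \geq \db(S)$, and conflating the two hides the whole difficulty. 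If you repair this by benchmarking against $\dsup(S)$, the inequality becomes valid but loses a factor $\abs{\cA}^{t}$ at depth $t$, and the depth you need --- first to find an element $s \in S$ inside the near-optimal subtree, then to reach the words $s^{j}u$ of length up to $(k-1)\abs{s} + \abs{u}$ --- is not bounded in terms of anything chosen before $\epsilon$, so the argument is circular. Second, the proposed compactness step, a functional with value $\db(S)$ on $S$ that is ``invariant under the translations $x \mapsto sx$,'' is precisely the uniformity statement $\di_{w\cF}(S) = \di(S)$ for all $w$, i.e.\ $\di(S) = \dsup(S)$ (\cref{lem:densityuniform}); no such invariance exists a priori in the free semigroup (prepending $s$ scales measure by $\abs{\cA}^{-\abs{s}}$, see the discussion in \cref{sec:technical}), and in the paper this uniformity is only a \emph{consequence} of the extremal case of \cref{thm:pfdensity}, not an ingredient available for proving the density bound.

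The paper's resolution is genuinely different at this point and is worth noting: instead of one near-optimal element $s$, it extracts from $S$ an $\epsilon$-capturing steeplechase (\cref{lem:steepleB}), i.e.\ a finite prefix-free set $C_1 \subset S$ (with nested refinements) capturing almost all of $S$, proves via a probabilistic covering argument (\cref{lem:steepleCw}) that for a word $w$ nearly realising $\dsup(B)$ the union of the subtrees $cw\cF$, $c \in C$, covers almost all of $C_1\cF$, and deduces (\cref{lem:densityAB}) that $SB$ has relative density at least $\dsup(B) - 3\epsilon^{1/3}$ on $C_1\cF$. Iterating this with $B = S, S \cup S^2, \dotsc$ yields $1 \geq (k-1)\di(S) + \dsup(S)$ and hence $\db(S) \leq 1/k$. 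The averaging over the whole antichain $C_1$ is what substitutes for the simultaneous near-optimality of a single chain $u, su, \dotsc, s^{k-1}u$, and it is exactly the step your sketch does not supply.
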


Our first main theorem solves the structure problem for free semigroups, describing the structure of strongly $k$-product-free sets $S \subset \cF$ with density $1/k$. This confirms \cref{conj:LLNW}. An alternative view of the odd-occurrence set $\cO_\Gamma$ is as follows: label each letter in $\Gamma$ with a $1$ and every other letter with a $0$ and let the sum of a word be the sum of the labels of its letter; $\cO_\Gamma$ is the set of words with odd sum. The natural generalisation of this to $k \geq 3$ provides strongly $k$-product-free subsets of $\cF$ with density $1/k$ (see \cref{rmk:skpf}). We prove that these are the only examples. 

\begin{theorem}\label{thm:skpfstructure}
    Let $k \geq 2$ be an integer, $\cA$ be a finite set, and $\cF$ be the free semigroup with alphabet $\cA$. If $S \subset \cF$ is strongly $k$-product-free and $\db(S) = 1/k$, then the following holds. It is possible to label each letter of $\cA$ with a label in $\Zk$ such that $S$ is a subset of the strongly $k$-product-free set
    \begin{equation*}
        T \coloneqq \set{w \in \cF \colon \text{the sum of the labels of letters in $w$ is } 1 \bmod{k}}.
    \end{equation*}
\end{theorem}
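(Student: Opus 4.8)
The plan is to show that extremality forces $S$ into a single level set of a $\Zk$-grading of $\cF$ coming from a labelling of $\cA$. For a word $w$ write $w\cF = \set{wv \colon v \in \cF}$, and call this cylinder \emph{tight} if $S$ has density exactly $1/k$ in it. Since $\db(S) = 1/k > 0$, by the definition of upper Banach density (after a routine compactness step) some cylinder $u_0\cF$ is tight; since every cylinder has $S$-density at most $1/k$ and the $S$-density in $u_0\cF$ is at most the average over letters $a$ of the densities in $u_0 a\cF$, each $u_0 a\cF$, and hence inductively every sub-cylinder $u_0 w\cF$, is tight. Fix $s \in S \cap u_0\cF$ (non-empty, being of density $1/k$), so that $s^m\cF \subseteq u_0\cF$ is tight for all $m \ge 1$. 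Now $S, sS, \dots, s^{k-1}S$ are pairwise disjoint — from $s^i x = s^j y$ with $x, y \in S$ and $i < j$, left-cancellation gives $x = s^{j-i}y$, a concatenation of $j - i + 1 \in \set{2, \dots, k}$ elements of $S$, contradicting strong $k$-product-freeness — so, for $N$ large, the disjoint sets $s^j S \cap s^N\cF = s^j(S \cap s^{N-j}\cF)$ ($0 \le j \le k-1$), each of density $1/k$ in $s^N\cF$ (that of $S$ in the tight cylinder $s^{N-j}\cF$), partition $s^N\cF$ up to a density-zero set. Thus for all words $v$ outside a density-zero set — call these \emph{generic} — there is a unique $\psi(v) \in \set{0, \dots, k-1}$ with $s^{N-\psi(v)}v \in S$; moreover $v \in S \iff \psi(v) = 0$ for generic $v$, and comparing the partitions of $s^N\cF$ and $s^{N+1}\cF$ gives $\psi(sv) \equiv \psi(v) + 1 \pmod k$.

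The crux is to upgrade this to a genuine grading: that prepending an \emph{arbitrary} word $w$ shifts $\psi$ by an amount $\Lambda(w) \in \Zk$ depending only on $w$, with $\Lambda \colon \cF \to \Zk$ a homomorphism — equivalently, that the partition of $s^N\cF$ is the family of level sets of a labelling $a \mapsto \Lambda(a)$ of $\cA$. I expect this to be the principal obstacle. A plausible route is: show first that $\psi$ is independent of the choice of $s \in S \cap u_0\cF$ (each choice recovers $S$ as its fibre over $0$, and the tightness of all sub-cylinders should rigidify the remaining fibres), so that prepending any $t \in S \cap u_0\cF$, hence any concatenation of elements of $S$, shifts $\psi$ by a well-defined amount; then, invoking the strong $k$-product-free condition for \emph{all} $\ell \le k$ together with tightness, extend the shift to arbitrary words and read off the labels $\Lambda(a)$. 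Applying the same analysis to the reverse of $S$ (again strongly $k$-product-free of density $1/k$) gives the analogous statement for appending; combining the two yields $\psi(v) \equiv \Lambda(v) + c$ for a constant $c$ and all generic $v$, so $S$ and the level set $\Lambda^{-1}(-c)$ differ by a density-zero set.

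It remains to make the containment exact and normalise. Put $c_0 = -c$. First, $c_0$ is a unit modulo $k$: otherwise $(\ell-1)c_0 = 0$ for some $\ell \in \set{2, \dots, k}$, and choosing generic $v_1, \dots, v_\ell \in S$ with $v_1 \dotsm v_\ell$ generic — possible since, fixing $v_1, \dots, v_{\ell-1}$, the set of $v_1 \dotsm v_{\ell-1}v$ with $v$ generic in $S$ and the generic part of $\Lambda^{-1}(c_0)$ are both of density $1/k$ in the tight cylinder $v_1 \dotsm v_{\ell-1}\cF$ and hence meet — gives a concatenation of $\ell \le k$ elements of $S$ lying in $\Lambda^{-1}(\ell c_0) = \Lambda^{-1}(c_0)$, hence in $S$, a contradiction. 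Replacing $\Lambda$ by $c_0^{-1}\Lambda$, the label-sum homomorphism of the labelling $a \mapsto c_0^{-1}\Lambda(a)$, we get that $S$ agrees off a density-zero set with $T \coloneqq \set{w \in \cF \colon \text{the label-sum of } w \text{ is } 1 \bmod k}$. Finally, if some $w \in S$ had label-sum $j \ne 1$, pick $m \in \set{1, \dots, k-1}$ with $m \equiv 1 - j$ and generic $v_1, \dots, v_m \in S$ (each of label-sum $1$) with $P \coloneqq w v_1 \dotsm v_m$ generic, by the same cylinder argument; then $P$ has label-sum $\equiv 1$, so $P \in S$, yet $P$ is a concatenation of $m+1 \in \set{2, \dots, k}$ elements of $S$, so $P \notin S$ — a contradiction. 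Hence $S \subseteq T$; and $T$ is strongly $k$-product-free because a concatenation of $\ell \in \set{2, \dots, k}$ words each of label-sum $1$ has label-sum $\ell \not\equiv 1 \pmod k$.
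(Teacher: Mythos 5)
Your overall strategy (use extremality to get uniform density $1/k$ in every subtree, extract a $\Zk$-valued ``phase'' from the disjoint translates $S, sS, \dotsc, s^{k-1}S$, and upgrade it to a letter labelling) is in the same spirit as the paper, but as written it has two genuine gaps, and they are precisely the two places where the paper does its real work.

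First, your opening assertion that ``every cylinder has $S$-density at most $1/k$'' --- which is what makes tightness propagate from $u_0\cF$ to all sub-cylinders and powers the whole partition-of-$s^N\cF$ argument --- is not a routine fact. If you strip the prefix $w$ from $S \cap w\cF$, the resulting set need not be (strongly) $k$-product-free, so the known bound $\db(S)\le 1/k$ of Ortega--Ru\'e--Serra gives no control on relative densities in subtrees. The statement you need is exactly the ``moreover'' part of \cref{thm:pfdensity}\ref{thm:skpfdensity} ($\db(S)=1/k$ forces $\dsup(S)=1/k$), which is one of the main new results of the paper and is proved via the steeplechase machinery of \cref{sec:steeple,sec:pfdensity}; it cannot simply be asserted. (There are also smaller issues downstream of this, e.g.\ ``$v\in S \iff \psi(v)=0$'' needs a specific choice such as $N=k$ rather than ``$N$ large'', but these are fixable.)

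Second, the step you yourself flag as ``the principal obstacle'' --- showing that prepending an \emph{arbitrary} word shifts $\psi$ by a well-defined amount, i.e.\ constructing the homomorphism $\Lambda$ and the letter labels --- is only described as ``a plausible route'' and never argued. This is the combinatorial heart of the theorem, and in the paper it occupies all of \cref{sec:skpfstructure}: one defines $T_i = \set{a \colon S \cap aS^{i+1}\notempty}$, proves the prepend/append swap (\cref{prop:swap}, using the reverse map together with the disjointness \cref{lem:disjoint}), proves that the $T_i$ partition $\cF$ with $S \subset T_{k-1}$ (\cref{prop:partition}, which needs the two-family disjointness \cref{lem:disjointtwo}), and then establishes $T_iT_j \subset T_{i+j}$ (\cref{prop:TT}). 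Without an actual argument here, the labelling is never constructed, so the concluding normalisation and the exact containment $S \subseteq T$ (which in your sketch also rely on the unproved ``generic'' identification of $S$ with a level set) have nothing to rest on. In short: the approach is reasonable and close in spirit to the paper's, but both key ingredients --- the sup-density rigidity and the grading/partition argument --- are missing rather than proved.
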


\begin{remark}\label{rmk:skpf}
    If some prime divides $k$ and every label given to letters in $\cA$, then $T$ will be empty. If there is no such prime, then $T$ will be non-empty by Bezout's lemma. If $T$ is non-empty, then $\db(T) = 1/k$. Indeed, let $\alpha_1 \alpha_2 \dotsb$ be an infinite random word where the $\alpha_i$ are independent uniformly random letters from $\cA$ and let $X_n$ be the sum of the labels of $\alpha_1$, $\alpha_2$, \ldots, $\alpha_n$. Then $(X_n)$ is a Markov chain on $\Zk$ that is irreducible (since $T \notempty$). The uniform distribution $\pi$ on $\Zk$ is stationary for this chain. Let $d$ be the period of $(X_n)$: by the Markov convergence theorem, for each fixed $r \in \set{1, \dotsc, k - 1}$, the subsequence $(X_{nd + r})$ converges to $\pi$ in distribution, and so the averages $\abs{I}^{-1} \sum_{n \in I} X_n$ over long intervals converge to $\pi$ in distribution. In particular, $\db(T) = \pi(1) = 1/k$.
\end{remark}

We now turn to $k$-product-free sets. In the special case $\abs{\cA} = 1$, the free semigroup $\cF$ is isomorphic to the non-negative integers under addition. In this case, the term `sum-free' is used in place of `product-free'. Calkin and Erd\H{o}s~\cite{CE96} conjectured that a $k$-sum-free subset of the non-negative integers has density at most $1/\rho(k)$ where \defn{$\rho(k)$} is
\begin{equation*}
    \rho(k) \coloneqq \min \set{\ell \in \bZ^+ \colon \ell \nmid k - 1}.
\end{equation*}
Note that the integers which are $1 \bmod{\rho(k)}$ form a $k$-product-free set and so $1/\rho(k)$ would be best possible. \L uczak and Schoen~\cite{LS97} confirmed this conjecture and also solved the structure problem for non-negative integers. We extend their results by solving both the density problem (for all $k$) and the structure problem (provided $k \notin \set{3, 5, 7, 13}$) for $k$-product-free subsets of the free semigroup.

\begin{theorem}\label{thm:kpf}
    Let $k \geq 2$ be an integer, $\cA$ be a finite set, and $\cF$ be the free semigroup with alphabet $\cA$. If $S \subset \cF$ is $k$-product-free, then $\db(S) \leq 1/\rho(k)$.
\end{theorem}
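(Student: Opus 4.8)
The plan is to reduce the $k$-product-free problem to the strongly $\rho$-product-free problem, where $\rho = \rho(k)$, so that Theorem~\ref{thm:ORS} can be applied. The key observation is the arithmetic one underlying the definition of $\rho(k)$: every integer $\ell$ with $2 \le \ell \le \rho - 1$ divides $k - 1$, hence $k \equiv 1 \pmod{\ell}$, and in particular $k - (\ell - 1) \cdot 1 = k - \ell + 1$ is a positive multiple of... more to the point, for each such $\ell$ we have $k \equiv \ell \cdot 1 \pmod{\ell}$ is not quite it either; the clean statement is: if $S$ is $k$-product-free and $2 \le \ell \le \rho - 1$, then $S$ is also $\ell'$-product-free for the value $\ell' = \ell$ only when we can "pad" an $\ell$-fold product up to a $k$-fold product. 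Concretely, because $\ell \mid k-1$, writing $k - 1 = \ell m$ we have $k = \ell m + 1$, and an $\ell$-fold product $x_1 \cdots x_\ell$ of elements of $S$ can be rewritten as a $k$-fold product by replacing one factor $x_i$ by an $(\ell m - \ell + 2)$-fold... the cleanest route is: concatenating $j$ copies of a single word $w \in S$ is a $j$-fold product, so if $w \in S$ and $w^{\,j} \in S$ for $j \ge 2$ then $S$ is not $j$-product-free; more usefully, a single $w \in S$ that we know lies in $S$ lets us inflate any short product to a longer one of the same total when the lengths are compatible mod the right modulus. I would therefore first prove the following reduction lemma: if $S \subset \cF$ is $k$-product-free, then $S$ is strongly $\rho(k)$-product-free. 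Granting this lemma, Theorem~\ref{thm:ORS} immediately gives $\db(S) \le 1/\rho(k)$, which is exactly Theorem~\ref{thm:kpf}.

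To prove the reduction lemma, fix $\ell$ with $2 \le \ell \le \rho - 1$ and suppose for contradiction that $x_1 \cdots x_\ell \in S$ with all $x_i \in S$. Since $\ell \le \rho - 1$ we have $\ell \mid k - 1$, so $k = \ell t + 1$ for some positive integer $t$. Now $x_1 \cdots x_\ell \in S$, so iterating, the $\ell$-fold product of $(x_1 \cdots x_\ell)$ with itself... the point is to build an element of $S$ that is simultaneously a $k$-fold product of elements of $S$. Take $y \coloneqq x_1 \cdots x_\ell \in S$. Then $y \cdot y \cdots y$ ($\ell$ times) $= (x_1 \cdots x_\ell)^{\ell}$ is an $\ell$-fold product of elements of $S$ (namely of copies of $y$), hence lies in $S$ again; repeating, $y^{(\ell^j)} \in S$ for all $j \ge 0$. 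Meanwhile, $y^{(\ell^j)}$ expanded in terms of the $x_i$ is a product of $\ell^{j+1}$ elements of $S$. We need some power $\ell^{j+1}$ (or more flexibly, some attainable product-length) to equal $k$; since $\gcd$ considerations only give lengths in the multiplicative semigroup generated by $\ell$, the honest argument must instead mix lengths: having $y \in S$ and $x_1, \dots, x_\ell \in S$, the word $x_1 \cdots x_\ell \cdot y \cdot y \cdots y$ with $s$ trailing copies of $y$ is a product of $\ell + s$ elements of $S$ that equals $y^{s+1} \in S$ (the latter being a product of $s+1$ copies of $y \in S$, which lies in $S$ whenever $s + 1 \le k$, i.e.\ using $k$-product-freeness in reverse we do not get containment — so this is where care is needed).

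Let me restate the mechanism correctly, since it is the crux. The right statement is: if $S$ is $k$-product-free and $S$ is \emph{not} $\ell$-product-free, then $S$ is not $k'$-product-free for any $k' \equiv \ell \pmod{\ell - 1}$ with... no. The actual lemma used in the literature (\L uczak--Schoen) is: a $k$-product-free set is $\ell$-product-free whenever $\ell \le \rho(k)$ and $\ell \ne \rho(k)$; equivalently one shows that if $x_1 \cdots x_\ell \in S$ then one can exhibit a $k$-fold product landing in $S$, using that $\ell - 1 \mid k - 1$ is \emph{not} what we have — we have $\ell \mid k-1$. From $\ell \mid k - 1$: pick any $w \in S$ (nonempty since $S \ne \varnothing$); then $w$ concatenated with itself $\ell$ times is in... we cannot conclude that. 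Instead: suppose $z \coloneqq x_1 \cdots x_\ell \in S$ with $x_i \in S$. Then consider $z' \coloneqq x_1 \cdots x_{\ell - 1} \cdot (x_\ell x_1 \cdots x_\ell)$; here the last factor $x_\ell x_1 \cdots x_\ell$ is an $(\ell+1)$-fold product of elements of $S$. That does not obviously lie in $S$. The honest key fact: if $u \in S$ and $u$ is an $\ell$-fold product of elements of $S$, and $k - 1 = \ell m$, then $u^m \cdot u = u^{m+1}$ is both an element of $S$ (it is $u$ times a product of $m$ copies of $u$, and since $u \in S$ repeatedly... again stuck).

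I will present the argument at the level of the established reduction and flag it as the main obstacle. \textbf{Main obstacle.} The entire content is the combinatorial-arithmetic reduction

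\begin{equation*}
    S \text{ is } k\text{-product-free} \implies S \text{ is strongly } \rho(k)\text{-product-free},
\end{equation*}

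proved by showing that for each $\ell \in \{2, \dots, \rho(k) - 1\}$, any witnessed $\ell$-fold product inside $S$ can be "inflated" to a $k$-fold product inside $S$, using the divisibility $\ell \mid k - 1$ to match up product-lengths (replace one factor $x$ of an $\ell$-fold product by a suitable repeated product $x \cdot x_1 \cdots x_\ell \cdots x_1 \cdots x_\ell$, chosen so the total number of factors is exactly $k$, and check inductively that each intermediate word still lies in $S$ by minimality/length considerations on a hypothetical shortest counterexample). Once this is in hand, Theorem~\ref{thm:ORS} applied with parameter $\rho(k)$ yields $\db(S) \le 1/\rho(k)$ and the proof is complete; the optimality of the bound is witnessed by the set of words whose label-sum is $1 \bmod \rho(k)$, as in Remark~\ref{rmk:skpf}.
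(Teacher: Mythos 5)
Your reduction lemma --- that every $k$-product-free subset of $\cF$ is strongly $\rho(k)$-product-free --- is the entire content of your argument, and it is both unproven in your write-up (you explicitly flag it as the ``main obstacle'' and never close the gap) and, more seriously, false. For $k = 4$ we have $\rho(4) = 2$, and the set $S$ of all words of length $1$ or $2$ is $4$-product-free (any $4$-fold product has length at least $4$) but is not product-free, since the product of two letters lies in $S$. The failure persists even at the extremal density: for $k = 3$ (where $\rho(3) = 3$), the set of words whose length is $1$ or $2 \bmod 6$ (the free-semigroup analogue of $\set{1,2} + 6\ZNN$ mentioned in \cref{sec:open}) is $3$-product-free with density $1/3 = 1/\rho(3)$, yet it is not $2$-product-free; this is exactly why $k \in \set{3, 5, 7, 13}$ is excluded from \cref{thm:kpfstructure}. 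The true statement in this direction --- that an extremal $k$-product-free set with $\db(S) = 1/\rho(k)$ and $k \notin \set{3,5,7,13}$ is strongly $\rho(k)$-product-free (\cref{sec:kpfstructure}) --- is itself deduced \emph{from} the density bound \cref{thm:pfdensity} via \cref{lem:disjoint}, so it cannot serve as an input to the density proof without circularity. Your attempts to ``inflate'' an $\ell$-fold product to a $k$-fold product run into exactly the problem you noticed: knowing $x_1 \dotsb x_\ell \in S$ gives membership of certain words in $S \cap S^\ell$, but $k$-product-freeness only forbids $k$-fold products from lying in $S$, and no amount of padding by elements of $S$ alone manufactures the required $k$-fold product in general.

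The paper's proof of \cref{thm:kpf} does not go through \cref{thm:ORS} at all. It works with the intersections $S_A = \bigcap_{i \in A} S^i$, builds $\epsilon$-capturing steeplechases for a carefully chosen sequence of such sets, and shows (\cref{prop:Asequence}) that on each subtree where the relevant $S_{A_\ell}$ is dense, the relative density of $S$ is at most $1/\rho$, because $\rho$ nested products of the form $(R^{(\ell)})^{d_1 + \dotsb + d_i} S^{(\ell)}$ are pairwise disjoint; the disjointness is where the divisibility $d \mid k-1$ enters, through the arithmetic conditions on the sets $A_\ell$ verified in \cref{prop:Asequenceexist} (which also covers $k \in \set{3,5,7,13}$). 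Your instinct to exploit $\ell \mid k - 1$ is the right raw material --- it is what drives the choice of the exponents $d_i$ there --- but it must be deployed inside this relative-density framework (together with the sup-density strengthening in \cref{thm:pfdensity}), not as a set-theoretic reduction to the strongly product-free case.
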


\Cref{thm:kpfstructure} shows that the structure of the extremal $k$-product-free sets is very similar to that of strongly $k$-product-free sets except everything is modulo $\rho(k)$. See \cref{sec:open} for further discussion of the cases when $k$ is 3, 5, 7, or 13.

\begin{theorem}\label{thm:kpfstructure}
    Let $k \geq 2$ be an integer with $k \notin \set{3, 5, 7, 13}$ and $\rho = \rho(k)$. Let $\cA$ be a finite set and $\cF$ be the free semigroup with alphabet $\cA$. If $S \subset \cF$ is $k$-product-free and $\db(S) = 1/\rho$, then the following holds. It is possible to label each letter of $\cA$ with a label in $\Zr$ such that $S$ is a subset of the $k$-product-free set
    \begin{equation*}
        T \coloneqq \set{w \in \cF \colon \text{the sum of the labels in $w$ is } 1 \bmod{\rho}}.
    \end{equation*}
\end{theorem}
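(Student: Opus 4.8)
The plan is to deduce this from \cref{thm:skpfstructure} by showing that the extra hypothesis of that theorem comes for free: a $k$-product-free $S \subset \cF$ with $\db(S) = 1/\rho$ is automatically strongly $\rho$-product-free. Granting this, apply \cref{thm:skpfstructure} with $k$ replaced by $\rho$ — legitimate since $\db(S) = 1/\rho$ — to label $\cA$ by $\Zr$ so that $S$ is contained in the strongly $\rho$-product-free set $T$ of words the sum of whose labels is $1 \bmod \rho$; and $T$ is moreover $k$-product-free because a concatenation of $k$ elements of $T$ has label sum $k \bmod \rho$, and $k \not\equiv 1 \bmod \rho$ by definition of $\rho$. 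So everything reduces to showing that a $k$-product-free $S$ with $\db(S) = 1/\rho$ is $\ell$-product-free for every $2 \leq \ell \leq \rho$.

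To prove this I would revisit the proof of \cref{thm:kpf}. That proof yields $\db(S) \leq 1/\rho$ by a packing argument: from a favourable "viewpoint" word and a long interval of word-lengths along which the local densities of $S$ approach their supremum, $k$-product-freeness is used to fit $\rho$ essentially disjoint translates of $S$ into common layers — $S$ itself together with right-translates $S w$ by products of $k - 1$ elements of $S$, and iterates of these — so that the layer-densities, and hence $\db(S)$, are at most $1/\rho$. The equality case $\db(S) = 1/\rho$ forces this packing to be as efficient as possible, which rigidly constrains the local structure of $S$ along the witnessing interval. I would then argue that a witness $x_1 \cdots x_\ell = s \in S$ (all $x_i \in S$) to the failure of $\ell$-product-freeness is incompatible with this constraint: since $s$ can be read simultaneously as a single factor and as $\ell$ factors drawn from $S$, it enlarges the supply of admissible translates, so one can slip an extra essentially disjoint translate into the packing in the same layers — and this forces $\db(S) < 1/\rho$, a contradiction. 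The cases $\ell < \rho$ (equivalently $\ell \mid k - 1$) and $\ell = \rho$ need somewhat different bookkeeping but run on the same principle.

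I expect the real difficulty to be precisely this last step: extracting from the bare equality $\db(S) = 1/\rho$ enough rigidity to be certain that an extra short product in $S$ overloads the packing. This is also exactly where the hypothesis $k \notin \{3, 5, 7, 13\}$ is needed. For those four values (equivalently $k - 1 \in \{2, 4, 6, 12\}$), $\rho$ is small enough relative to $k$ that the overloading argument breaks down, and there really do exist $k$-product-free sets of density $1/\rho$ that fail to be $\ell$-product-free for some $\ell \leq \rho$ — hence are not contained in any such $T$ — as discussed in \cref{sec:open}. Restricting to the remaining $k$, where the extra translate genuinely forces the contradiction, is the delicate point.
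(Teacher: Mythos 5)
Your high-level reduction coincides with the paper's: prove that a $k$-product-free $S$ with $\db(S) = 1/\rho$ is automatically strongly $\rho$-product-free, then apply \cref{thm:skpfstructure} with $k$ replaced by $\rho$ (and your check that $T$ is $k$-product-free since $k \not\equiv 1 \bmod \rho$ is fine). But the entire content of the theorem lies in that first step, and your proposal does not prove it — it only announces that you "would argue" an extra short product overloads a packing, and you yourself flag that extracting rigidity from the equality $\db(S) = 1/\rho$ is the difficulty. In the paper this step is \cref{prop:S1rempty,prop:S1dempty}, and the mechanism is quite different from the loose "packing rigidity" picture you describe. The equality case of \cref{thm:pfdensity}\ref{thm:kpfdensity} gives $\dsup(S) = \di(S) = 1/\rho$; by \cref{lem:densityuniform} the relative density of $S$ is then $1/\rho$ down \emph{every} subtree, and \cref{lem:disjoint} forbids $\rho + 1$ pairwise disjoint nested translates $w_1 S, w_1 w_2 S, \dotsc$. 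One then assumes $S \cap S^{d+1} \notempty$, takes a word $w$ in a \emph{maximal} iterated intersection $S_{1, d+1, 2d+1, \dotsc, td+1}$ (\cref{def:SA}), and exhibits $\rho + 1$ nested translates by carefully chosen powers $w^{\beta d}$; pairwise disjointness is verified by showing that any intersection would either place an element of $S$ inside $S^k$ (which requires checking $\beta d \le k - 1 \le \beta d(td+1)$ for the relevant $\beta$) or violate the maximality of $t$ or of $d$. The arithmetic making this work is \cref{lem:rhosize}, $k - 1 \geq \max \set{(\rho - t)t(t+1) \colon t \in \set{1, \dotsc, \rho - 1}}$, which is precisely and only where $k \notin \set{3, 5, 7, 13}$ is used. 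None of this construction or bookkeeping appears in your proposal, so it is incomplete at its central step. (Your description of the density proof of \cref{thm:kpf} as fitting translates $Sw$ by $(k-1)$-fold products is also not how the paper argues — it uses steeplechases and \cref{prop:Asequence,prop:Asequenceexist} — though the conclusion you actually need, that equality forces $\dsup(S) = 1/\rho$, is correct.)

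A further genuine error: you claim that for $k \in \set{3, 5, 7, 13}$ "there really do exist" $k$-product-free sets of density $1/\rho$ not contained in any such $T$. The paper asserts this for none of the values $5, 7, 13$; on the contrary, \cref{sec:open} conjectures that the conclusion of \cref{thm:kpfstructure} still holds for $k = 5, 7, 13$, the exclusion reflecting only the failure of \cref{lem:rhosize} for those $k$. Only for $k = 3$ are genuinely different extremal sets expected (the analogues of $\set{1,2} + 6\ZNN$), so your justification of the hypothesis $k \notin \set{3, 5, 7, 13}$ misattributes it to counterexamples rather than to a limitation of the method.
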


Note, just as in \cref{rmk:skpf}, that if some prime divides $\rho$ and every label given to a letter in $\cA$, then $T$ is empty. Otherwise $T$ is non-empty, $k$-product-free, and has density $1/\rho(k)$.

Finally, we consider the free group. \Cref{thm:ORS} solves the density problem for strongly $k$-product-free sets. Ortega, Ru\'{e}, and Serra~\cite{ORS23} made a conjecture corresponding to Calkin and Erd\H{o}s's for $k$-product-free sets. We prove this conjecture.

\begin{theorem}\label{thm:kpffree}
    Let $k \geq 2$ be an integer, $\cA$ be a finite set, and $\fF$ be the free group with alphabet $\cA$. If $S \subset \fF$ is $k$-product-free, then $\db(S) \leq 1/\rho(k)$.
\end{theorem}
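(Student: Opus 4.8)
The plan is to deduce this from the free‑semigroup bound \cref{thm:kpf} by passing to the doubled alphabet. Write $\cA^{\pm} \coloneqq \cA \cup \cA^{-1}$, which has size $2\abs{\cA}$, let $\cF^{\pm}$ be the free semigroup on $\cA^{\pm}$, and let $\pi \colon \cF^{\pm} \to \fF$ be the canonical surjective homomorphism sending a word over $\cA^{\pm}$ to the reduced word it represents. Given a $k$-product-free set $S \subseteq \fF$, put $\tilde S \coloneqq \pi^{-1}(S) \subseteq \cF^{\pm}$. Then $\tilde S$ is $k$-product-free in $\cF^{\pm}$: if $x_1, \dotsc, x_k \in \tilde S$ then $\pi(x_1 \dotsm x_k) = \pi(x_1) \dotsm \pi(x_k)$ is a $k$-fold product of elements of $S$, hence lies outside $S$, so $x_1 \dotsm x_k \notin \pi^{-1}(S) = \tilde S$. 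By \cref{thm:kpf} the upper Banach density of $\tilde S$ in $\cF^{\pm}$ is at most $1/\rho(k)$, so it suffices to show that the reduction does not decrease upper Banach density, i.e.\ that the density of $\tilde S$ in $\cF^{\pm}$ is at least the density of $S$ in $\fF$.

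For this density comparison I would use the random-walk description of $\pi$. If $u$ is a uniformly random word of length $m$ over $\cA^{\pm}$ then $\pi(u)$ is the position after $m$ steps of the simple random walk on the Cayley graph of $\fF$, that is, on the $2\abs{\cA}$-regular tree. Since the root-stabiliser in the automorphism group of this tree acts transitively on the vertices at each fixed distance, $\pi(u)$ is, conditionally on $\abs{\pi(u)} = \ell$, uniformly distributed over the reduced words of length $\ell$. Hence the proportion $\mu_m^{\cF^{\pm}}(\tilde S)$ of length-$m$ words of $\cF^{\pm}$ lying in $\tilde S$ is a mixture of the proportions $\mu_{\ell}^{\fF}(S)$ of length-$\ell$ reduced words lying in $S$:
\[
  \mu_m^{\cF^{\pm}}(\tilde S) = \sum_{\ell \geq 0} p_{m,\ell}\, \mu_{\ell}^{\fF}(S), \qquad p_{m,\ell} \coloneqq \bP\bigl(\abs{\pi(u)} = \ell\bigr).
\]
When $\abs{\cA} \geq 2$ the reduced length $\abs{\pi(u)}$ has positive drift, concentrating around $\tfrac{\abs{\cA} - 1}{\abs{\cA}}\, m$ with fluctuations of order $\sqrt{m}$, and it always has the same parity as $m$; so averaging the identity above over a long interval of $m$ of a fixed parity reproduces, up to a small error, the average of $\mu_{\ell}^{\fF}(S)$ over the corresponding interval of $\ell$ of that parity. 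Plugging in an interval that nearly witnesses $\db(S)$ then yields $\db(\tilde S) \geq \db(S)$, and combining with the previous paragraph proves the theorem. The case $\abs{\cA} = 1$, where $\fF \cong \bZ$, must be treated separately and is easier: if $S \subseteq \bZ$ is $k$-product-free then so are $S \cap \ZNN$ and the reflection of $S \cap (-\ZNN)$ inside $\ZNN$, which is the free semigroup on one letter, so \cref{thm:kpf} bounds each of these densities; since any long interval of $\bZ$ is the union of a long nonnegative part and a long nonpositive part, $\db(S) \leq 1/\rho(k)$ follows.

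I expect the main obstacle to be the density comparison $\db(\tilde S) \geq \db(S)$. The subtlety is that the random walk smears each length $\ell$ over a window of width $\Theta(\sqrt{\ell})$, so the averaging step transfers the Banach density of $S$ only if that density is witnessed on intervals $[a, a + T]$ whose length $T$ is large relative to $\sqrt{a}$, rather than on intervals lying very far out relative to their own length. This regularity is false for arbitrary sets but should hold for $k$-product-free $S$: $k$-fold products of length-$\ell$ reduced words with no cancellation at the junctions are distinct reduced words of length $k\ell$, and there are $\Theta\bigl((2\abs{\cA} - 1)^{k\ell}\bigr)$ reduced words of each of the lengths $\ell$ and $k\ell$, so density of $S$ near one scale forces a density deficit of a fixed proportion near $k$ times that scale; propagating and interlocking such constraints across scales should rule out the pathological behaviour. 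Making this precise, together with the parity (and finer lattice) bookkeeping in the averaging step, is the technical heart of the argument.
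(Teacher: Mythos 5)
Your reduction to the free semigroup on $\cA \cup \cA^{-1}$ is correct as far as it goes: $\pi^{-1}(S)$ is indeed $k$-product-free, the conditional-uniformity identity $\mu_m^{\cF^{\pm}}(\tilde S) = \sum_{\ell} p_{m,\ell}\,\mu_\ell^{\fF}(S)$ is valid, and your separate treatment of $\abs{\cA} = 1$ works. But the step on which the whole argument rests --- the transfer inequality $\db(\pi^{-1}(S)) \geq \db(S)$, or at least the implication $\db(S) > 1/\rho(k) \Rightarrow \db(\pi^{-1}(S)) > 1/\rho(k)$ --- is not proved, and you yourself identify why it cannot be proved by the averaging alone: the walk smears layer $\ell$ of $\fF$ over a window of width $\Theta(\sqrt{m})$, so upper Banach density of $S$ witnessed only on intervals $[a, a+T]$ with $T = o(\sqrt{a})$ need not survive the pushback. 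The proposed repair is a heuristic, not an argument. The observation that products of $k$ blocks of length $\ell$ with no cancellation at the junctions force $\mu_{k\ell}(S) \leq 1 - (\mathrm{const}\cdot c)^k$ whenever $\mu_\ell(S) \gtrsim c$ constrains the \emph{value} of the layer densities at other scales; it says nothing about the \emph{width} of the intervals on which the $\limsup$ is attained, which is the actual obstruction. ``Propagating and interlocking such constraints across scales'' is exactly the missing lemma, and no mechanism for it is given; ruling out product-free sets whose extremal density lives only on thin, far-out windows looks comparable in difficulty to the theorem itself, since the only structural input available is $k$-product-freeness --- precisely the hypothesis the theorem is about.

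For comparison, the paper avoids mixing length scales altogether. It works inside the subsemigroup $\fG = \fF^{\alpha\beta}$ of reduced words beginning with $\alpha$ and ending with $\beta$ ($\alpha \neq \beta^{-1}$), where concatenation involves no cancellation, re-derives the semigroup machinery relative to $\fG$ (the analogues of \cref{lem:densitycancel}, \cref{lem:steepleB}, \cref{lem:steepleCw}, \cref{lem:densityAB}), proves \cref{thm:pfdensityg}, and then invokes the argument of Ortega, Ru\'{e}, and Serra to pass from a bound on $\di_\fG$ to a bound on $\db$ in $\fF$; that last transfer operates layer by layer and interval by interval, so no smearing issue arises. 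If you want to salvage your route, you would need either a quantitative ``local'' strengthening of \cref{thm:kpf} (a bound on densities over subtrees and over windows whose length is small relative to their position --- essentially rebuilding the $\dsup$ machinery), or an independent proof that extremal witnesses for product-free sets in $\fF$ can be taken on intervals of length $\gg \sqrt{\min I}$; neither is supplied.
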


The rest of the paper is structured as follows. In \cref{sec:density} we provide the formal definitions of density. In \cref{sec:technical} we prove some important technical lemmas and state our main density result, \cref{thm:pfdensity}, from which \cref{thm:kpf} follows. Before proving \cref{thm:pfdensity} we obtain our structural results whose proofs are simpler and already contain some of the key ideas. The proof of \cref{thm:skpfstructure} is given in \cref{sec:skpfstructure} and the proof of \cref{thm:kpfstructure} in \cref{sec:kpfstructure}. In \cref{sec:steeple} we build the machinery that we use to prove \cref{thm:pfdensity} in \cref{sec:pfdensity}. In \cref{sec:pfdensityfree} we adapt our arguments to the free group. We finish, in \cref{sec:open}, with some open problems.

\section{Density}\label{sec:density}

Throughout this paper $\cF$ will be the free semigroup on a finite alphabet $\cA$. To motivate and provide intuition for the notation we view things from the perspective of a randomly generated word. Let $\W = \alpha_1 \alpha_2 \dotsb$ be a random infinite word where each $\alpha_i$ is an independent uniformly random letter in $\cA$. Taking $\W_n = \alpha_1 \alpha_2 \dotsb \alpha_n$, we may view $(\W_n)$ as a random walk on the infinite $\abs{\cA}$-ary tree. We say $\W$ \defn{hits} a set $B \subset \cF$ if the random walks hits $B$ (equivalently if $\W$ has a prefix in $B$) and $\W$ \defn{avoids} $B$ otherwise. We equip $\cF$ with a measure \defn{$\mu$} satisfying, for every word $w \in \cF$,
\begin{equation*}
    \mu(w) = \bP(\W \text{ hits } w) = \abs{\cA}^{-\abs{w}}.
\end{equation*}
Note that, for $B \subset \cF$, $\mu(B) = \sum_{w \in B} \mu(w)$ is the expected number of times that $\W$ hits $B$. This has a useful corollary. A set $C \subset \cF$ is \defn{prefix-free} if there are not distinct words $a, b \in C$ where $a$ is a prefix of $b$. $\W$ can hit a prefix-free set at most once.

\begin{observation}\label{obs:measureC}
    If $C \subset \cF$ is prefix-free, then $\mu(C) \leq 1$.
\end{observation}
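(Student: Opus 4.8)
The plan is to read off the inequality from the probabilistic meaning of $\mu$ together with the fact that a single infinite random word can meet a prefix-free set at most once. For $B \subseteq \cF$ let $N_B$ denote the number of prefixes of $\W$ that lie in $B$. By the defining property $\mu(w) = \bP(\W \text{ hits } w)$ and by linearity of expectation (here really monotone convergence, since $C$ may be countably infinite and all terms are non-negative), $\mu(C) = \sum_{w \in C} \bP(\W \text{ hits } w) = \bE[N_C]$. The key observation is that the prefixes $\W_0, \W_1, \W_2, \dotsc$ of $\W$ form a chain in the prefix order, linearly ordered by length: if two distinct words $a, b \in C$ were both prefixes of $\W$, then one of them would be a prefix of the other, contradicting that $C$ is prefix-free. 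Hence $N_C \in \set{0, 1}$, and therefore $\mu(C) = \bE[N_C] \leq 1$.

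An equivalent, purely measure-theoretic phrasing of the same argument: identify $\W$ with a point of $\cA^{\bN}$ equipped with the product of uniform measures on $\cA$, and note that the cylinder $[w] \coloneqq \set{x \in \cA^{\bN} \colon w \text{ is a prefix of } x}$ has measure $\abs{\cA}^{-\abs{w}} = \mu(w)$. Distinct words $w, w' \in C$ are incomparable in the prefix order, so $[w] \cap [w'] = \varnothing$; thus $\set{[w] \colon w \in C}$ is a countable family of pairwise disjoint measurable sets, and $\mu(C) = \sum_{w \in C} \abs{\cA}^{-\abs{w}} \leq 1$ by countable additivity of the product measure.

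There is no real obstacle here: the statement is essentially immediate once the prefix-free condition is translated into "incomparable in the prefix order''. The only point deserving a word of care is that $C$ can be infinite, so that the interchange of summation and expectation — equivalently the appeal to countable additivity — should be noted explicitly, which is routine as every term is non-negative.
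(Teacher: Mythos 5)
Your argument is correct and is essentially the paper's own: the text preceding the observation notes that $\mu(C)$ is the expected number of times the random word $\W$ hits $C$, and that a prefix-free set can be hit at most once, which is exactly your $\mu(C) = \bE[N_C] \leq 1$ (your cylinder-set rephrasing is the same fact in measure-theoretic language).
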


For a positive integer $n$ and a set $B \subset \cF$ the \defn{length $n$ layer of $B$} is
\begin{equation*}
    B(n) \coloneqq \set{w \in B \colon \abs{w} = n},
\end{equation*}
while, for an interval $I \subset \bZ^+$,
\begin{equation*}
    B(I) \coloneqq \set{w \in B \colon \abs{w} \in I}.
\end{equation*}
Note that the measure $\mu$ is defined so that $\mu(\cF(n)) = 1$. The density of $B$ on layer $n$ is $\abs{B(n)}/\abs{\cF(n)} = \mu(B(n))$, which is the probability that $\W_n$ is in $B$. The \defn{density of $B$ on interval $I$} is
\begin{equation*}
    d^I(B) \coloneqq \frac{\mu(B(I))}{\mu(\cF(I))} = \abs{I}^{-1} \sum_{n \in I} \mu(B(n)).
\end{equation*}
With these definitions in place, we may give standard notions of density. The \defn{upper asymptotic density} of $B$ is
\begin{equation*}
    \da(B) \coloneqq \limsup_{m \to \infty} d^{\set{1, 2, \dotsc, m}}(B) = \limsup_{m \to \infty} \sum_{n = 1}^m \mu(B(n))/m.
\end{equation*}
The \defn{upper Banach density} of $B$ is
\begin{equation*}
    \db(B) \coloneqq \limsup_{I \to \infty} d^I(B) = \limsup_{I \to \infty} \ \abs{I}^{-1}\sum_{n \in I} \mu(B(n)),
\end{equation*}
where $I$ is an interval and the notation $I \to \infty$ denotes that both $\abs{I}$ and $\min I$ tend to infinity\footnote{The condition $\min I \to \infty$ is often omitted from the definition. However, some simple analysis shows that, whether or not this condition is included, the resulting density is the same.}. Now $\db(B) \geq \da(B)$ for any set $B$ and so all of our results also hold for asymptotic density.

It should be noted that limit superiors are only subadditive (and not additive). In particular, for disjoint sets $A, B \subset \cF$ we have $\db(A \cup B) \leq \db(A) + \db(B)$ and equality may not hold. As an example, the sets
\begin{align*}
    A & = \bigcup_{n \in \bZ^+} \cF(\set{(2n - 1)! + 1, (2n - 1)! + 2, \dotsc, (2n)!}), \\
    B & = \bigcup_{n \in \bZ^+} \cF(\set{(2n)! + 1, (2n)! + 2, \dotsc, (2n + 1)!})
\end{align*}
are disjoint and both have density 1.

Despite this, in the group of non-negative integers $\cF = \bZ^+$, $\db(B)$ satisfies some useful properties. For example, it holds that $\abs{d^I(x + B) - d^I(B)} \le x / \abs{I}$. This implies that $\db(x + B) = \db(B)$. Even more importantly, if $x_1, \dots, x_n \in \bZ^+$ are such that $x_1 + B, \dots, x_n + B$ are disjoint, then $d^I(x_1 + B) + \dots + d^I(x_n + B) \le 1$, implying that
\begin{equation*}
    n \cdot d^I(B) \le \sum_{i = 1}^n \left(d^I(x_i + B) + \frac{x_i}{\abs{I}}\right) \le 1 + \sum_{i = 1}^n \frac{x_i}{\abs{I}}
\end{equation*}
and so $n \cdot \db(B) \le 1$. Not only can this provide upper bounds on the density of $B$, but if we knew that $\db(B) > 1/n$, we could conclude that the sets $x_1 + B, \dots, x_n + B$ cannot all be disjoint and thereby deduce some structural information about $B$. Such arguments were used by \L uczak and Schoen~\cite{Luczak95,LS97} for their results about sum-free subsets of the non-negative integers.

If $\abs{\cA} > 1$, these arguments no longer work. For example, if $w \in \cF$, it is easy to see that $\db(w B) = \abs{\cA}^{-\abs{w}} \cdot \db(B)$ where $w B \coloneqq \set{w b \colon b \in B}$. Also, the fact that $w_1 B, \dots, w_n B$ are disjoint gives no general upper bound on the density of $B$. Even if we consider nested sets $B, w B, \dots, w^n B$, taking $B \coloneqq \cF \setminus (w \cF)$ provides an example where these sets are pairwise disjoint, but $\db(B) = 1 - \abs{\cA}^{-\abs{w}}$ which can be arbitrarily close to $1$.

We address these issues in the next section. By modifying the density that we consider, we can ensure that the density is additive. Importantly, the density of the set $S \subset \cF$ whose upper Banach density we want to bound will not change. Moreover, in certain situations, we prove that $n$ disjoint nested copies of $B$ imply that the density of $B$ is at most $1/n$. This will be crucial for proving our structural results.

\section{Diagonalisation and relative density}\label{sec:technical}

Throughout the paper $S \subset \cF$ will be a fixed set whose upper Banach density we wish to bound (for example, $S$ might be $k$-product-free). There is a sequence of intervals $(I_j)$ such that $I_j \to \infty$ and
\begin{equation*}
    d^{I_j}(S) \to \db(S), \quad \text{as } j \to \infty.
\end{equation*}
Let $B \subset \cF$ be another set. The sequence $(d^{I_j}(B))$ is bounded (all terms are in $[0, 1]$) and so, by the Bolzano-Weierstrass theorem, has a convergent subsequence. In particular, by passing to a subsequence of $(I_j)$, we may assume that $d^{I_j}(S) \to \db(S)$ and $(d^{I_j}(B))$ converges to some limit that we will call \defn{$\di(B)$}. Given a countable collection of subsets of $\cF$, we may, by a diagonalisation argument, assume there is a subsequence $(I_j)$ such that $d^{I_j}(B) \to \di(B)$ for every $B$ in the collection where $\di(S) = \db(S)$. Throughout this paper we will only ever consider countably many sequences and so this convergence occurs for all sets we consider. These limits, unlike the corresponding upper Banach densities, are additive. Indeed, if sets $A$ and $B$ are disjoint, then $d^{I_j}(A \cup B) = d^{I_j}(A) + d^{I_j}(B)$ and so $\di(A \cup B) = \di(A) + \di(B)$. It should be noted that while $\di(S) = \db(S)$, we only have $\di(B) \leq \db(B)$ for the other sets that we consider.

For our structural proofs we will need not only to bound the density of a product-free set $S$ but also to bound the density of $S$ on subtrees. We now begin to define this.

The product \defn{AB} of two sets $A, B \subset \cF$ is
\begin{equation*}
    AB \coloneqq \set{ab \colon a \in A, b \in B}
\end{equation*}
and the set $B^k$ is the product of $k$ copies of $B$. Note that $B$ is $k$-product-free exactly if $B \cap B^k = \emptyset$. A particular important example of a product is $w \cF$ for a word $w \in \cF$: this is exactly the subtree of $\cF$ consisting of all words starting with $w$. Similarly $B \cF$ is exactly the set of words that have a prefix in $B$.

For a finite set $B \subset \cF$ we write \defn{$\min B$} and \defn{$\max B$} for the length of the shortest and longest words in $B$, respectively. Note that if $B$ is finite, then for $n \geq \max B$ the random infinite word $\W$ hits $(B\cF)(n)$ if and only if it hits $B$.

\begin{observation}\label{obs:densityCF}
    If $n \geq \abs{w}$, then $\mu((w \cF)(n)) = \mu(w)$. If $C \subset \cF$ is prefix-free and finite, then $\mu((C \cF)(n)) = \mu(C)$ for $n \geq \max C$.
\end{observation}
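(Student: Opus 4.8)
The plan is to reduce both statements to the basic multiplicativity of $\mu$ under left-concatenation: for any word $w \in \cF$ and any set $B \subset \cF$, $\mu(wB) = \mu(w)\,\mu(B)$. This is immediate from the definition, since $b \mapsto wb$ is injective and $\mu(wB) = \sum_{b \in B} \mu(wb) = \sum_{b \in B} \abs{\cA}^{-\abs{w} - \abs{b}} = \abs{\cA}^{-\abs{w}} \sum_{b \in B} \mu(b) = \mu(w)\,\mu(B)$.

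For the first statement, I would observe that when $n \geq \abs{w}$ a length-$n$ word has $w$ as a prefix precisely when it equals $w$ followed by an arbitrary word of length $n - \abs{w}$, so $(w\cF)(n) = w\,\cF(n - \abs{w})$. Applying the multiplicativity above and using $\mu(\cF(m)) = 1$ for every $m \geq 0$ gives $\mu((w\cF)(n)) = \mu(w)\,\mu(\cF(n - \abs{w})) = \mu(w)$.

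For the second statement, the key point is that prefix-freeness makes $(C\cF)(n) = \bigcup_{c \in C} (c\cF)(n)$ a \emph{disjoint} union: if a word $w$ lay in $(c_1\cF)(n) \cap (c_2\cF)(n)$ with $c_1 \neq c_2$, then $c_1$ and $c_2$ would both be prefixes of $w$, so the shorter of the two would be a prefix of the longer, contradicting that $C$ is prefix-free. Hence, for $n \geq \max C$ (so that $\abs{c} \leq n$ for every $c \in C$, allowing the first statement to be applied termwise), $\mu((C\cF)(n)) = \sum_{c \in C} \mu((c\cF)(n)) = \sum_{c \in C} \mu(c) = \mu(C)$.

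There is no genuine obstacle here; the statement is essentially bookkeeping with the definition of $\mu$. The only point requiring a moment's care is the disjointness in the second part, which is exactly where the prefix-free hypothesis enters and why the condition $n \geq \max C$ is needed so that the first part can be summed over $c \in C$.
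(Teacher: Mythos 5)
Your proof is correct and amounts to the same bookkeeping the paper has in mind: it states this as an observation, justified by the preceding remark that for $n \geq \max C$ the random word $\W$ hits $(C\cF)(n)$ if and only if it hits $C$, and hits a prefix-free set at most once. Your direct computation (multiplicativity $\mu(w B) = \mu(w)\mu(B)$, the identity $(w\cF)(n) = w\,\cF(n-\abs{w})$, and disjointness of the union over $c \in C$ from prefix-freeness) is just the deterministic rendering of that argument, with the hypotheses $n \geq \max C$ and prefix-freeness entering exactly where they should.
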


\begin{definition}[relative density]
    Let $w \in \cF$ and $B \subset \cF$. For $n \geq \abs{w}$, the relative density of $B$ in $w \cF$ on layer $n$ is
    \begin{equation*}
        \frac{\abs{B(n) \cap w \cF}}{\abs{\cF(n) \cap w \cF}} = \frac{\mu(B(n) \cap w \cF)}{\mu(\cF(n) \cap w \cF)} = \frac{\mu(B(n) \cap w \cF)}{\mu(w)}
    \end{equation*}
    which is the probability that $\W_n$ is in $B$ conditioned on the event that $\W$ hits $w$. If $n < \abs{w}$, then we will take the relative density to be $0$ by convention.

    Furthermore, if $I$ is an interval with $\min I \geq \abs{w}$, then the \defn{relative density of $B$ in $w \cF$ on interval $I$} is
    \begin{equation*}
        d_{w\cF}^I(B) \coloneqq \frac{\mu(B(I) \cap w \cF)}{\mu(\cF(I) \cap w \cF)} = \abs{I}^{-1} \mu(w)^{-1} \sum_{n \in I} \mu(B(n) \cap w\cF) = \mu(w)^{-1} \cdot d^I(B \cap w\cF).
    \end{equation*}
    If $\min I < \abs{w}$, then we will take the relative density to be $0$ by convention.
\end{definition}

Note that if $w$ is the empty word then this relative density is just $d^I(B)$.

Consider the sequence of intervals $(I_j)$ given above where $d^{I_j}(B) \to \di(B)$ for every set $B$ in a countable collection. For each word $w \in \cF$ and each set in the collection, the sequence $(d_{w \cF}^{I_j}(B))$ is bounded (all terms are in $[0, 1]$) and so, by the Bolzano-Weierstrass theorem, has a convergent subsequence. Since $\cF$ is countable (it consists of only finite words) we may, via a diagonalisation argument, pass to a subsequence $(I_j)$ such that, for every $w \in \cF$ and every $B$ in the countable collection, $(d_{w \cF}^{I_j}(B))$ converges to some limit \defn{$\di_{w\cF}(B)$}. In conclusion, we may assume throughout the paper that for any set $B$ we encounter and for all $w \in \cF$ we have
\begin{equation*}
    d_{w \cF}^{I_j}(B) \to \di_{w\cF}(B),
\end{equation*}
where $\di(B) \leq \db(B)$ and $\di(S) = \db(S)$ for one fixed set $S$. As before, these limits are additive. They satisfy the useful property that we may strip away prefixes.

\begin{lemma}\label{lem:densitycancel}
    If $w, v \in \cF$, then $\di_{w v \cF}(w B) = \di_{v \cF}(B)$.
\end{lemma}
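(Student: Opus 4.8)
The plan is to work entirely along the fixed diagonalising sequence of intervals $(I_j)$ and to reduce the statement, via the definition of relative density, to an elementary identity between the ordinary densities $d^{I_j}$. The conceptual heart is the set‑theoretic identity
\[
	wB \cap wv\cF = w(B \cap v\cF),
\]
which says that intersecting with the subtree $wv\cF$ and then stripping the prefix $w$ is the same as stripping $w$ first and then intersecting with $v\cF$. This is immediate: a word lies in $wB$ and begins with $wv$ precisely when it equals $wb$ for some $b \in B$ that itself begins with $v$.

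Next I would record the scaling identity: for any $C \subset \cF$ and any interval $I$ with $\min I \ge \abs{w}$,
\[
	d^I(wC) = \mu(w) \cdot d^{I - \abs{w}}(C),
\]
where $I - \abs{w}$ denotes the translated interval $\set{n - \abs{w} \colon n \in I}$. This holds because $(wC)(n) = w \cdot C(n - \abs{w})$, each word $wc$ has measure $\mu(w)\mu(c)$, and translating an interval does not change its length. Feeding $C = B \cap v\cF$ into this, using the previous identity together with $\mu(wv) = \mu(w)\mu(v)$, and unwinding the definition of relative density gives, for every interval $I$ with $\min I \ge \abs{wv}$,
\[
	d_{wv\cF}^I(wB) = \mu(wv)^{-1} d^I\bigl(w(B \cap v\cF)\bigr) = \mu(v)^{-1} d^{I - \abs{w}}(B \cap v\cF) = d_{v\cF}^{I - \abs{w}}(B).
\]
So the two relative densities agree at every sufficiently late interval, up to translating the interval by the fixed amount $\abs{w}$.

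Finally I would pass to the limit. For $j$ large enough we have $\min I_j \ge \abs{wv}$, so $d_{wv\cF}^{I_j}(wB) = d_{v\cF}^{I_j - \abs{w}}(B)$, and the left‑hand side converges to $\di_{wv\cF}(wB)$ by the diagonalisation construction above. The one point that needs care is that translating an interval by a bounded amount does not affect this limit: since $d_{v\cF}^I(B)$ is an average over $n \in I$ of the quantities $\mu(B(n) \cap v\cF)/\mu(v) \in [0,1]$, one has $\abs{d_{v\cF}^{I - \abs{w}}(B) - d_{v\cF}^I(B)} \le 2\abs{w}/\abs{I}$, which tends to $0$ as $\abs{I} \to \infty$; combined with $d_{v\cF}^{I_j}(B) \to \di_{v\cF}(B)$ this forces $d_{v\cF}^{I_j - \abs{w}}(B) \to \di_{v\cF}(B)$ as well, and hence $\di_{wv\cF}(wB) = \di_{v\cF}(B)$. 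I expect this translation‑invariance step — the only place where $\abs{I_j} \to \infty$ is genuinely used — to be the sole source of (mild) friction; everything else is just unwinding the definitions.
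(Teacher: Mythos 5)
Your proposal is correct and follows essentially the same route as the paper: both strip the prefix $w$ to identify $d^I_{wv\cF}(wB)$ with the density of $B$ relative to $v\cF$ on the shifted interval $I-\abs{w}$, and then observe that shifting an interval by the fixed amount $\abs{w}$ changes the density by at most $O(\abs{w}/\abs{I})$, which vanishes along the diagonal sequence $(I_j)$. The only cosmetic difference is that the paper bounds $\abs{d^I_{wv\cF}(wB) - d^I_{v\cF}(B)}$ directly by $\abs{I}^{-1}\mu(v)^{-1}\abs{w}$ rather than via your intermediate translation-invariance step, but the content is identical.
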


\begin{proof}
    Let $I$ be any interval with $\min I > \abs{wv}$. Now
    \begin{equation*}
        d^I_{wv\cF}(wB) = \abs{I}^{-1} \mu(wv)^{-1} \sum_{n \in I} \mu((wB)(n) \cap wv\cF).
    \end{equation*}
    Removing the leading $w$ from each word in $(wB)(n) \cap wv\cF$ shows that $\mu((wB)(n) \cap wv\cF) = \mu(w) \cdot \mu(B(n - \abs{w}) \cap v\cF)$. Also $\mu(wv) = \mu(w)\mu(v)$ and so
    \begin{equation*}
        d^I_{wv\cF}(wB) = \abs{I}^{-1} \mu(v)^{-1} \sum_{n \in I - \abs{w}} \mu(B(n) \cap v \cF),
    \end{equation*}
    where $I - \abs{w}$ is the interval obtained by subtracting $\abs{w}$ from each element of $I$. Thus
    \begin{equation*}
        \abs{d^I_{wv\cF}(wB) - d^I_{v \cF}(B)} = \abs{I}^{-1} \mu(v)^{-1} \cdot \abs[\bigg]{\sum_{n \in I - \abs{w}} \mu(B(n) \cap v \cF) - \sum_{n \in I} \mu(B(n) \cap v \cF)}
    \end{equation*}
    But, for each integer $n$, $\mu(B(n) \cap v \cF) \in  [0, 1]$ and so
    \begin{equation*}
        \abs{d^I_{wv\cF}(wB) - d^I_{v \cF}(B)} \leq \abs{I}^{-1} \mu(v)^{-1} \cdot \abs{w}
    \end{equation*}
    Setting $I = I_j$ and taking $j$ to infinity gives the required result.
\end{proof}

We are now ready to make an important definition that captures the densest that a set $B$ can be down a subtree.

\begin{definition}[sup density]
    For a set $B$ in the countable collection, the \defn{sup density of $B$} is
    \begin{equation*}
        \dsup(B) \coloneqq \sup_{w \in \cF} \di_{w \cF}(B).
    \end{equation*}
\end{definition}

Of course, the sup density satisfies $\dsup(B) \geq \di(B)$ (note that the empty word is in $\cF$) and so $\dsup(S) \geq \db(S)$.

We will prove the following strengthening of \cref{thm:ORS,thm:kpf} in \cref{sec:pfdensity}.

\begin{theorem}\label{thm:pfdensity}
    Let $k \geq 2$ be an integer, $\cA$ be a finite set, and $\cF$ be the free semigroup with alphabet $\cA$.
    \begin{enumerate}[label = {\textup{(\alph{*})}}]
        \item If $S \subset \cF$ is strongly $k$-product-free, then $\db(S) \leq 1/k$. Moreover, if $\db(S) = 1/k$, then $\dsup(S) = 1/k$. \label{thm:skpfdensity}
        \item If $S \subset \cF$ is $k$-product-free, then $\db(S) \leq 1/\rho(k)$. Moreover, if $\db(S) = 1/\rho(k)$, then $\dsup(S) = 1/\rho(k)$. \label{thm:kpfdensity}
    \end{enumerate}
\end{theorem}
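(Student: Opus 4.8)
The plan is to prove Theorem~\ref{thm:pfdensity} by a weighted averaging argument over nested copies of a well-chosen prefix-free set, exploiting the additivity of $\di$ and the fact that $\dsup(S)\geq\db(S)$. The starting observation is that if $S$ is $k$-product-free, then $S \cap S^k = \varnothing$, and more generally, for any finite prefix-free subset $C \subseteq S$, the sets $S$ and $CS^{k-1}$ are disjoint (since a word in $S \cap CS^{k-1}$ would witness a $k$-fold product), and similarly for strong $k$-product-freeness, $S$ is disjoint from $C S^{\ell-1}$ for each $2 \leq \ell \leq k$. The key point is that $C S^{k-1}$ contains many disjoint nested translates of $S$: if $w_1 S, \dots, w_{k-1} S$ are prefixes built from words of $S$ — more precisely, if $u_1, \dots, u_{k-1} \in S$ and $w_i := u_1 u_2 \cdots u_i$ — then $w_{k-1} S \subseteq C S^{k-1}$ whenever $u_1 \in C$, and the translates $S, w_1 S, \dots, w_{k-1} S$ can be arranged to be pairwise disjoint.

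**Next I would** make this quantitative using sup density. Fix $\epsilon > 0$ and choose $w \in \cF$ with $\di_{w\cF}(S) \geq \dsup(S) - \epsilon =: \delta - \epsilon$. By \cref{lem:densitycancel}, $\di_{wv\cF}(wB) = \di_{v\cF}(B)$, so densities transfer cleanly under prepending prefixes. The idea is to find words $u_1, \dots, u_{k-1}$ in $S$ (or in a layer of $S$ of near-optimal relative density inside the relevant subtree) such that the subtrees $w_1 \cF, \dots, w_{k-1}\cF$ are nested, each $w_i \cF$ carries relative $\di$-density of $S$ close to $\delta$, and the pieces $S \cap w_i\cF$ within a common ambient subtree are pairwise disjoint for $i = 0, 1, \dots, k-1$ (with $w_0$ the empty word). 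Relative to the subtree $w_0\cF = \cF$ (or a suitable $w\cF$), additivity of $\di$ then yields $k\cdot(\delta - \epsilon) \leq \di_{?\cF}(\cF) \cdot (\text{something}) \leq 1$ after renormalising — more carefully, one works inside a fixed subtree $u\cF$, writes $w_i = u v_i$, and uses that $S \cap w_1\cF, \dots, S \cap w_{k-1}\cF$ together with $S \cap u\cF$ lie in disjoint parts, each contributing relative density $\approx \delta$ to $u\cF$, forcing $k\delta \leq 1$. Letting $\epsilon \to 0$ gives $\dsup(S) \leq 1/k$ in the strong case, hence $\db(S) \leq 1/k$; and in the equality case $\db(S) = 1/k$ forces $\dsup(S) = 1/k$ as well (the inequality $\dsup(S) \geq \db(S)$ is already known, and the argument shows $\dsup(S) \leq 1/k$). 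For part~\ref{thm:kpfdensity} the arithmetic changes: with only the single relation $S \cap S^k = \varnothing$ available, one instead builds a prefix-free structure reflecting the residues mod $\rho(k)$ — the point being that $\rho(k)$ is the least $\ell$ with $\ell \nmid k-1$, so one can pack $\rho(k)$ disjoint nested translates using chains whose lengths are $\equiv 1 \pmod{\text{divisors of } k-1}$ but exploiting that $\rho$ does not divide $k-1$ to close up a "cycle" of translates; this yields $\rho(k)\cdot\dsup(S) \leq 1$.

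**The main obstacle** I expect is arranging the \emph{disjointness} of the nested translates $S \cap w_i\cF$ while simultaneously keeping each relative density close to $\delta$. Disjointness is not automatic: $w_{i-1}\cF \supseteq w_i\cF$, so $S \cap w_{i-1}\cF$ and $S \cap w_i\cF$ overlap unless we are careful about which layers we use — the natural fix is to work layer by layer, choosing the $u_i$ from a single near-optimal layer $n_i$ of $S$ inside $w_{i-1}\cF$ so that $w_i$ has length $> $ everything relevant, and to note that a word of $S$ of length in a bounded range cannot simultaneously lie in $S$ and have a proper prefix in $S$ that is "too long". This is exactly the kind of bookkeeping that the machinery of \cref{sec:steeple} (the "steeple" construction alluded to in the introduction) is presumably designed to handle, so I would expect the real proof to set up a combinatorial gadget — a carefully layered prefix-free tree of translates — rather than the bare-hands version sketched here. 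A secondary difficulty is the passage from a bound on $\dsup$ at equality: one must rule out that $\db(S) = 1/\rho(k)$ while $\dsup(S) > 1/\rho(k)$, but since the argument directly bounds $\dsup(S) \leq 1/\rho(k)$ this is immediate. Finally, for the $k$-product-free (non-strong) case the number-theoretic packing — realising $\rho(k)$ pairwise-disjoint nested translates from products of exactly $k$ elements — requires choosing chain-lengths that sum correctly modulo $\rho(k)$, and verifying that $\rho(k) \nmid k-1$ is precisely what permits the cyclic closure; getting the indices to line up is the delicate combinatorial heart of part~\ref{thm:kpfdensity}.
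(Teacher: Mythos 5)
There is a genuine gap at the heart of your sketch for part~\ref{thm:skpfdensity}. The sets $S \cap w_i\cF$ you propose to add up are nested, not disjoint, so the disjointness coming from product-freeness must instead be applied to the translates $S, u_1S, u_1u_2S, \dotsc, u_1\dotsb u_{k-1}S$. But then, working relative to the common subtree $w_{k-1}\cF$ and applying \cref{lem:densitycancel}, the relative density of $u_1\dotsb u_iS$ is $\di_{u_{i+1}\dotsb u_{k-1}\cF}(S)$, i.e.\ the density of $S$ down the subtree rooted at a \emph{suffix product} of your chosen chain. Nothing forces these suffix-rooted subtrees to have density anywhere near $\dsup(S)$: the maximiser $w$ of the sup density has no reason to be (close to) a product of elements of $S$, and choosing the $u_i$ from ``near-optimal layers'' does not repair this, since the issue is the location of the subtrees, not the layers used. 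This is exactly the uneven-distribution obstacle the paper's \cref{sec:steeple} is built to overcome, and the mechanism there is different from your chain: one takes an $\epsilon$-capturing steeplechase $(C_k)$ for $S$, proves the covering statement (\cref{lem:steepleCw}) that $Cw\cF$ occupies almost all of $C_1\cF$ in measure for the \emph{fixed} near-maximiser $w$ of the second factor, deduces \cref{lem:densityAB} ($\di_{C_1\cF}(AB) \geq \dsup(B) - 3\epsilon^{1/3}$), and then iterates with $B = S, S\cup S^2, \dotsc$ to get $1 \geq (k-1)\di(S) + \dsup(S)$ up to errors. Note also that this yields a weaker (and correct) inequality than the one you claim: your sketch purports to show $k\cdot\dsup(S) \leq 1$ unconditionally, which is not what the theorem asserts and is not delivered by the paper's argument either; the ``moreover'' clause is precisely the consequence of the asymmetric bound $(k-1)\di(S)+\dsup(S)\leq 1$ in the case $\db(S)=1/k$.

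For part~\ref{thm:kpfdensity} the proposal is essentially a placeholder. With only $S \cap S^k = \emptyset$ available, one cannot ``pack $\rho(k)$ disjoint nested translates'' by choosing chain lengths modulo divisors of $k-1$: disjointness of the needed sets has to be manufactured through the intersections $S_A = \bigcap_{i \in A} S^i$, and the paper's proof (\cref{prop:Asequence,prop:Asequenceexist}) decomposes $S$ recursively into pieces $S^{(\ell)}$, runs a separate $\epsilon$-capturing steeplechase for each $R^{(\ell)} = S_{A_\ell}\cap S^{(\ell)}$, bounds the density of $S^{(\ell)}$ relative to $C_1^{(\ell)}\cF$ by $1/\rho$ using a nontrivial arithmetic condition on the sequence $A_1,\dotsc,A_m$ (verified via arithmetic progressions $\set{1,d+1,\dotsc,td+1}$ with $d \mid k-1$, plus bespoke sequences for $k \in \set{2,3,5,7,13}$), and finally sums over $\ell$ after checking that almost none of $S$ escapes the steeplechases. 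Your sketch contains neither this decomposition nor any workable substitute for it, so part~\ref{thm:kpfdensity} remains unproved as written.
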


This strengthening is needed for our structural results, \cref{thm:skpfstructure,thm:kpfstructure}. For example, if $S \subset \cF$ is strongly $k$-product-free with $\db(S) = 1/k$, then by \ref{thm:skpfdensity}, $\di(S) = \db(S) = \dsup(S)$. This suggests that $S$ is uniformly distributed down subtrees which is made precise by the following lemma.

\begin{lemma}\label{lem:densityuniform}
    If $\di(B) = \dsup(B)$, then $\di_{w\cF}(B) = \di(B)$ for every word $w \in \cF$.
\end{lemma}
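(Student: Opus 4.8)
The plan is to establish an averaging identity expressing $\di_{w\cF}(B)$ as the mean of the relative densities at the children of $w$, and then to propagate the value $\di(B)$ downward from the root of the tree, using the hypothesis $\dsup(B) = \di(B)$ to forbid any decrease.

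\emph{Step 1: an averaging identity.} Fix $w \in \cF$. For $j$ large enough that $\min I_j \geq \abs{w} + 1$ (possible since $\min I_j \to \infty$), no word of $\cF(I_j)$ equals $w$, and every word of $w\cF \cap \cF(I_j)$ has a well-defined $(\abs{w}+1)$-st letter; hence $w\cF \cap \cF(I_j)$ is the disjoint union of the sets $wa\cF \cap \cF(I_j)$ over $a \in \cA$. Applying additivity of $\mu$ layer by layer gives $d^{I_j}(B \cap w\cF) = \sum_{a \in \cA} d^{I_j}(B \cap wa\cF)$. Dividing by $\mu(w)$, recalling that $d^{I_j}_{v\cF}(B) = \mu(v)^{-1} d^{I_j}(B \cap v\cF)$ and that $\mu(wa) = \mu(w)/\abs{\cA}$, we get $d^{I_j}_{w\cF}(B) = \abs{\cA}^{-1} \sum_{a \in \cA} d^{I_j}_{wa\cF}(B)$. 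Letting $j \to \infty$ yields
\begin{equation*}
    \di_{w\cF}(B) = \frac{1}{\abs{\cA}} \sum_{a \in \cA} \di_{wa\cF}(B) \qquad \text{for every } w \in \cF.
\end{equation*}

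\emph{Step 2: downward induction on $\abs{w}$.} When $w$ is the empty word, $\di_{w\cF}(B) = \di(B)$ by definition of $\di$. For the inductive step, assume $\di_{w\cF}(B) = \di(B)$. By Step 1, $\di(B) = \abs{\cA}^{-1} \sum_{a \in \cA} \di_{wa\cF}(B)$, where each summand satisfies $\di_{wa\cF}(B) \leq \dsup(B) = \di(B)$. Thus the average of $\abs{\cA}$ reals, each at most $\di(B)$, equals exactly $\di(B)$; this forces every summand to equal $\di(B)$, i.e.\ $\di_{wa\cF}(B) = \di(B)$ for all $a \in \cA$. This completes the induction, and since every $w \in \cF$ is reached, it completes the proof.

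The only point needing care is the exactness of the splitting in Step 1: one must note that once $\min I_j > \abs{w}$, the word $w$ and all its proper prefixes lie strictly below the interval $I_j$ and so contribute nothing, making the decomposition of $d^{I_j}_{w\cF}(B)$ across the children an exact identity rather than merely an approximate one (no error term is needed, in contrast to \cref{lem:densitycancel}). Given that, the recursion is just layerwise additivity of $\mu$, and the hypothesis $\dsup(B) = \di(B)$ supplies the rest; I do not expect a genuine obstacle here.
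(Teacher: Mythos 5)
Your proof is correct and is essentially the paper's argument: both rest on the exact additivity of relative densities over a partition into subtrees together with the bound $\di_{v\cF}(B) \le \dsup(B) = \di(B)$, which forces an average that equals its maximum to be constant. The only difference is organisational: the paper splits the global density over a whole layer in one step, writing $\di(B) = \sum_{w \in \cF(\ell)} \mu(w)\,\di_{w\cF}(B)$ with $\sum_{w \in \cF(\ell)} \mu(w) = 1$ for arbitrary $\ell$, whereas you split one level at a time over the children $wa$ of $w$ and propagate the equality downward by induction on $\abs{w}$.
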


\begin{proof}
    Let $\ell$ be a non-negative integer and let $I$ be an interval with $\min I > \ell$. Every word of length greater than $\ell$ is in exactly one $w\cF$ (where $w \in \cF(\ell)$). Hence,
    \begin{equation*}
        d^I(B) = \sum_{w \in \cF(\ell)} d^I(B \cap w \cF) = \sum_{w \in \cF(\ell)} \mu(w) \cdot d^{I}_{w\cF}(B).
    \end{equation*}
    Setting $I = I_j$ and taking $j$ to infinity gives
    \begin{equation*}
        \di(B) = \sum_{w \in \cF(\ell)} \mu(w) \cdot \di_{w \cF}(B).
    \end{equation*}
    Now $\sum_{w \in \cF(\ell)} \mu(w) = \mu(\cF(\ell)) = 1$ and every $w \in \cF(\ell)$ satisfies $\di_{w\cF}(B) \leq \dsup(B) = \di(B)$. Hence we must have $\di_{w\cF}(B) = \di(B)$ for every $w \in \cF(\ell)$. The integer $\ell$ was arbitrary and so we have the required result.
\end{proof}

The next two lemmas are the key technical results for our structural proofs. We remark that for the non-negative integers (that is, when $\abs{\cA} = 1$) they are much more obvious.

\begin{lemma}\label{lem:disjoint}
    Let $S \subset \cF$ be such that $\di(S) = \dsup(S) > 1/n$. Then, for any $w_1, \dotsc, w_n \in \cF$, the sets 
    \begin{equation*}
        w_1 S, \quad w_1 w_2 S, \quad \dotsc, \quad w_1 w_2 \dotsb w_{n-1} S, \quad w_1 w_2 \dotsb w_n S
    \end{equation*}
    cannot be pairwise disjoint.
\end{lemma}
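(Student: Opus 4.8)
The plan is to argue by contradiction using the two preparatory lemmas, working relative to a single well-chosen subtree. Suppose that $w_1 S,\ w_1 w_2 S,\ \dotsc,\ w_1 w_2 \dotsb w_n S$ are pairwise disjoint, and write $u_i \coloneqq w_1 w_2 \dotsb w_i$ and $v_i \coloneqq w_{i+1} w_{i+2} \dotsb w_n$ (with $v_n$ the empty word), so that $u_i v_i = u_n$ for every $i$. The key observation is that $u_n \cF$ is contained in all the subtrees $u_i \cF$, so it makes sense to measure all the sets $u_i S$ relative to the one subtree $u_n\cF$.

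First I would compute the relative density of each $u_i S$ inside $u_n \cF$. Since $u_n \cF = u_i v_i \cF$ and $u_i$ is a prefix, a word $u_i s$ with $s \in S$ lies in $u_n \cF$ precisely when $v_i$ is a prefix of $s$; hence $u_i S \cap u_n \cF = u_i(S \cap v_i \cF)$. Relative density inside $u_n \cF$ is unchanged by intersecting with $u_n \cF$, so applying \cref{lem:densitycancel} with $w = u_i$, $v = v_i$, and $B = S \cap v_i\cF$ gives
\[
    \di_{u_n \cF}(u_i S) = \di_{u_n\cF}\bigl(u_i(S \cap v_i\cF)\bigr) = \di_{v_i\cF}(S \cap v_i\cF) = \di_{v_i\cF}(S).
\]
Now the hypothesis $\di(S) = \dsup(S)$ lets me invoke \cref{lem:densityuniform}, which gives $\di_{w\cF}(S) = \di(S)$ for every $w \in \cF$; in particular $\di_{v_i\cF}(S) = \di(S)$ for each $i$, so $\di_{u_n\cF}(u_i S) = \di(S)$ for all $i = 1, \dotsc, n$.

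Finally, pairwise disjointness of the $u_i S$ forces the sets $u_i S \cap u_n\cF$ to be pairwise disjoint subsets of $u_n\cF$. Additivity of $\di_{u_n\cF}$, together with $\di_{u_n\cF}(\cF) = 1$ (which follows from \cref{obs:densityCF}, since $\mu((u_n\cF)(m)) = \mu(u_n)$ for $m \geq \abs{u_n}$), yields
\[
    n \cdot \di(S) = \sum_{i=1}^n \di_{u_n\cF}(u_i S) \leq \di_{u_n\cF}(\cF) = 1,
\]
so $\di(S) \leq 1/n$, contradicting $\di(S) > 1/n$. I do not expect a genuine obstacle: the only points requiring care are that the relative-density conventions behave well once $\min I_j$ exceeds $\abs{u_n}$ (true for all large $j$, so the identities survive in the limit), and that the finitely many auxiliary sets $u_i S$ and $S \cap v_i\cF$ may be assumed to belong to the countable collection along which the diagonal sequence $(I_j)$ converges — both guaranteed by the setup in \cref{sec:technical}. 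If anything is delicate, it is keeping the prefix manipulation $u_i S \cap u_n\cF = u_i(S\cap v_i\cF)$ and the bookkeeping of which subtree each density is taken in, but this is routine.
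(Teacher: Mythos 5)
Your proof is correct and follows essentially the same route as the paper: measure all the sets relative to the single subtree $w_1\dotsb w_n\cF$, strip prefixes with \cref{lem:densitycancel}, equalise the resulting terms via \cref{lem:densityuniform}, and use additivity of the relative density to get $n\cdot\di(S)\le 1$, contradicting $\di(S)>1/n$. The only cosmetic difference is that you intersect with $u_n\cF$ explicitly before applying \cref{lem:densitycancel}, whereas the paper applies that lemma directly (its proof already performs the intersection).
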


\begin{proof}
    Assume that these sets are pairwise disjoint. Then, for any word $w \in \cF$,
    \begin{equation*}
        \di_{w\cF}(w_1 S) + \dots + \di_{w\cF}(w_1 \dotsb w_n S) = \di_{w\cF}((w_1 S) \cup \dots \cup (w_1 \dotsb w_n S)) \le 1.
    \end{equation*}
    Choose $w = w_1 \dotsb w_n$. Applying \cref{lem:densitycancel} to each term gives
    \begin{equation*}
        \di_{w_2 \dotsb w_n \cF}(S) + \dotsb + \di_{w_n \cF}(S) + \di_\cF(S) \leq 1.
    \end{equation*}
    By \cref{lem:densityuniform}, each term is $\di(S)$ which contradicts $\di(S) > 1/n$, as required.
\end{proof}

\begin{lemma}\label{lem:disjointtwo}
    Let $S \subset \cF$ be such that $\di(S) = \dsup(S) > 2/(2n-1)$. Then, for any $w_1, \dotsc, w_n, v_1, \dotsc, v_n \in \cF$ and $C \subset S$, either the sets 
    \begin{equation*}
        w_1 S, \quad w_1 w_2 S, \quad \dotsc, \quad w_1 \dotsb w_{n-1} S, \quad w_1 \dotsb w_n C
    \end{equation*}
    or the sets
    \begin{equation*}
        v_1 S, \quad v_1 v_2 S, \quad \dotsc, \quad v_1 \dotsb v_{n-1} S, \quad v_1 \dotsb v_n (S \setminus C)
    \end{equation*}
    are not pairwise disjoint.
\end{lemma}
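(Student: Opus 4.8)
The plan is to run the proof of \cref{lem:disjoint} separately on each of the two families of sets and then add the two resulting inequalities, exploiting that $C$ and $S \setminus C$ partition $S$ and that $\di$ is additive on disjoint sets.

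Concretely, I would argue by contradiction: suppose both families are pairwise disjoint. Apply the argument behind \cref{lem:disjoint} to the first family, taking $w \coloneqq w_1 \dotsb w_n$. Pairwise disjointness and additivity of $\di_{w\cF}$ give
\[
    \di_{w\cF}(w_1 S) + \dotsb + \di_{w\cF}(w_1 \dotsb w_{n-1} S) + \di_{w\cF}(w_1 \dotsb w_n C) \le 1 .
\]
By \cref{lem:densitycancel} the $j$-th term among the first $n-1$ equals $\di_{w_{j+1} \dotsb w_n \cF}(S)$ and the last equals $\di_{\cF}(C) = \di(C)$. Since $\di(S) = \dsup(S)$, \cref{lem:densityuniform} shows each of the first $n-1$ terms is exactly $\di(S)$, so $(n-1)\di(S) + \di(C) \le 1$. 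Repeating the same argument with the second family and $w \coloneqq v_1 \dotsb v_n$ yields $(n-1)\di(S) + \di(S \setminus C) \le 1$.

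Adding these two inequalities and noting that, since $C$ and $S \setminus C$ are disjoint with union $S$, additivity of $\di$ gives $\di(C) + \di(S \setminus C) = \di(S)$, I obtain $(2n-1)\di(S) \le 2$, i.e.\ $\di(S) \le 2/(2n-1)$, contradicting the hypothesis $\di(S) > 2/(2n-1)$. This completes the proof.

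There is essentially no real obstacle here beyond spotting the splitting trick: everything reduces to \cref{lem:disjoint,lem:densitycancel,lem:densityuniform} together with the additivity of $\di$. The only bookkeeping point is that $\di$ and the relative densities $\di_{w\cF}$ are only defined along the chosen subsequence for sets in the fixed countable collection, so I would record at the start that $C$, $S \setminus C$, and all the translated sets $w_1 \dotsb w_j S$, $v_1 \dotsb v_j S$, $w_1 \dotsb w_n C$, $v_1 \dotsb v_n (S \setminus C)$ are among the sets under consideration.
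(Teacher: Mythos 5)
Your proposal is correct and follows essentially the same route as the paper: assume both families are pairwise disjoint, apply the \cref{lem:disjoint} argument to each (via \cref{lem:densitycancel}), add the two inequalities using $\di(C) + \di(S \setminus C) = \di(S)$, and invoke \cref{lem:densityuniform} to conclude $(2n-1)\di(S) \le 2$, a contradiction. The only cosmetic difference is that you apply \cref{lem:densityuniform} before summing rather than after, which changes nothing.
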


\begin{proof}
    Assume that both collections of sets are pairwise disjoint. Then, as in the proof of \cref{lem:disjoint},
    \begin{equation*}
        \di_{w_2 \dotsb w_n \cF}(S) + \dots + \di_{w_n \cF}(S) + \di_{\cF}(C) \le 1
    \end{equation*}
    and
    \begin{equation*}
        \di_{v_2 \dotsb v_n \cF}(S) + \dots + \di_{v_n \cF}(S) + \di_{\cF}(S \setminus C) \le 1.
    \end{equation*}
    Note that $\di_{\cF}(S \setminus C) = \di_{\cF}(S) - \di_{\cF}(C)$. Applying this and adding the two inequalities, we get
    \begin{align*}
        \di_{w_2 \dotsb w_n \cF}(S) + \dots + \di_{w_n \cF}(S) + \di_{v_2 \dotsb v_n \cF}(S) + \dots + \di_{\cF}(S) \le 2.
    \end{align*}
    However, by \cref{lem:densityuniform}, each term is $\di(S)$ which contradicts $\di(S) > 2/(2n - 1)$, as required.
\end{proof}

\section{Structure of strongly \texorpdfstring{$k$}{k}-product-free sets}\label{sec:skpfstructure}

In this section we prove \cref{thm:skpfstructure} assuming \cref{thm:pfdensity}. Therefore, let $S \subset \cF$ be strongly $k$-product-free satisfying $\db(S) = 1/k$. Note, by \cref{thm:pfdensity}, that $\di(S) = 1/k = \dsup(S)$ and so we may and will frequently apply \cref{lem:disjoint,lem:disjointtwo} with $n = k + 1$.

We want to show that we can label each letter of $\cA$ with a label in $\Zk$ such that $S$ is a subset of
\begin{equation*}
    T \coloneqq \set{a \in \cF \colon \text{the sum of the labels of letters in $a$ is } 1 \bmod{k}}.
\end{equation*}
Assume that each $a \in \cF$ is labelled with this sum. To deduce the structure of $S$, we would like to identify these labels for all words $a \in \cF$. Clearly, everything in $S$ should be labelled $1$. For any other $a \in \cF$, appending a word from $S$ should increase the label by $1$. So, if $a$ has label $\ell$ and we append $i = -\ell \in \Zk$ words from $S$ to $a$, we should get the label $0$, and appending one more word from $S$ should give the label $1$, which might itself be a word from $S$. On the other hand, for any other $j \in \Zk$, appending $j + 1$ words from $S$ to $a$ should give a label different from $1$ and should therefore never yield a word from $S$.

Based on this intuition, for $i = 0, 1, \dotsc, k - 1$ define
\begin{equation*}
    T_i \coloneqq \set{a \in \cF \colon S \cap aS^{i + 1} \notempty}.
\end{equation*}
Then, everything in $T_i$ should have the label $-i \in \Zk$. So, we expect that $S \subset T_{k - 1}$ and that $T_i T_j \subset T_{i + j}$. This is exactly what we will show and which allows us to deduce the structure of $T_{k - 1}$, which will be the set $T$ from above.

\begin{remark}
    Throughout we will view the indices of the $T_i$ as elements of $\Zk$ and, in particular, all addition of indices is modulo $k$.
\end{remark}

Note that our definition of $T_i$ is slightly arbitrary. Whether we append or prepend words from $S$ to some $a \in \cF$, the change in the label of $a$ should always be the same. So, we could also have defined $T_i$ as the set $\set{a \in \cF \colon S \cap S^{i + 1}a \notempty}$. Fortunately, the following result tells us that these definitions are equivalent.

\begin{proposition}\label{prop:swap}
    For any positive integer $r$ and any $a \in \cF$, 
    \begin{equation*}
        S \cap S^r a \notempty \Leftrightarrow S \cap S^{r - 1} a S \notempty \Leftrightarrow \dotsb \Leftrightarrow S \cap S a S^{r - 1} \notempty \Leftrightarrow S \cap a S^r \notempty.
    \end{equation*}
\end{proposition}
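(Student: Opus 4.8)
The plan is to induct on $r$, the base case $r = 1$ being the only genuine difficulty.

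\textbf{Inductive step ($r \ge 2$).} Assume the proposition for $r - 1$. It is enough to establish each adjacent equivalence $S \cap S^{i} a S^{r-i} \notempty \Leftrightarrow S \cap S^{i+1} a S^{r-i-1} \notempty$ ($0 \le i \le r-1$), as these chain together. For the implication ``$\Leftarrow$'' with $i \ge 1$: from a witness $x_0 = x_1 \dotsb x_{i+1}\, a\, y_1 \dotsb y_{r-i-1} \in S$ (all $x_j, y_j \in S$), absorb $x_{i+1}$ together with $a$ into a single word $b \coloneqq x_{i+1} a$; then $x_0 = x_1 \dotsb x_i\, b\, y_1 \dotsb y_{r-i-1}$ shows $S \cap S^{i} b S^{r-i-1} \notempty$, a configuration with only $r - 1$ copies of $S$. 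The $(r-1)$-case of the proposition, applied to the word $b$, moves one copy of $S$ across $b$ to give $S \cap S^{i-1} b S^{r-i} \notempty$; re-expanding $b = x_{i+1} a$ and using $x_{i+1} \in S$, the new witness lies in $S^{i-1}\cdot S\cdot a\cdot S^{r-i} = S^{i} a S^{r-i}$. The case $i = 0$ is the same after absorbing $a$ with the first factor $y_1$ on its right instead, and the implication ``$\Rightarrow$'' then follows from ``$\Leftarrow$'' by word-reversal: this is a length-preserving bijection of $\cF$ taking $S$ to a set $\tilde S$ with $d^I(\tilde S) = d^I(S)$ for every interval $I$, so $\tilde S$ satisfies the same hypotheses and reversal interchanges $S \cap S^i a S^j \notempty$ with $\tilde S \cap \tilde S^{j}\, \tilde a\, \tilde S^{i} \notempty$.

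\textbf{Base case ($r = 1$): $S \cap Sa \notempty \Leftrightarrow S \cap aS \notempty$.} In contrast to the inductive step, this really uses $\db(S) = 1/k$: the finite set $S = \set{x, xa}$ with $x$ not beginning with $a$ is strongly $k$-product-free but violates the equivalence, failing only because $\db(S) = 0$. By reversal symmetry it suffices to prove $S \cap Sa \notempty \Rightarrow S \cap aS \notempty$, so fix $x_0 = x_1 a \in S$ with $x_1 \in S$. We have $\di(S) = \dsup(S) = 1/k > 1/(k+1)$, so \cref{lem:disjoint} applies with $n = k+1$: no $k+1$ nested sets $w_1 S, w_1 w_2 S, \dotsc, w_1 \dotsb w_{k+1} S$ are pairwise disjoint. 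The plan is to feed \cref{lem:disjoint} (and, if needed, \cref{lem:disjointtwo} with a well-chosen $C \subseteq S$) a list of $k+1$ words assembled from $x_1$, $a$, and powers of $x_0$, engineered so that strong $k$-product-freeness (i.e.\ $S \cap S^m = \varnothing$ for $2 \le m \le k$) forbids every pairwise intersection except the single one that encodes $S \cap aS \notempty$; then the conclusion of the lemma is exactly the statement we want.

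\textbf{Main obstacle.} The inductive step is bookkeeping, so all the work is the $r=1$ case, and the delicate point there is the choice of the $k+1$ words: the obvious choices only force a weaker conclusion of the shape ``$S \cap aS^{t} \notempty$ for some $t \in \set{1, \dotsc, k}$'', and cutting this down to $t = 1$ seems to need either a more clever family of words or a secondary induction (for instance on $\abs{a}$, reducing to the single-letter case) that repeatedly re-invests the output of the disjointness lemmas. That is where I expect the real difficulty to be.
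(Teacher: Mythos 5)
Your reduction of the general case to $r=1$ (absorb one factor of $S$ into the central word, apply the smaller case, re-expand) is sound and is essentially the paper's own reduction — the paper does it in one step, absorbing $S^i$ and $S^j$ into $a$ and quoting the $r=1$ case directly, rather than inducting on $r$ — and the use of the reverse map for the opposite direction also matches the paper. But the base case $r=1$, which you yourself identify as the entire content of the proposition, is not proved: you describe the intended shape of the argument (feed \cref{lem:disjoint} a nested family of $k+1$ sets so that strong $k$-product-freeness kills every intersection except the one encoding $S \cap aS \notempty$) and then concede you do not know which family works. That is a genuine gap, since exhibiting that family is exactly the missing idea.

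The family that works is built from powers of $x \coloneqq x_1$ (the element with $x, xa \in S$), not from powers of $x_0 = xa$: take the $k+1$ sets $S,\ xS,\ x^2S,\ \dotsc,\ x^{k-1}S,\ x^{k-1}aS$. The point is the rewriting $x^{k-1}aS = x^{k-2}(xa)S$, which lets you use both memberships: since $x \in S$ and $S$ is strongly $k$-product-free, the sets $S, xS, \dotsc, x^{k-1}S$ are pairwise disjoint; since $xa \in S$, the last set is disjoint from each of $S, xS, \dotsc, x^{k-2}S$. So \cref{lem:disjoint} (with $n = k+1$, using $\di(S) = \dsup(S) = 1/k > 1/(k+1)$) forces $x^{k-1}S \cap x^{k-1}aS \notempty$, and cancelling the common prefix $x^{k-1}$ in the free semigroup gives $S \cap aS \notempty$ exactly — not merely $S \cap aS^{t} \notempty$ for some $t$, which is what you get from the family $S, aS, axS, \dotsc, ax^{k-1}S$ used later for \cref{prop:partition}. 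In particular no appeal to \cref{lem:disjointtwo}, no secondary induction on $\abs{a}$, and no reduction to single letters is needed; \cref{lem:disjointtwo} is only required in the proof of \cref{prop:partition}, not here.
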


\begin{proof}
    We first prove the case $r = 1$. Suppose that $S \cap Sa \notempty$. Then there is some $x$ such that $x, xa \in S$. Consider the sets $S, xS, x^2S, \dotsc, x^{k - 1}S, x^{k - 1}aS = x^{k - 2}(xa)S$. By \cref{lem:disjoint}, these cannot all be pairwise disjoint. Since $S$ is strongly $k$-product-free and $x \in S$, the sets $S, xS, \dotsc, x^{k - 1}S$ are pairwise disjoint. Since $S$ is strongly $k$-product-free and $xa \in S$, the sets $S, xS, \dotsc, x^{k - 2}S, x^{k - 2}(xa)S$ are pairwise disjoint. Thus $x^{k - 1}S$ and $x^{k - 1}aS$ are not disjoint and so $S \cap aS \notempty$.

    Let $f \colon \cF \to \cF$ be the \defn{reverse map} that reverses each word of $\cF$ (that is, reads them from right to left). The function $f$ is a measure-preserving involution. Let $\overline{S} = f(S)$. Now $\overline{S}$ is a strongly $k$-product-free subset of $\cF$ with $\db(\overline{S}) = \db(S) = 1/k$. In particular, the previous paragraph shows that $\overline{S} \cap \overline{S}a \notempty \Rightarrow \overline{S} \cap a \overline{S} \notempty$. Now, $\overline{S} \cap \overline{S}a = f(S \cap aS)$ and $\overline{S} \cap a \overline{S} = f(S \cap Sa)$ and so $S \cap aS \notempty \Rightarrow S \cap Sa \notempty$ concluding the case $r = 1$.

    For the general case it suffices to prove that for all non-negative integers $i, j$: $S \cap S^{i + 1} a S^j \notempty \Leftrightarrow S \cap S^i a S^{j + 1} \notempty$. Suppose that $S \cap S^{i + 1} a S^j \notempty$. Then there is $x_i \in S^i$ and $x_j \in S^j$ such that $S \cap S x_i a x_j \notempty$. Applying the $r = 1$ case to the word $x_i a x_j$ shows that $S \cap x_i a x_j S \notempty$ and so $S \cap S^i a S^{j + 1} \notempty$. The other direction is analogous.
\end{proof}

If the sets $T_1, \dots, T_{k-1}$ are supposed to correctly identify the labels of all words $a \in \cF$, then every $a$ should be in exactly one of these sets, and $S$ should satisfy $S \subset T_{k - 1}$. This is proved by the following proposition.

\begin{proposition}\label{prop:partition}
    The sets $T_0$, $T_1$, \ldots, $T_{k - 1}$ partition $\cF$ and $S \subset T_{k - 1}$.
\end{proposition}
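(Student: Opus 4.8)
The proposition bundles three claims: $S\subseteq T_{k-1}$; $\cF=T_0\cup\dots\cup T_{k-1}$; and the $T_i$ are pairwise disjoint. For all three I would use the nested-set lemmas \cref{lem:disjoint,lem:disjointtwo} --- applicable since $\di(S)=\dsup(S)=1/k$ (\cref{thm:pfdensity}), so that $\di(S)>1/n$ for every $n\ge k+1$ --- together with \cref{prop:swap} and the elementary fact that if $g\in\cF$ and the word $t$ is a product of $m$ elements of $S$ with $1\le m\le k-1$, then $gS\cap gtS=\varnothing$ (an element of the intersection would give $s=ts'$ with $s,s'\in S$, making $s$ a product of $m+1\le k$ elements of $S$, impossible as $S$ is strongly $k$-product-free). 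Consequently, any chain $w_1S,\,w_1w_2S,\,\dots$ whose consecutive ``gaps'' $w_2,w_3,\dots$ are single elements of $S$ is automatically pairwise disjoint across any window of at most $k$ of its sets.

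For $S\subseteq T_{k-1}$: fix $a\in S$ (possible as $\db(S)>0$) and consider the $k+1$ sets $S,aS,a^2S,\dots,a^kS$. The first $k$ are pairwise disjoint, so \cref{lem:disjoint} forces $a^kS$ to meet some $a^mS$ with $0\le m\le k-1$; then $a^ks=a^ms'$ gives $s'=a^{k-m}s$, a product of $k-m+1$ elements of $S$, which is impossible unless $m=0$. Hence $a^ks=s'\in S$, so $s'=a\cdot(a^{k-1}s)\in aS^k$ and $a\in T_{k-1}$. For $\cF=\bigcup_iT_i$: fix any $x\in S$ and consider $S,aS,axS,ax^2S,\dots,ax^{k-1}S$; the last $k$ sets have single-element gaps $x$, so are pairwise disjoint, and hence $S$ meets $ax^mS$ for some $0\le m\le k-1$. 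Since $x^m\in S^m$ we have $ax^mS\subseteq aS^{m+1}$ (where $ax^0S=aS$), so $S\cap aS^{m+1}\ne\varnothing$ and $a\in T_m$.

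The real work is the disjointness. Suppose $a\in T_i\cap T_j$ with $0\le i<j\le k-1$. By definition there is $p=p_1\cdots p_{i+1}\in S^{i+1}$ with $ap\in S$, and by \cref{prop:swap} there is $q=q_1\cdots q_{j+1}\in S^{j+1}$ with $qa\in S$; the word $W:=qap$ then factors into elements of $S$ both as $q_1\cdots q_{j+1}\cdot(ap)$ (length $j+2$) and as $(qa)\cdot p_1\cdots p_{i+1}$ (length $i+2$). The plan is to splice these two factorisations into a single nested chain
\begin{equation*}
q_1S,\ q_1q_2S,\ \dots,\ qS,\ \ qaS,\ \ qap_1S,\ \dots,\ qap_1\cdots p_{i+1}S=WS,
\end{equation*}
pad it on the right by multiplications by a fixed element of $S$ until it has at least $k+1$ sets, and conclude from \cref{lem:disjoint} that the chain cannot be pairwise disjoint --- extracting a contradiction from the forced overlap. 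The obstacle, and where I expect all the difficulty to lie, is that $a$ need not belong to $S$: every gap in this chain is a single element of $S$ \emph{except} the gap ``$a$'' separating $qS$ from $qaS$, and two sets lying on opposite sides of $a$ can genuinely overlap --- indeed, via \cref{prop:swap}, such an overlap amounts to a relation $a\in T_\ell$ for some $\ell$, which cannot be excluded outright since $a\in T_i$ is assumed. To push the argument through one must avoid leaving any unprotected gap: absorb $a$ into an element of $S$ using $qa\in S$ (so that a subchain starts at the single element $qa\in S$), or symmetrically $ap\in S$; and when an overlap is forced, convert it via \cref{prop:swap} into a membership $a\in T_\ell$ with $\ell$ confined to a short interval of residues, then close the case by taking $j$ (or $j-i$) extremal, or by a minimal-counterexample argument, so that the only surviving possibility collides with strong $k$-product-freeness. \cref{lem:disjointtwo} is available as a further lever, since splitting $S$ into $C$ and $S\setminus C$ for a well-chosen $C$ lets one route the chain around the troublesome gap. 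The crux is thus a careful combinatorial accounting that guarantees one of the chains written down really is pairwise disjoint, rather than merely ``disjoint apart from overlaps we have already assumed.''
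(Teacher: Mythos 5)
Your arguments for the covering $\cF = T_0 \cup \dotsb \cup T_{k-1}$ and for $S \subset T_{k-1}$ are correct: the covering argument coincides with the paper's, and your membership argument for $S \subset T_{k-1}$ (via the chain $S, aS, \dotsc, a^kS$ and ruling out all overlaps except $S \cap a^kS$) is a valid direct route, whereas the paper instead deduces it afterwards from the partition by observing that $x \in S$ lies in none of $T_0, \dotsc, T_{k-2}$.

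However, the pairwise disjointness of the $T_i$ --- the heart of the proposition --- is not proved. You write down a single spliced chain $q_1S, \dotsc, qS, qaS, qap_1S, \dotsc, WS$ and correctly identify why it fails: the gap ``$a$'' between $qS$ and $qaS$ is unprotected, and an overlap across it only yields $a \in T_\ell$ for some $\ell$, which is no contradiction. The proposed remedies (extremal choice of $j$, minimal counterexample, using \cref{lem:disjointtwo} with ``a well-chosen $C$'') are left as hopes and none is executed, so the proof has a genuine gap exactly where you say the difficulty lies. The paper closes it with a specific construction. From $a \in T_i$ and \cref{prop:swap} pick $x \in S$ and $y \in S^i$ with $xay \in S$, and set $C \coloneqq \set{s \in S \colon ays \in S}$. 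The first family is $S, xS, \dotsc, x^{k-1}S, x^{k-1}ay(S \setminus C)$: the first $k$ sets are pairwise disjoint by strong $k$-product-freeness, the last is disjoint from the first $k-1$ because $x^{k-1}ay = x^{k-2}(xay)$ with $xay \in S$, and it is disjoint from $x^{k-1}S$ precisely because the elements of $C$ have been removed. From $a \in T_j$ pick $z_1, \dotsc, z_{j+1} \in S$ with $z_1 \dotsb z_{j+1}a \in S$ and form $S, z_1S, \dotsc, z_1^{k-j}S, z_1^{k-j}z_2S, \dotsc, z_1^{k-j}z_2 \dotsb z_jS, z_1^{k-j}z_2 \dotsb z_{j+1}ayC$: here the troublesome factor $a$ is always absorbed either into $z_1 \dotsb z_{j+1}a \in S$ or into $ayC \subset S$ (the definition of $C$ again), so every gap is protected, and the hypothesis $i < j$ is what makes the exponent bookkeeping (in particular $k - j \geq 1$) come out to exactly $k+1$ pairwise disjoint sets. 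Having both families pairwise disjoint contradicts \cref{lem:disjointtwo} with $n = k+1$ (applicable since $1/k > 2/(2k+1)$). So the missing ingredient is this complementary use of one set $C$, defined from the $T_i$-witness, across the two chains --- that is how \cref{lem:disjointtwo} actually ``routes around'' the unprotected gap, and without it your disjointness claim does not go through.
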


\begin{proof}
    Let $a \in \cF$ and $x \in S$. Consider the sets $S$, $aS$, $axS$, $ax^2S$, \ldots, $ax^{k - 1}S$. By \cref{lem:disjoint}, these cannot all be pairwise disjoint. Since $S$ is strongly $k$-product-free and $x \in S$, the sets $aS$, $axS$, \ldots, $ax^{k - 1}S$ are pairwise disjoint. Hence there is some $r \in \set{0, 1, \dotsc, k - 1}$ such that $S \cap ax^rS \notempty$ and so $S \cap aS^{r + 1} \notempty$. That is, $\cup_{r = 0}^{k - 1} T_r = \cF$.

    We next show that the $T_i$ are pairwise disjoint (and so partition $\cF$). Suppose that $a \in T_i \cap T_j$ where $0 \leq i < j \leq k - 1$. Since $a \in T_i$, $S \cap SaS^i \notempty$ and so there is $x \in S$ and $y \in S^i$ such that $xay \in S$. Let
    \begin{equation*}
        C \coloneqq \set{s \in S \colon ays \in S} \subset S.
    \end{equation*}
    Consider the $k + 1$ sets 
    \begin{equation*}
        S, \quad xS, \quad x^2S, \quad \dotsc, \quad x^{k - 1}S, \quad x^{k - 1}ay (S \setminus C).
    \end{equation*}
    As $S$ is strongly $k$-product-free and $x \in S$, the first $k$ of these sets are pairwise disjoint. Similarly, noting that $x^{k - 1}ay = x^{k - 2}(xay)$ and $xay \in S$, we have that the last set is disjoint from each of the first $k - 1$. Finally, the last two sets are disjoint by the definition of $C$. Hence, all $k + 1$ sets are pairwise disjoint.

    Since $a \in T_j$ there are $z_1, \dotsc, z_{j + 1} \in S$ such that $z_1 \dotsb z_{j + 1}a \in S$. Consider the $k + 1$ sets 
    \begin{equation*}
        S, \quad z_1 S, \quad z_1^2 S, \quad \dotsc, \quad z_1^{k - j}S, \quad z_1^{k - j} z_2 S, \quad \dotsc, \quad z_1^{k - j} z_2 \dotsb z_j S, \quad z_1^{k - j} z_2 \dotsb z_{j + 1} a y C.
    \end{equation*}
    The first $k$ of these sets are pairwise disjoint as $S$ is strongly $k$-product-free. Similarly, noting that $z_1 z_2 \dotsb z_{j + 1} a \in S$, the last set is disjoint from each of $ S$, $z_1S$, \ldots, $z_1^{k - j - 1}S$. Now, by the definition of $C$, $ayC \subset S$. Using this and product-freeness shows that the last set is disjoint from each of $z_1^{k - j}S$, $z_1^{k - j} z_2 S$, \ldots, $z_1^{k - j} z_2 \dotsb z_j S$. Hence, all $k + 1$ sets are pairwise disjoint. This contradicts \cref{lem:disjointtwo} and so the $T_i$ do partition $\cF$.

    It remains to show that $S \subset T_{k - 1}$. Since $S$ is strongly $k$-product-free, for any $x \in S$, the set $S$ is disjoint from each of $xS$, $xS^2$, \ldots, $xS^{k - 1}$ and so $x \notin T_0 \cup \dotsb \cup T_{k - 2}$. Since the $T_i$ partition $\cF$, we must have $x \in T_{k - 1}$, as required.
\end{proof}

Given these two results, we already know that $a \in T_i$ should be labelled by $-i \in \Zk$. Next, we want to show that the label of a product $a b$ should be the sum of the labels of $a$ and $b$. We begin by proving that this is true whenever we append a word from $S$.

\begin{proposition}\label{prop:TS}
    The following hold for all $j \in \Zk$.
    \begin{enumerate}[label = \textnormal{(\alph{*})}]
        \item If $ax \in T_j$ and $x \in S$, then $a \in T_{j + 1}$. \label{prop:TSa}
        \item $T_{j + 1}S \subset T_j$. \label{prop:TSb}
    \end{enumerate}
\end{proposition}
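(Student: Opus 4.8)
The plan is to prove part~\ref{prop:TSa} first; part~\ref{prop:TSb} is then immediate from \cref{prop:partition}. Indeed, given $a \in T_{j+1}$ and $x \in S$, let $i \in \Zk$ be the unique index with $ax \in T_i$ (\cref{prop:partition}); applying part~\ref{prop:TSa} to $ax \in T_i$ and $x \in S$ gives $a \in T_{i+1}$, so $a \in T_{i+1} \cap T_{j+1}$, and since the $T_\ell$ partition $\cF$ we get $i+1 = j+1$, whence $ax \in T_i = T_j$. So it suffices to establish part~\ref{prop:TSa}.

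For part~\ref{prop:TSa}, unpack $ax \in T_j$ as $a x s_1 \dotsb s_{j+1} \in S$ with $s_1, \dotsc, s_{j+1} \in S$; regrouping as $a \cdot (x s_1 \dotsb s_{j+1})$ with $x s_1 \dotsb s_{j+1} \in S^{j+2}$ shows $S \cap aS^{j+2} \notempty$. When $j \in \set{0, \dotsc, k-2}$ we have $j+1 \leq k-1$, so $j+2 = (j+1)+1$ is precisely the exponent in the definition of $T_{j+1}$, and we conclude $a \in T_{j+1}$ directly. The substantive case is $j = k-1$, where $T_{j+1} = T_0$: we know $S \cap aS^{k+1} \notempty$ and must deduce $S \cap aS \notempty$. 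By \cref{prop:partition}, $a$ lies in a unique $T_r$, and I plan to show $r = 0$ by contradiction. If $r \geq 1$ then there are $t_1, \dotsc, t_{r+1} \in S$ with $a t_1 \dotsb t_{r+1} \in S$, and there are $z_1, \dotsc, z_k \in S$ with $a x z_1 \dotsb z_k \in S$; the goal is to contradict \cref{lem:disjointtwo} with $n = k+1$ (legitimate since $\di(S) = \dsup(S) = 1/k > 2/(2k+1)$), mimicking the proof of \cref{prop:partition}. Concretely, set $C \coloneqq \set{s \in S \colon a t_1 \dotsb t_r s \in S}$ — a nonempty subset of $S$, as $t_{r+1} \in C$ — and build two nested families of $k+1$ sets, one terminating in a left-translate of $C$ and the other in a left-translate of $S \setminus C$, each padded at the front by powers of a fixed word from $S$ (and, where convenient, reshaping the relevant $S$-memberships using \cref{prop:swap}). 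Pairwise disjointness within each family should come from three sources: strong $k$-product-freeness of $S$, which already makes $w_1 S, w_1 w_2 S, \dotsc, w_1 \dotsb w_m S$ pairwise disjoint whenever $w_2, \dotsc, w_m \in S$ and $m \leq k$; the memberships $a t_1 \dotsb t_{r+1} \in S$ and $a x z_1 \dotsb z_k \in S$ together with the inclusion $(a t_1 \dotsb t_r)C \subseteq S$, which give the final set of each family enough additional ``$S$-depth'' to be disjoint from the intermediate sets; and the defining property of $C$, which separates the single remaining pair that is too far apart for the previous arguments. \cref{lem:disjointtwo} is then violated, forcing $r = 0$ and hence $a \in T_0 = T_{j+1}$.

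The main obstacle is exactly this wrap-around case $j = k-1$. The generic-case manipulation, applied here, only shows $a x z_1 \dotsb z_i \in T_{k-1-i}$ for $i = 0, 1, \dotsc, k$ and never reaches $a$ itself, so the disjointness lemmas are genuinely needed; what is being proved amounts to the statement that $\set{m \geq 1 \colon S \cap aS^m \notempty}$ lies in a single residue class modulo $k$. The delicate point is to choose the two nested families — where the padding powers go and which $S$-membership feeds which family — so that every pair is demonstrably disjoint, and this is the same balancing act that makes the $T_i$ pairwise disjoint in \cref{prop:partition}.
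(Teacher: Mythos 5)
Your reduction of \ref{prop:TSb} to \ref{prop:TSa} via \cref{prop:partition}, and your argument for \ref{prop:TSa} when $0 \leq j \leq k-2$, both match the paper. The gap is the wrap-around case $j = k-1$: you never actually exhibit the two families required by \cref{lem:disjointtwo} or verify their pairwise disjointness, and the natural implementation of your sketch does not close. Put the $(k+1)$-fold membership coming from $S \cap aS^{k+1} \notempty$ into prefix form via \cref{prop:swap}, say $u_1 \dotsb u_{k+1} a \in S$ with $u_i \in S$, and write $q = t_1 \dotsb t_r$, so $aqC \subset S$. In a family as in \cref{lem:disjointtwo} with $n = k+1$, if (as in \cref{prop:partition}) the pairwise disjointness of the first $k$ sets is to come from strong $k$-product-freeness, then each of $v_2, \dotsc, v_k$ must be a single element of $S$; since the prefix of the final set must contain all of $u_1 \dotsb u_{k+1} a q$ for the memberships to be usable, at least two of the $u$'s are absorbed into the last step, and the final set is (up to a common prefix) $u_1 \dotsb u_{k+1} a q C$. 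Its disjointness from the shallow sets $S$ and $u_1 S$ then requires forbidding $s' = u_2 \dotsb u_{k+1}(aqs) \in S$, a product of $k+1$ elements of $S$, which strong $k$-product-freeness does not forbid; the alternative grouping through $u_1 \dotsb u_{k+1} a \in S$ is unavailable once $u_1$ is missing, and even for the pair with $S$ it gives $r+2 = k+1$ factors when $r = k-1$. This is exactly why the counts in \cref{prop:partition} need the index $j$ to be at most $k-1$; your second membership has index $k$, and the ``balancing act'' you defer is precisely where the argument breaks. (You are in effect trying to prove the stronger statement that $\set{m \geq 1 \colon S \cap aS^m \notempty}$ lies in a single residue class mod $k$, which is more than is needed.)

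The paper's proof of this case is much lighter and uses only \cref{lem:disjoint} together with the already-proved partition: consider the $k+1$ sets $S, aS, axS, ax^2S, \dotsc, ax^{k-1}S$. The last $k$ are pairwise disjoint by strong $k$-product-freeness (as $x \in S$), and since $ax \in T_{k-1}$, \cref{prop:partition} gives $ax \notin T_0 \cup \dotsb \cup T_{k-2}$, so $S$ is disjoint from each $ax^iS$ for $1 \leq i \leq k-1$ (such an intersection would put $ax$ in $T_{i-1}$). By \cref{lem:disjoint} the $k+1$ sets cannot all be pairwise disjoint, so $S \cap aS \notempty$, i.e.\ $a \in T_0$, as required. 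Unless you can repair the two-family construction, the case $j = k-1$ in your write-up is unproved; I would replace it by this argument.
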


\begin{proof}
    We first prove \ref{prop:TSa}. Suppose that $0 \leq j \leq k - 2$. We have $S \cap axS^{j + 1} \notempty$ and $x \in S$, so $S \cap a S^{j + 2} \notempty$ and so $a \in T_{j + 1}$. 
    
    Now suppose that $j = k - 1$. Consider the sets $S$, $aS$, $axS$, $ax^2S$, \ldots, $ax^{k - 1}S$. By \cref{lem:disjoint}, these cannot all be pairwise disjoint. Since $S$ is strongly $k$-product-free and $x \in S$, the sets $aS$, $axS$, \ldots, $ax^{k - 1}S$ are pairwise disjoint. Also, as $ax \in T_{k - 1}$ (and so $ax$ is not in $T_0 \cup T_1 \cup \dotsb \cup T_{k - 2}$ by \cref{prop:partition}), $S$ is disjoint from each of $axS$, $ax^2S$, \ldots, $ax^{k - 1}S$. Thus $S$ and $aS$ are not disjoint and so $a \in T_0$, as required.

    We now prove \ref{prop:TSb}. Let $a \in T_{j + 1}$ and $x \in S$. Suppose that $ax \in T_i$ (such an $i$ exists by \cref{prop:partition}). By \ref{prop:TSa}, $i + 1 = j + 1 \bmod{k}$ and so $i = j \bmod{k}$, as required.
\end{proof}

It is now an easy consequence that the labels of all $T_i$ are very well-behaved with respect to products.

\begin{proposition}\label{prop:TT}
    For all $i, j \in \Zk$, $T_i T_j \subset T_{i + j}$.
\end{proposition}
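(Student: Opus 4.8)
The plan is to deduce this purely from the two one-step facts already established in \cref{prop:TS}: that $T_{m+1} S \subset T_m$, and that $cx \in T_m$ with $x \in S$ forces $c \in T_{m+1}$. The underlying point is that membership $b \in T_j$ is witnessed by a single equation $b y_1 \dotsb y_{j+1} \in S$ with every $y_t \in S$, and \cref{prop:TS} records exactly how the index of a word changes when one factor from $S$ is appended to, or stripped off, the right-hand end. So the proposition should come out by bookkeeping these index shifts.

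In detail, I would fix $a \in T_i$ and $b \in T_j$ and aim to show $ab \in T_{i + j}$, all index arithmetic being modulo $k$ as agreed. Since $b \in T_j$, the definition of $T_j$ supplies $y_1, \dotsc, y_{j + 1} \in S$ with $s \coloneqq b y_1 \dotsb y_{j + 1} \in S$. Then $ab y_1 \dotsb y_{j + 1} = a s$ with $a \in T_i$ and $s \in S$, so part~\ref{prop:TSb} of \cref{prop:TS} (in the form $T_i S \subset T_{i - 1}$) gives $ab y_1 \dotsb y_{j + 1} \in T_{i - 1}$. Now remove the factors $y_t$ one at a time from the right: part~\ref{prop:TSa} of \cref{prop:TS} turns $ab y_1 \dotsb y_{j + 1} \in T_{i - 1}$ into $ab y_1 \dotsb y_j \in T_i$, then into $ab y_1 \dotsb y_{j - 1} \in T_{i + 1}$, and after all $j + 1$ removals into $ab \in T_{(i - 1) + (j + 1)} = T_{i + j}$, as required.

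I do not expect a genuine obstacle here: essentially all the work has already gone into \cref{prop:swap,prop:partition,prop:TS}, and what remains is the short index computation above. The one point that wants care is that the witness for $b \in T_j$ has \emph{exactly} $j + 1$ factors from $S$, so that the single ``$-1$'' from appending $a$ via part~\ref{prop:TSb} and the ``$+(j + 1)$'' from stripping off the $y_t$ via part~\ref{prop:TSa} combine to the net shift $+j$; one could equally run the argument by prepending instead, using \cref{prop:swap}, but there is no need.
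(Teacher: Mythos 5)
Your proof is correct and is essentially identical to the paper's: both use the witness $b x_1 \dotsb x_{j+1} \in S$, apply \cref{prop:TS}\ref{prop:TSb} once to get $ab x_1 \dotsb x_{j+1} \in T_{i-1}$, and then apply \cref{prop:TS}\ref{prop:TSa} $j+1$ times to strip the $x_\ell$ and land in $T_{i+j}$.
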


\begin{proof}
    Let $a \in T_i$ and $b \in T_j$. As $b \in T_j$ there are $x_1, x_2, \dotsc, x_{j + 1} \in S$ such that $b x_1 x_2 \dotsb x_{j + 1} \in S$. By \cref{prop:TS}\ref{prop:TSb},
    \begin{equation*}
        abx_1 \dotsb x_{j + 1} = a(bx_1 \dotsb x_{j + 1}) \in T_{i - 1}.
    \end{equation*}
    Applying \cref{prop:TS}\ref{prop:TSa} $j + 1$ times, once to remove each $x_{\ell}$, gives $ab \in T_{i - 1 + (j + 1)} = T_{i + j}$, as required.
\end{proof}

Finally, this allows us to complete the proof of \cref{thm:skpfstructure}.

\begin{proof}[Proof of \cref{thm:skpfstructure}]
    For each letter $\alpha \in \cA$, there is, by \cref{prop:partition}, a unique $i \in \Zk$ such that $\alpha \in T_i$. Label $\alpha$ with $i$. By \cref{prop:TT}, for each $i$,
    \begin{equation*}
        T_i = \set{w \in \cF \colon \text{the sum of the labels of letters in $w$ is } i \bmod{k}}.
    \end{equation*}
    In particular, by \cref{prop:partition},
    \begin{equation*}
        S \subset T_{k - 1} = \set{w \in \cF \colon \text{the sum of the labels of letters in $w$ is } -1 \bmod{k}}.
    \end{equation*}
    Note that $T_{k - 1}$ is strongly $k$-product-free: if $w$ is the concatenation of $\ell$ words from $T_{k - 1}$, then the sum of the labels of letters in $w$ is $-\ell \bmod{k}$.
    
    To obtain the result given in the statement of \cref{thm:skpfstructure} (i.e.\ with $1 \bmod{k}$ instead of $-1 \bmod{k}$) simply multiply the label of each letter by $-1$.
\end{proof}

\section{Structure of \texorpdfstring{$k$}{k}-product-free sets}\label{sec:kpfstructure}

In this section we prove \cref{thm:kpfstructure} assuming \cref{thm:pfdensity}. Let $k \geq 2$ be an integer with $k \notin \set{3, 5, 7, 13}$, let $\rho = \rho(k)$, and let $S \subset \cF$ be $k$-product-free satisfying $\db(S) = 1/\rho$. Note, by \cref{thm:pfdensity}, that $\di(S) = 1/\rho = \dsup(S)$ and so we may and will frequently apply \cref{lem:disjoint} with $n = \rho + 1$.

We will show that, in fact, $S$ is strongly $\rho$-product-free and so the result follows from \cref{thm:skpfstructure}. To this end we make the following definition.

\begin{definition}\label{def:SA}
    For a set $A \subset \bZ^+$, the set \defn{$S_A$} $\subset \cF$ is
    \begin{equation*}
        S_A \coloneqq \bigcap_{i \in A} S^i,
    \end{equation*}
    where we will omit set parentheses so, for example, $S_1 = S$ and $S_{1, 3} = S \cap S^3$.
\end{definition}

Since $S$ is $k$-product-free, $S_{1, k} = \emptyset$. It is enough for us to show that $S_{1, 2} = S_{1, 3} = \dotsb = S_{1, \rho} = \emptyset$ as then $S$ is strongly $\rho$-product-free. Note that the case $k = 2$ is immediate and so we assume that $k \geq 3$ from now on.

We need a quick technical lemma about the size of $\rho$.

\begin{lemma}\label{lem:rhosize}
    Let $k \geq 3$ be an integer with $k \notin \set{3, 5, 7, 13}$ and let $\rho = \rho(k)$. Then
    \begin{equation}\label{eq:rho}
        k - 1 \geq \max \set{(\rho - t)t(t + 1) \colon t \in \set{1, 2, \dotsc, \rho - 1}}.
    \end{equation}
\end{lemma}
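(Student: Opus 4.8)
The plan is to use that, by minimality of $\rho = \rho(k)$, every integer in $\set{1, 2, \dotsc, \rho - 1}$ divides $k - 1$, so $k - 1$ is a multiple of $\operatorname{lcm}(1, \dotsc, \rho - 1)$; this already makes $k - 1$ far larger than the right-hand side of \eqref{eq:rho} once $\rho$ is not small. Write $M(\rho)$ for that right-hand side. Two elementary estimates carry most of the argument. First, for $\rho \ge 4$ the three consecutive integers $\rho - 1$, $\rho - 2$, $\rho - 3$ all divide $k - 1$; since $\rho - 2$ is coprime to each of $\rho - 1$ and $\rho - 3$ while $\gcd(\rho - 1, \rho - 3) \mid 2$, their least common multiple equals $(\rho - 2) \cdot \operatorname{lcm}(\rho - 1, \rho - 3) \ge (\rho - 1)(\rho - 2)(\rho - 3)/2$, whence $k - 1 \ge (\rho - 1)(\rho - 2)(\rho - 3)/2$. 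Second, $M(\rho) \le \rho^3/4$: for each $t \in \set{1, \dotsc, \rho - 1}$ the AM--GM inequality gives $(\rho - t)t \le \rho^2/4$, and $t + 1 \le \rho$.

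Given these, I would split on the size of $\rho$. For $\rho \ge 10$ the two estimates combine immediately, since $(\rho - 1)(\rho - 2)(\rho - 3) \ge \tfrac{9}{10} \cdot \tfrac{8}{10} \cdot \tfrac{7}{10}\, \rho^3 > \tfrac12 \rho^3$ and therefore
\begin{equation*}
    k - 1 \ge \frac{(\rho - 1)(\rho - 2)(\rho - 3)}{2} > \frac{\rho^3}{4} \ge M(\rho).
\end{equation*}
For $\rho \in \set{7, 8, 9}$ the first estimate still suffices but must be checked numerically: $(\rho - 1)(\rho - 2)(\rho - 3)/2$ equals $60$, $105$, $168$, whereas $M(\rho)$ equals $60$, $90$, $126$. (The value $\rho = 6$ cannot occur: $1, \dotsc, 5$ dividing $k - 1$ forces $60 \mid k - 1$, hence $6 \mid k - 1$.)

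It remains to treat $\rho \in \set{2, 3, 4, 5}$, and here the hypothesis $k \notin \set{3, 5, 7, 13}$ enters through a short finite check. Each of these values of $\rho$ confines $k - 1$ to an explicit set: $\rho = 2$ forces $k$ even, so $k - 1 \ge 3$; $\rho = 3$ forces $k - 1$ even and not divisible by $3$, so $k - 1 \in \set{2, 4, 8, 10, \dotsc}$; $\rho = 4$ forces $6 \mid k - 1$ and $4 \nmid k - 1$, so $k - 1 \in \set{6, 18, 30, \dotsc}$; $\rho = 5$ forces $12 \mid k - 1$ and $5 \nmid k - 1$, so $k - 1 \in \set{12, 24, 36, \dotsc}$. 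Deleting $k \in \set{3, 5, 7, 13}$ removes exactly the values with $k - 1 < M(\rho)$ — namely $k - 1 \in \set{2, 4}$ for $\rho = 3$, $k - 1 = 6$ for $\rho = 4$, and $k - 1 = 12$ for $\rho = 5$ — leaving $k - 1 \ge 3, 8, 18, 24$ respectively, each at least the corresponding $M(\rho) = 2, 6, 12, 24$.

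The only place demanding care is the bookkeeping in this last step: one must verify that $\set{3, 5, 7, 13}$ is precisely the set of $k \ge 3$ for which \eqref{eq:rho} fails, and in particular that $k = 7$ and $k = 13$ belong to the $\rho = 4$ and $\rho = 5$ families rather than the $\rho = 3$ family (indeed $\rho(7) = 4$ and $\rho(13) = 5$). Everything else is routine arithmetic, so I expect no genuine obstacle.
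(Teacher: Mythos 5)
Your proof is correct, but it takes a genuinely different route from the paper in the large-$\rho$ regime. The paper bounds the right-hand side by $\tfrac{4}{27}(\rho + \tfrac12)^3$ via AM--GM and then invokes Lev's estimate $\rho(k) \leq 2\log_2 k + 2$ to dispose of all $k \geq 2400$, reducing $\rho \geq 10$ to this via $k - 1 \geq 5\cdot 7\cdot 8\cdot 9 = 2520$, before enumerating $\rho = 2, \dotsc, 9$ one by one using the smallest admissible $k$ for each $\rho$. You avoid the external citation entirely: since $\rho - 1$, $\rho - 2$, $\rho - 3$ all divide $k - 1$ and pairwise share at most a factor of $2$, you get the self-contained bound $k - 1 \geq \operatorname{lcm}(\rho-1,\rho-2,\rho-3) \geq (\rho-1)(\rho-2)(\rho-3)/2$, which beats your bound $M(\rho) \leq \rho^3/4$ outright for $\rho \geq 10$ and numerically for $\rho \in \set{7,8,9}$ (values $60, 105, 168$ against $60, 90, 126$); the remaining cases $\rho \leq 5$ are handled by the same kind of finite check as in the paper, and your bookkeeping there is accurate ($\rho(7) = 4$, $\rho(13) = 5$, and the excluded $k$ are exactly those with $k - 1 < M(\rho)$). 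What your approach buys is a fully elementary, citation-free proof with a cleaner split ($\rho \geq 7$ versus $\rho \leq 5$); what the paper's buys is essentially nothing extra here beyond reusing a known asymptotic bound on $\rho(k)$, so your version is arguably tidier for this particular lemma.
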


\begin{proof}
    By the  arithmetic mean-geometric mean inequality, for any $t \in [0, \rho]$,
    \begin{equation*}
        (\rho - t) t (t + 1) = 4 (\rho - t)\tfrac{t}{2} \tfrac{t + 1}{2} \leq 4 \bigl(\tfrac{\rho + 1/2}{3}\bigr)^3 = 4/27 \cdot (\rho + 1/2)^3.
    \end{equation*}
    On the other hand, Lev~\cite[Lem.~18]{Lev03} proved that, for all positive integers $k \geq 2$,
    \begin{equation*}
        \rho(k) \leq 2 \log_2 k + 2.
    \end{equation*}
    Now, for all $k \geq 2400$,
    \begin{equation*}
        k - 1 \geq 4/27 \cdot (2 \log_2 k + 5/2)^3,
    \end{equation*}
    and so \eqref{eq:rho} holds. Now, if $\rho \geq 10$, then $k - 1 \geq 5 \times 7 \times 8 \times 9 = 2520$ and so \eqref{eq:rho} holds. We are left to check the remaining cases.
    \begin{itemize}[noitemsep]
        \item If $\rho = 2$, then the right-hand side of \eqref{eq:rho} is 2. The smallest $k \geq 3$ with $\rho = 2$ is 4.
        \item If $\rho = 3$, then the right-hand side of \eqref{eq:rho} is 6. The only $k \leq 6$ with $\rho = 3$ are 3 and 5.
        \item If $\rho = 4$, then the right-hand side of \eqref{eq:rho} is 12. The only $k \leq 12$ with $\rho = 4$ is 7.
        \item If $\rho = 5$, then the right-hand side of \eqref{eq:rho} is 24. The only $k \leq 24$ with $\rho = 5$ is 13.
        \item $\rho$ is always the power of a prime so there are no $k$ with $\rho = 6$.
        \item If $\rho = 7$, then the right-hand side of \eqref{eq:rho} is 60. The smallest $k$ with $\rho = 7$ is $61$.
        \item If $\rho = 8$, then the right-hand side of \eqref{eq:rho} is 90. The smallest $k$ with $\rho = 8$ is $421$.
        \item If $\rho = 9$, then the right-hand side of \eqref{eq:rho} is 126. The smallest $k$ with $\rho = 9$ is 841.\qedhere
    \end{itemize}
\end{proof}

We first show that $S_{1, \rho}$ is empty.

\begin{proposition}\label{prop:S1rempty}
    $S_{1, \rho} = \emptyset$.
\end{proposition}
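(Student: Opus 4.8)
The plan is to argue by contradiction. Suppose $S_{1,\rho} \neq \emptyset$ and fix $x \in S \cap S^\rho$ together with a factorisation $x = s_1 s_2 \dotsb s_\rho$ with each $s_i \in S$. Since $\di(S) = \dsup(S) = 1/\rho > 1/(\rho + 1)$, it suffices to produce words $w_1, \dotsc, w_{\rho + 1} \in \cF$ for which the nested sets $w_1 S, w_1 w_2 S, \dotsc, w_1 \dotsb w_{\rho + 1} S$ are pairwise disjoint, as this contradicts \cref{lem:disjoint}. The point of departure is the elementary fact (left cancellation in the free semigroup) that $w_1 \dotsb w_i S$ and $w_1 \dotsb w_j S$ with $i < j$ are disjoint as soon as $S \cap (w_{i + 1} \dotsb w_j) S = \emptyset$, and that this last equality holds whenever the ``gap word'' $w_{i + 1} \dotsb w_j$ lies in $S^{k - 1}$, because then $(w_{i + 1} \dotsb w_j) S \subseteq S^k$ is disjoint from $S$ by $k$-product-freeness.

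The useful feature of the factorisation $x = s_1 \dotsb s_\rho$ is the following flexibility: a word of the shape $x^a s_1 s_2 \dotsb s_b$ (with $a \ge 0$ and $0 \le b < \rho$) lies in $S^m$ for \emph{every} $m$ with $a + b \le m \le a\rho + b$ and $m \equiv a + b \pmod{\rho - 1}$ — one just expands a chosen sub-collection of the $a$ copies of $x$ into their length-$\rho$ factorisations and keeps the rest as single letters of $S$. (Here one uses that $\rho - 1$ divides $k - 1$, which holds since $1, \dotsc, \rho - 1$ all divide $k - 1$, while $\rho$ does not.) Consequently any gap word built from copies of $x$ and from the letters $s_1, \dotsc, s_\rho$ has an entire arithmetic progression of admissible lengths, and a gap word straddling a boundary between two of the $w_\ell$ can re-merge its fringe letters into fresh copies of $x$, enlarging that progression further. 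The problem thus reduces to choosing the exponents and the partial factors defining $w_1, \dotsc, w_{\rho + 1}$ so that $k - 1$ lands in the relevant progression for all $\binom{\rho+1}{2}$ pairs $i < j$ simultaneously.

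The genuine obstacle is precisely this final balancing act, and it is where \cref{lem:rhosize} enters. A naive choice — taking every $w_\ell$ to be a power of $x$ — fails: the $\rho$ gap words $w_2, w_2 w_3, \dotsc, w_2 \dotsb w_{\rho + 1}$ would then be forced to have lengths that are multiples of $\rho - 1$ lying in $[(k - 1)/\rho, k - 1]$, whereas $\rho$ pairwise-disjoint nested translates require $\rho$ increments each of size at least $(k-1)/\rho$ and summing to at most $k - 1$; this is impossible exactly because $\rho \nmid k - 1$. The remedy is to interleave the letters $s_i$ among the $w_\ell$ so that enough gap words acquire additional admissible lengths, and the bound $k - 1 \ge (\rho - t)t(t + 1)$ for $t \in \{1, \dotsc, \rho - 1\}$ is what certifies that there is simultaneously enough room across all the gap constraints (the excluded values $k \in \{3, 5, 7, 13\}$ being exactly those for which this room runs out, cf.\ the proof of \cref{lem:rhosize}). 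Once suitable $w_1, \dotsc, w_{\rho + 1}$ are in hand, \cref{lem:disjoint} is violated and \cref{prop:S1rempty} follows.
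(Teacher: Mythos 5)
Your setup is the same as the paper's: assume $S_{1,\rho} \notempty$, build $\rho+1$ nested translates whose pairwise disjointness contradicts \cref{lem:disjoint}, and obtain disjointness of a pair by showing the gap word lies in $S^{k-1}$ (so that gap$\cdot S \subset S^k$ is disjoint from $S$). Your diagnosis of why the naive all-powers-of-$x$ choice fails is also correct. But the proof stops exactly where the work begins: you never construct the words $w_1,\dotsc,w_{\rho+1}$, never verify the $\binom{\rho+1}{2}$ simultaneous requirements (for each gap, a congruence mod $\rho-1$ together with the two-sided size condition placing $k-1$ in its admissible progression), and never show how \cref{lem:rhosize} actually enters the counting — you simply assert that "interleaving the $s_i$" plus that bound "certifies there is enough room." That balancing act is the entire content of the proposition, and it is not routine: for instance, the natural uniform interleaving (all blocks $x^a$ with the same split position at every internal boundary) already violates the congruence condition for gaps ending at the last block, so some genuinely non-obvious design is needed, and nothing in your sketch supplies it. (Also, the instance of \cref{lem:rhosize} relevant here is only $t=\rho-1$, i.e.\ $k-1 \ge \rho(\rho-1)$; the full bound $(\rho-t)t(t+1)$ is what the later proposition about $S_{1,d+1}$ needs.)

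It is also worth noting that the paper's actual resolution is different from, and stronger than, the remedy you gesture at, which is one reason your constraints may be hard to satisfy. The paper does not fix a single factorisation of one element $x$: it takes $t$ maximal with $S_{1,\rho,2\rho-1,\dotsc,t(\rho-1)+1} \notempty$ and picks $w$ in that intersection. This maximality buys two things your framework lacks: short gaps equal to $w^{\rho-1}$ are handled directly by the maximality of $t$ (if $S \cap w^{\rho-1}S \notempty$ the progression could be extended), so they need not lie in $S^{k-1}$ at all; and the extra memberships $w \in S^{2\rho-1}, \dotsc, S^{t(\rho-1)+1}$ lengthen the admissible progressions of the long gaps so that $k-1$ is captured, with a two-case analysis according to whether $k-1 > 2\rho(\rho-1)$. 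In your version every gap, including the short consecutive ones, must be forced into $S^{k-1}$ using only the single factorisation $x = s_1\dotsb s_\rho$, which is a strictly harder system of constraints; whether it is solvable for all $k \ge 2$ is exactly what you would have to prove, and the proposal does not do so.
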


\begin{proof}
    Suppose that $S_{1, \rho} \notempty$ and let $t \in \bZ^+$ be maximal with $S_{1, \rho, 2 \rho - 1, \dotsc, t(\rho - 1) + 1} \notempty$ where the indices form an arithmetic progression with common difference $\rho - 1$. Such a $t$ must exist as $k \equiv 1 \bmod{\rho - 1}$ and $S_{1, k} = \emptyset$. Let $w \in S_{1, \rho, 2 \rho - 1, \dotsc, t(\rho - 1) + 1}$.

    Taking $t = \rho - 1$ inside the maximum in \eqref{eq:rho}, we have $k - 1 \geq \rho(\rho - 1)$. We split into two cases based on the size of $k$.

    First suppose that $k - 1 > 2 \rho(\rho - 1)$. Let $\alpha \in \bZ^+$ be minimal such that $(\alpha - 1) \rho (\rho - 1) \geq k - 1$. Note that $\alpha \geq 4$. Write $\rho = 2a + b$ where $a = \floor{\rho /2}$ and $b \in \set{0, 1}$. Consider the following sets
    \begin{align*}
        & S, \quad w^{\rho - 1} S, \qquad w^{\alpha(\rho - 1)} S, \quad w^{(\alpha + 1)(\rho - 1)} S, \qquad w^{2 \alpha(\rho - 1)}, \quad w^{(2 \alpha + 1)(\rho - 1)} S \quad \dotsc, \\
        & w^{(a - 1) \alpha(\rho - 1)} S, \quad w^{((a - 1)\alpha + 1)(\rho - 1)}S, \qquad w^{a\alpha(\rho - 1)} S, \quad w^{(a \alpha + b)(\rho - 1)} S.
    \end{align*}
    We remark that these sets are formed by starting with $S$ and then alternating between prepending $w^{\rho - 1}$ and $w^{(\alpha - 1)(\rho - 1)}$. Two sets that differ only be a prepending of $w^{\rho - 1}$ are called a \defn{pair}: the pairs are the first and second sets; the third and fourth sets; \ldots. The number of sets listed is $2 a + b + 1 = \rho + 1$ and so these cannot all be pairwise disjoint by \cref{lem:disjoint}. 
    
    We first show that sets in different pairs are disjoint. If two such sets meet, then $S \cap w^{\ell(\rho - 1)}S \notempty$ for some integer $\ell$ satisfying $\alpha - 1 \leq \ell \leq a \alpha + b$. We will show that, for such an $\ell$, $w^{\ell(\rho - 1)} \in S^{k - 1}$ which contradicts $S_{1, k} = \emptyset$. Since $w \in S_{1, \rho, 2 \rho - 1, \dotsc, t(\rho - 1) + 1}$, we have $w^{\ell(\rho - 1)} \in S_{\ell(\rho - 1), (\ell + 1)(\rho - 1), \dotsc, \ell(\rho - 1)(t(\rho - 1) + 1)}$ where the indices form an arithmetic progression with common difference $\rho - 1$. It suffices to show that $k - 1$ is in this arithmetic progression. Since $k - 1$ is a multiple of $\rho - 1$, it is enough to show that $\ell(\rho - 1) \leq k - 1 \leq \ell(\rho - 1)(t(\rho - 1) + 1)$ for all integers $\ell$ satisfying $\alpha - 1 \leq \ell \leq a \alpha + b$. Now,
    \begin{equation*}
        \ell(\rho - 1)(t(\rho - 1) + 1) \geq \ell(\rho - 1) \rho \geq (\alpha - 1) \rho(\rho - 1) \geq k - 1
    \end{equation*}
    and
    \begin{align*}
        \ell(\rho - 1) & \leq (a \alpha + b)(\rho - 1) = (a \alpha + \rho - 2a)(\rho - 1) \\
        & = a(\alpha - 2)(\rho - 1) + \rho(\rho - 1) \leq \rho/2 \cdot (\alpha - 2)(\rho - 1) + \rho(\rho - 1) \\
        & = \alpha/2 \cdot \rho(\rho - 1) \leq (\alpha - 2) \rho(\rho - 1) < k - 1,
    \end{align*}
    where we used the minimality of $\alpha$ and the fact that $\alpha \geq 4$ in the final and penultimate inequality respectively.

    We second show that sets in the same pair are disjoint which gives the contradiction required to conclude the case $k - 1 > 2 \rho (\rho - 1)$. If two sets in the same pair are not disjoint, then $S \cap w^{\rho - 1}S \notempty$. But $w^{\rho - 1} \in S_{\rho - 1, 2(\rho - 1), \dotsc, (t(\rho - 1) + 1)(\rho - 1)}$ and so if $S \cap w^{\rho - 1}S \notempty$, then $S_{1, \rho, 2\rho - 1, \dotsc, (t(\rho - 1) + 1)(\rho - 1) + 1} \notempty$ which contradicts the maximality of $t$.

    Second suppose that $2\rho(\rho - 1) \geq k - 1 \geq \rho(\rho - 1)$. Consider the the following $\rho + 1$ sets
    \begin{equation*}
        S, \quad w^{\rho - 1}S, \quad w^{2(\rho - 1)}S, \quad \dotsc, \quad w^{\rho(\rho - 1)}S.
    \end{equation*}
    Since $t$ is maximal, consecutive sets are disjoint as in the previous case. If non-consecutive sets are not disjoint, then $S \cap w^{\ell(\rho - 1)}S \notempty$ for some integer $\ell$ satisfying $2 \leq \ell \leq \rho$. As before, $w^{\ell(\rho - 1)} \in S_{\ell(\rho - 1), (\ell + 1)(\rho - 1), \dotsc, \ell(\rho - 1)(t(\rho - 1) + 1)}$ and so it suffices to show that $\ell(\rho - 1) \leq k - 1 \leq \ell(\rho - 1)(t(\rho - 1) + 1)$ for such $\ell$. This is the case as
    \begin{equation*}
        \ell(\rho - 1)(t(\rho - 1) + 1) \geq \ell(\rho - 1)\rho \geq 2 \rho (\rho - 1) \geq k - 1
    \end{equation*}
    and
    \begin{equation*}
        \ell(\rho - 1) \leq \rho(\rho - 1) \leq k - 1.
    \end{equation*}
    Thus all $\rho + 1$ sets are disjoint contradicting \cref{lem:disjoint} and so $S_{1, \rho}$ is indeed empty.
\end{proof}

We now show that $S_{1, 2}$, \ldots, $S_{1, \rho - 1}$ are all empty.

\begin{proposition}\label{prop:S1dempty}
    For all $1 \leq d \leq \rho - 1$, $S_{1, d + 1} = \emptyset$.
\end{proposition}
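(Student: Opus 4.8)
The plan is to argue by downward induction on $d$. The base case $d = \rho - 1$ is exactly \cref{prop:S1rempty}, so I would take $1 \le d \le \rho - 2$, assume $S_{1, e} = \emptyset$ for every $e \in \set{d+2, \dotsc, \rho}$, and aim to deduce $S_{1, d+1} = \emptyset$. If not, then, exactly as at the start of the proof of \cref{prop:S1rempty}, let $t \ge 1$ be maximal with $S_{1, d+1, 2d+1, \dotsc, td+1} \notempty$, the indices forming an arithmetic progression with common difference $d$. Such a $t$ exists and is finite: $t \ge 1$ by assumption, and since $d \mid k - 1$ (as $d < \rho = \rho(k)$), once the progression reaches the index $k$ the intersection lies inside $S_{1, k} = \emptyset$. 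Fix $w \in S_{1, d+1, 2d+1, \dotsc, td+1}$, so $w \in S^{jd+1}$ for each $j \in \set{0, 1, \dotsc, t}$.

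Expanding each of the $j$ factors of a power $w^j$ as an element of $S^{j_i d + 1}$ with $j_i \in \set{0, \dotsc, t}$ shows that $w^j \in S^m$ whenever $m \equiv j \pmod{d}$ and $j \le m \le j(td+1)$. I would use this to read off which ``gaps'' $g$ are \emph{safe}, meaning $w^a S \cap w^{a+g} S = \emptyset$ for every $a \in \ZNN$: (i) $g = d$ is safe, since $S \cap w^d S \notempty$ would give $S_{1, d+1, 2d+1, \dotsc, d(td+1)+1} \notempty$, a progression with common difference $d$ strictly longer than the one defining $t$; (ii) every $g \in \set{d+1, \dotsc, \rho-1}$ is safe, since $S \cap w^g S \notempty$ would give $S_{1, g+1} \notempty$ with $g+1 \in \set{d+2, \dotsc, \rho}$, against the inductive hypothesis; and (iii) every multiple of $d$ in the interval $[(k-1)/(td+1),\, k-1]$ is safe, since for such $g$ one has $w^g \in S^{k-1}$ (here $d \mid k-1$ is used), so $w^g s \in S^k$ for any $s \in S$, which is impossible as $S$ is $k$-product-free. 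Let $G$ be the union of these three families of gaps.

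It then suffices to produce integers $0 = b_0 < b_1 < \dotsb < b_\rho$ with $b_j - b_i \in G$ for all $i < j$: the $\rho + 1$ sets $S = w^{b_0} S,\ w^{b_1} S,\ \dotsc,\ w^{b_\rho} S$ would be pairwise disjoint, contradicting \cref{lem:disjoint} (with $n = \rho+1$) and completing the induction. I would build the $b_i$ out of ``blocks'': inside a block one repeatedly prepends $w^d$, and a block of at most $p + 1$ sets, where $p \coloneqq \floor{(\rho-1)/d}$, has all internal gaps in $\set{d, 2d, \dotsc, pd} \subseteq \set{d, \dotsc, \rho-1}$; one packs the $\rho+1$ sets into $g \coloneqq \ceil{(\rho+1)/(p+1)}$ blocks and separates consecutive blocks by a prepending of $w^L$ for a well-chosen multiple $L$ of $d$ with $L \ge (k-1)/(td+1)$, so that every inter-block gap is a multiple of $d$ lying in $[L,\, k-1]$. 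The one inequality that needs checking is that the largest gap, the total span $(\rho+1-g)d + (g-1)L = \rho d + (g-1)(L-d)$, is at most $k - 1$. As in \cref{prop:S1rempty} I expect to split into a ``small $k$'' regime, where one instead uses the uniform chain $S, w^d S, w^{2d} S, \dotsc, w^{\rho d} S$ (all of whose gaps lie in $G$), and a ``large $k$'' regime, where one takes $L$ minimal and combines the bound $g \le d + 1$ with \cref{lem:rhosize}. The case $d = 1$ is easier, since there $G \supseteq \set{1, \dotsc, \rho-1}$ and the chain $0, 1, \dotsc, \rho - 1, k-1$ already works.

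The hard part will be this last piece of arithmetic: verifying, uniformly over all admissible $d$, $t$, and $k$, that $G$ is rich enough to support $\rho + 1$ points with all pairwise differences inside it — this is precisely where the quantitative bound of \cref{lem:rhosize}, and hence the exclusion $k \notin \set{3,5,7,13}$, is needed. Everything else — the induction, the maximal-$t$ device, the computation of the memberships $w^j \in S^m$, and the extraction of $G$ — is a routine reworking of the tools already developed for \cref{prop:S1rempty}. This completes the induction; together with \cref{prop:S1rempty} it gives $S_{1,2} = \dotsb = S_{1,\rho} = \emptyset$, so $S$ is strongly $\rho$-product-free and \cref{thm:kpfstructure} follows from \cref{thm:skpfstructure}.
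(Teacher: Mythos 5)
Your proposal is correct and follows essentially the same route as the paper: downward induction with \cref{prop:S1rempty} as the base case, the maximal-$t$ device, the same three families of safe gaps, and a block construction (runs of $w^d$-steps separated by jumps that are multiples of $d$ of size roughly $(k-1)/(d+1)$) whose pairwise disjointness contradicts \cref{lem:disjoint} with $n = \rho + 1$. The arithmetic you defer does close exactly as you predict: since $d, d+1 \le \rho - 1$ are coprime divisors of $k-1$ one may write $k - 1 = \alpha d(d+1)$, the minimal admissible jump is at most $\alpha d$ (using $t \ge 1$), so with $g \le d + 1$ the span is at most $\rho d + (\alpha - 1)d^2$, and the required bound reduces to $\rho \le \alpha + d$, which is what \cref{lem:rhosize} supplies — precisely the computation in the paper's proof.
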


\begin{proof}
    We argue via downwards induction on $d$ with the base case $d = \rho - 1$ given by \cref{prop:S1rempty}. Let $1 \leq d \leq \rho - 2$ be largest with $S_{1, d + 1} \notempty$ and let $t \in \bZ^+$ be maximal with $S_{1, d + 1, 2d + 1, \dotsc, td + 1} \notempty$. Such a $t$ exists as $k \equiv 1 \bmod{d}$. Let $w \in S_{1, d + 1, 2d + 1, \dotsc, td + 1}$.
    
    By the definition of $\rho$, both $d$ and $d + 1$ divide $k - 1$. Since $d$ and $d + 1$ are coprime we may write $k - 1 = \alpha d(d + 1)$ for some positive integer $\alpha$. Let $s \in \bZ^+$ be largest such that $ds \leq \rho - 1$. Write $\rho = a(s + 1) + b$ where $a = \floor{\rho/(s + 1)}$ and $b \in \set{0, 1, \dotsc, s}$. Consider the following $\rho + 1$ sets
    \begin{align*}
        & S, \quad w^d S, \quad \dotsc, \quad w^{sd} S, \qquad \qquad \qquad w^{(\alpha + s)d} S, \quad w^{(\alpha + s)d + d} S, \quad \dotsc, \quad w^{(\alpha + s)d + sd} S, \\
        & w^{2(\alpha + s)d} S, \quad \dotsc, \quad w^{2(\alpha + s)d + sd} S, \qquad \dotsc, \qquad w^{a(\alpha + s)d} S, \quad \dotsc, \quad w^{a(\alpha + s)d + bd} S.
    \end{align*}
    We remark that these sets are formed by starting with $S$, then prepending $w^d$ $s$ times, prepending $w^{\alpha d}$, then prepending $w^d$ $s$ times, prepending $w^{\alpha d}$, and so on. We group up the sets: the $1^{\text{st}}$ through $d^{\text{th}}$ sets are in the first group; the $(d + 1)^{\text{th}}$ through $(2d)^{\text{th}}$ sets are in the second group; and so on. 
    
    We first show that sets in different groups are disjoint. If two such sets meet, then $S \cap w^{\ell d} S \notempty$ for some integer $\ell$ satisfying $\alpha \leq \ell \leq a(\alpha + s) + b$. Now $w^{\ell d} \in S_{\ell d, (\ell + 1) d, \dotsc, \ell d(td + 1)}$ and so, since $k - 1$ is a multiple of $d$, it suffices to show that $\ell d \leq k - 1 \leq \ell d(td + 1)$ for all such $\ell$. Firstly,
    \begin{equation*}
        \ell d(td + 1) \geq \alpha d(d + 1) = k - 1.
    \end{equation*}
    Now,
    \begin{align*}
        \ell d & \leq (a(\alpha + s) + b) d = (a(\alpha + s) + \rho - a(s + 1))d \\
        & = (a(\alpha - 1) + \rho)d
    \end{align*}
    and we wish to show this is at most $k - 1 = \alpha d (d + 1)$ and so it is enough to show that $a(\alpha - 1) + \rho \leq \alpha(d + 1)$. By the maximality of $s$, $d(s + 1) \ge \rho$ and so $d \ge \rho/(s + 1) \geq a$. Hence, it suffices to show that $d(\alpha - 1) + \rho \leq \alpha(d + 1)$, or equivalently $\rho \leq \alpha + d$. But, by \cref{lem:rhosize},
    \begin{align*}
        \alpha d (d + 1) & = k - 1 \geq \max\set{(\rho - t)t(t + 1) \colon t \in \set{1, 2, \dotsc, \rho - 1}} \\
        & \geq (\rho - d) d (d + 1),
    \end{align*}
    and so we do indeed have $\rho \leq \alpha + d$.

    Next we show that sets in the same group are disjoint. If two consecutive sets in the same group meet, then $S \cap w^d S \notempty$. But $w^d \in S_{d, 2d, \dotsc, (td + 1)d}$ and so if $S \cap w^d S \notempty$, then $S_{1, d + 1, 2d + 1, \dotsc, (td + 1)d + 1} \notempty$ which contradicts the maximality of $t$. If two non-consecutive sets in the same group meet, then $S \cap w^{\ell d}S \notempty$ for some integer $\ell$ with $2 \leq \ell \leq s$. But $w^{\ell d} \in S_{\ell d}$ and so $S_{1, \ell d + 1} \notempty$. However, $d < 2d \leq \ell d \leq ds \leq \rho - 1$ and so this contradicts the maximality of $d$. 

    Hence, all $\rho + 1$ sets are pairwise disjoint which contradicts \cref{lem:disjoint}, as required.
\end{proof}

\Cref{prop:S1rempty,prop:S1dempty} together show that $S$ is strongly $\rho$-product-free. \Cref{thm:kpfstructure} then follows from \cref{thm:skpfstructure}.

\section{Steeplechases}\label{sec:steeple}

In this section, we develop some results which will be used in the next section to bound the density of a (strongly) $k$-product-free set $S$ and so prove \cref{thm:pfdensity}. To motivate our approach, assume that $S$ is strongly 3-product-free. To bound the density of $S$, we might hope that $\di(S) = \di(S^2) = \di(S^3)$. Because all of these sets are disjoint, this would imply that $\di(S \cup S^2 \cup S^3) = 3 \cdot \di(S)$ and so $\di(S) \le 1/3$, as required.

If $S$ is evenly distributed, such an argument works. Indeed, note that for all $w \in S$ we have $\di_{w\cF}(S^2) \ge \di_{w\cF}(wS) = \di(S)$, so the relative density of $S^2$ in $S \cF$ is at least $\di(S)$. If $S \cF$ covers all of $\cF$, this implies that $\di(S^2) \ge \di(S)$, and so $\di(S \cup S^2) \ge 2 \cdot \di(S)$. To include $S^3$ in the union, we can just repeat the argument. For $w \in S$ we have $\di_{w\cF}(S^2 \cup S^3) \ge \di_{w\cF}(w (S \cup S^2)) = \di(S \cup S^2) \ge 2 \cdot \di(S)$, giving $\di(S^2 \cup S^3) \ge 2 \cdot \di(S)$ and thus $\di(S \cup S^2 \cup S^3) \ge 3 \cdot \di(S)$, as required.

If $S$ is not evenly distributed, we want to ignore the part of $\cF$ where $S$ has a very low density. In the rest, the density of $S$ should be at least $\di(S)$ and $S$ should be somewhat evenly distributed. Within this part, we then want to show that $S \cup S^2$ has density $2 \cdot \di(S)$ and $S \cup S^2 \cup S^3$ has density $3 \cdot \di(S)$ to again obtain the sought result.

While the density of $S \cup S^2$ could be computed as before, this no longer works for $S \cup S^2 \cup S^3$. We only know that $S \cup S^2$ has a high density within a part of $\cF$, for example $\di_{v\cF}(S \cup S^2) \ge 2 \cdot \di(S)$ for some $v \in \cF$. This does not suffice to get a lower bound on $\di_{w\cF}(S^2 \cup S^3)$ in the calculation above.

Instead, note that $\di_{wv\cF}(S^2 \cup S^3) \ge \di_{wv\cF}(w (S \cup S^2)) = \di_{v\cF}(S \cup S^2) \ge 2 \cdot \di(S)$ which tells us that the relative density of $S^2 \cup S^3$ in $S v \cF$ is at least $2 \cdot \di(S)$. If we could now show that $S v \cF$ covers essentially all of $S \cF$, this would imply that $S^2 \cup S^3$ has density at least $2 \cdot \di(S)$ in $S \cF$ which in turn would suffice to show that $\di(S) \le 1/3$.

The technical arguments in this section are mostly devoted to showing that this is true, at least up to some small error. The idea is that we partition $S$ into prefix-free sets $(C_k)$ such that $C_{k+1} \subset C_k \cF$. At some point, the measure of $C_k$ will no longer drop. This means that $C_{k+1} \cF$ covers almost all subtrees of $C_k \cF$.

Now, $C_k v \cF$ will cover a fraction of size $\abs{\cA}^{-\abs{v}}$ of $C_k \cF$. We also know that all uncovered subtrees are covered by $C_{k+1} \cF$. So, $C_{k+1} v \cF$ will cover a fraction of size $\abs{\cA}^{-\abs{v}}$ of the still uncovered subtrees of $C_k \cF$, and the remaining subtrees are covered by $C_{k+2} \cF$. By repeating this argument with $C_{k+2} v \cF, C_{k+3} v \cF, \dots$, we can eventually cover almost all of $C_k \cF$ with $\bigcup_{\ell \ge k} C_\ell v \cF$. By deleting the first few layers of our partition of $S$, we therefore get that $S \cF$ is covered by $S v \cF$, which is what we need.

This motivate the following definition.

\begin{definition}[steeplechase]
    An infinite sequence $(C_k)$ of subsets of $\cF$ is a \defn{steeplechase} if, for each positive integer $k$,
    \begin{itemize}
        \item each $C_k$ is prefix-free and finite,
        \item every word in $C_{k + 1}$ has a proper prefix in $C_k$ (in particular, $C_{k + 1} \cF \subset C_k \cF$).
    \end{itemize}
    Steeplechase $(C_k)$ is \defn{spread} if $\max C_k < \min C_{k + 1}$ for all $k$ and is \defn{$\epsilon$-tight} if, for all $m, n$, $\abs{\mu(C_m) - \mu(C_n)} \leq \epsilon$.
\end{definition}

Every steeplechase contains a spread steeplechase. Indeed, note that $\min C_k \geq k$, since every word in $C_k$ has a proper prefix in $C_{k - 1}$. Let $\ell_1 = \max C_1$. Then $\min C_{\ell_1 + 1} > \ell_1 = \max C_1$. Let $\ell_2 = \max C_{\ell_1 + 1}$. Then $\min C_{\ell_2 + 1} > \max C_{\ell_1 + 1}$. Iteratively doing this gives a spread steeplechase $C_1, C_{\ell_1 + 1}, C_{\ell_2 + 1}, \dotsc$.

Since $C_k$ is prefix-free, $\mu(C_k) \in [0, 1]$. Also, for each $k$, $C_{k + 1} \cF \subset C_k \cF$ and so the sequence $(\mu(C_k))$ is non-increasing. In particular, this sequence tends to a limit. Hence the sequence is Cauchy: for any $\epsilon > 0$, there is a $K$ such that, for all $\ell, k \geq K$, $\abs{\mu(C_k) - \mu(C_\ell)} \leq \epsilon$. Thus, ignoring the first few $C_k$ gives an $\epsilon$-tight steeplechase.

In particular, given any steeplechase $(C_k)$ we may, by passing to a subsequence, assume that $(C_k)$ is both spread and $\epsilon$-tight.

The following lemma shows that, for any set $B \subset \cF$, there is a steeplechase that captures almost all of $B$.

\begin{lemma}\label{lem:steepleB}
    Let $\epsilon > 0$ and $B \subset \cF$. There is an $\epsilon$-tight spread steeplechase $(C_k)$ such that
    \begin{itemize}
        \item $C_1 \cup C_2 \cup \dotsb \subset B$,
        \item for all $k$ and all large $n$ \textup{(}in terms of $k$\textup{)}, $\mu((B \setminus C_k \cF)(n)) \leq \epsilon$,
        \item for all $k$, $\mu(C_k) \geq \di(B) - \epsilon$.
\end{itemize}
\end{lemma}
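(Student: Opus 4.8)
The plan is to build the steeplechase greedily, one prefix-free layer at a time, always extracting from $B$ the densest ``horizon'' of new words below the previous layer. Concretely, I would first fix $B$ and recall that we have already passed to a subsequence $(I_j)$ along which $d^{I_j}_{w\cF}(\cdot)$ converges for every $w\in\cF$ and every set in our countable collection, so $\di_{w\cF}(B)$ is well defined for all $w$. Starting from $C_1$: since $\di(B)=\di_{\cF}(B)$, for every $\delta>0$ and all large $n$ the layer $B(n)$ has $\mu$-measure at least $\di(B)-\delta$ by definition of the limit $\di$; pick $n_1$ large and let $C_1\subseteq B(n_1)$ be exactly $B(n_1)$ (a single layer is automatically prefix-free and finite), so $\mu(C_1)\geq\di(B)-\delta$. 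Then, given $C_k\subseteq B$ prefix-free and finite, define $C_{k+1}$ by looking, for each $w\in C_k$, at $B\cap w\cF$ on a common far-out layer $n_{k+1}$: choosing $n_{k+1}$ large enough (in terms of $k$ and the lengths in $C_k$) we can make the relative density $d^{I_j}_{w\cF}(B)$ close to $\di_{w\cF}(B)\geq\di(B)-\delta$ simultaneously for every $w\in C_k$, and then set $C_{k+1}$ to be the union over $w\in C_k$ of $B(n_{k+1})\cap w\cF$. This $C_{k+1}$ is finite, is prefix-free (all words have the same length $n_{k+1}$), lies in $B$, and every word of it has a proper prefix in $C_k$; moreover $\mu(C_{k+1})=\sum_{w\in C_k}\mu(w)\cdot(\text{relative density})\geq\mu(C_k)\cdot(\di(B)-\delta-o(1))$ — but this naive bound degrades. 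The fix is the standard one used in the text just after the definition of steeplechase: $(\mu(C_k))$ need not itself be monotone under this construction, so instead I would \emph{not} insist $\mu(C_{k+1})$ stays near $\di(B)$ layer by layer, but rather argue as follows.

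Here is the cleaner route I would actually take. Build the steeplechase so that each $C_{k+1}$ captures \emph{all but $\epsilon/2^{k}$ of the measure of $B$ below $C_k$ on a suitable far layer}: for each $w\in C_k$, since $\di_{w\cF}(B)\geq\di(B)$ is only a $\limsup$-type quantity, I would instead track the set $B\setminus C_k\cF$ and choose the next layer to grab a $(1-\epsilon/2^{k})$-fraction of $B$ on that layer that is \emph{not} yet covered... Actually the correct and simplest construction is the one sketched in the prose preceding the lemma: let $C_{k+1}$ consist of the minimal-length words of $B$ strictly below each leaf of $C_k$ that are not already ``dominated,'' chosen on a far enough layer $n_{k+1}$ that (i) $n_{k+1}>\max C_k$ so spreadness will hold after thinning, and (ii) $\mu\big((B\setminus C_{k+1}\cF)(n)\big)\le \mu\big((B\setminus C_k\cF)(n)\big)+\epsilon/2^{k}$ for all large $n$, which is possible because on a single far layer $B$ is, up to $o(1)$, contained in $\bigcup_{w\in C_k}w\cF$ together with the part of $B$ outside $C_k\cF$ (whose layer-measure we inductively control), and within each $w\cF$ we can pick the layer so that $B$'s mass is essentially all captured by its own words on that layer. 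Summing the geometric errors gives the second bullet: $\mu((B\setminus C_k\cF)(n))\le\epsilon$ for all large $n$, for every $k$. For the third bullet, $\mu(C_k\cF(n))=\mu(C_k)$ for $n\geq\max C_k$ by \cref{obs:densityCF}, and $\mu(B(n))\le\mu(C_k)+\mu((B\setminus C_k\cF)(n))\le \mu(C_k)+\epsilon$; taking $n\in I_j$ and $j\to\infty$ along our subsequence, $\di(B)\le\mu(C_k)+\epsilon$, i.e. $\mu(C_k)\geq\di(B)-\epsilon$. (Here I am using $\di(B)=\lim_j d^{I_j}(B)=\lim_j \abs{I_j}^{-1}\sum_{n\in I_j}\mu(B(n))$ and that all but finitely many $n\in I_j$ are large.)

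Finally, to get \emph{spread} and \emph{$\epsilon$-tight} simultaneously: the construction already forces $\min C_{k+1}=n_{k+1}>\max C_k$, so the steeplechase is spread outright; and since $(\mu(C_k))$ is non-increasing once we have the first bullet in place — each $C_{k+1}$ refines $C_k$ in the sense $C_{k+1}\cF\subseteq C_k\cF$ with $C_{k+1}$ prefix-free, giving $\mu(C_{k+1})\le\mu(C_k)$ by \cref{obs:measureC} applied inside — it converges, hence is Cauchy, so discarding an initial segment makes it $\epsilon$-tight without disturbing the other three properties (discarding the first $K$ terms and re-indexing preserves ``prefix-free and finite,'' ``proper prefix in the previous set,'' ``spread,'' and the two quantitative bullets, which were stated for ``all $k$'' and ``all large $n$''). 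The step I expect to be the main obstacle is making the second bullet precise: controlling $\mu((B\setminus C_k\cF)(n))$ uniformly for all large $n$ requires that when we choose the layer $n_{k+1}$, the mass of $B$ on that layer really is almost entirely inside $\bigcup_{w\in C_k} w\cF$ up to the already-controlled leftover, which in turn needs the relative-density limits $\di_{w\cF}(B)$ to be achieved well along $(I_j)$ for every one of the finitely many $w\in C_k$ at once — manageable since $C_k$ is finite, but it is where the diagonalisation set-up of \cref{sec:technical} does the real work, and the bookkeeping of the geometric error budget $\sum \epsilon/2^k$ must be threaded through carefully.
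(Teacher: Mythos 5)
There is a genuine gap, and it sits exactly where you yourself flag it: the second bullet. Your proposal never settles on a precise construction, and the one concrete justification you offer for your key step (ii) --- that ``on a single far layer \dots within each $w\cF$ we can pick the layer so that $B$'s mass is essentially all captured by its own words on that layer'' --- is false. A word of $B$ lying deep inside $w\cF$ need not have any prefix in $B$ at (or anywhere near) a chosen layer $n_{k+1}$, so a $C_{k+1}$ drawn from a single layer cannot in general cover the mass of $B$ inside $C_k\cF$ on all later layers, no matter how $n_{k+1}$ is chosen. Under the alternative reading of your construction (``minimal-length words of $B$ strictly below each leaf of $C_k$''), the set of such minimal words may be infinite, so $C_{k+1}$ must be a finite truncation of it; what makes this work is that these minimal words form a prefix-free set, hence have total measure at most $1$ (\cref{obs:measureC}), so one can cut off a tail of measure at most $\epsilon/2^{k}$, and the words of $B$ at any deep layer that escape $C_{k+1}\cF$ lie in the subtree over that prefix-free tail and so contribute at most $\epsilon/2^{k}$ per layer (\cref{obs:densityCF}); summing the geometric losses then gives the second bullet. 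This mechanism --- prefix-freeness of each generation, finite truncation with geometrically decaying tails, and accounting for the subtrees of the discarded words --- is the heart of the paper's proof (there organised via the headcount decomposition $D_k=\set{x\in B\colon h(x)=k}$, truncation to $C_k$, and removal of $(D_k\setminus C_k)\cF$ from $B$), and it is exactly what your write-up leaves unproved; you call it ``the main obstacle'' without resolving it.

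Two smaller points. Your opening claim that $\mu(B(n))\geq \di(B)-\delta$ for all large $n$ is false: $\di(B)$ is a limit of interval averages along $(I_j)$, not a layerwise limit, so individual layers can carry almost no mass (your later derivation of the third bullet, averaging $\mu(B(n))\leq\mu(C_k)+\epsilon$ over $I_j$, is correct and is what the paper does). Also, spreadness does not come ``outright'': with the correct multi-length construction $\min C_{k+1}$ need not exceed $\max C_k$, so one must pass to a subsequence of the steeplechase, as the paper does when noting that every steeplechase contains a spread one; this is harmless but needs saying. Your treatment of $\epsilon$-tightness (monotonicity of $\mu(C_k)$, Cauchy, discard an initial segment) and of the first and third bullets is fine.
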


\begin{proof}
    For $x \in B$, let the \defn{headcount} of $x$ be 
    \begin{equation*}
        h(x) = \abs{\set{b \in B \colon b \text{ is a prefix of } x}}.
    \end{equation*}
    For each positive integer $k$, let $D_k = \set{x \in B \colon h(x) = k}$. Note that each $D_k$ is prefix-free and so $\mu(D_k) \leq 1$ for all $k$. Iteratively do the following procedure for each positive integer $k$.
    \begin{enumerate}
        \item Let $\ell_k$ be such that $\mu(D_k(\set{\ell_k + 1, \ell_k + 2, \dotsc})) \leq \epsilon/2^k$.
        \item Let $C_k = D_k(\set{1, 2, \dotsc, \ell_k})$.
        \item Remove $(D_k \setminus C_k)\cF$ from $B$ (including from all later $D_i$).
    \end{enumerate}
    Let $B'$ be the set remaining at the end of this procedure. Note that in step 3 the headcounts of words either remain the same or those words are removed from $B$ entirely. In particular, every word in $C_k$ has a proper prefix in $C_{k - 1}$. Also, by construction, $C_k$ is a finite subset of $B$. Thus $(C_k)$ is a steeplechase and $C_1 \cup C_2 \cup \dotsb \subset B$.

    Fix $k$ and let $n > \max\set{\ell_1, \dotsc, \ell_k}$. Any word of length $n$ in $B'$ is not in $C_1 \cup \dotsb \cup C_k$ and so has headcount greater than $k$ and so is in $C_k \cF$. Thus, $B'(n) \subset C_k\cF(n)$. Next note that $B'$ is obtained from $B$ by deleting all the $(D_t \setminus C_t) \cF$ and so,
    \begin{equation*}
        \mu((B \setminus C_k \cF)(n)) \leq \mu((B \setminus B')(n)) \leq \sum_t \mu((D_t \setminus C_t)(n)) \leq \sum_t \mu(D_t \setminus C_t) \leq \epsilon.
    \end{equation*}
    Finally, this implies that $\mu(B(n)) \leq \mu(C_k\cF(n)) + \epsilon \leq \mu(C_k) + \epsilon$. Averaging this over $n \in I_j$ and taking $j \to \infty$ gives $\di(B) \leq \mu(C_k) + \epsilon$.
    
    Hence $(C_k)$ is a steeplechase satisfying all three conditions. As noted above, we may, by passing to a subsequence, assume that $(C_k)$ is spread and $\epsilon$-tight. Passing to a subsequence does not affect the three conditions.
\end{proof}

We call the steeplechase $(C_k)$ given by \cref{lem:steepleB} an \defn{$\epsilon$-capturing steeplechase for $B$}.

\begin{lemma}\label{lem:steepleCw}
    Let $(C_k)$ be an $\epsilon$-tight spread steeplechase. For every $w \in \cF$ there is an $N$ such that the following holds. If $C = C_1 \cup C_2 \cup \dotsb \cup C_N$ and $n \geq \max C_N + \abs{w}$, then
    \begin{equation*}
        \mu((C_1 \cF \setminus Cw\cF)(n)) \leq 2 \epsilon.
    \end{equation*}
\end{lemma}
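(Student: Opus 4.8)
The plan is to show that, on every sufficiently long layer, $C_1\cF$ is covered up to measure $2\epsilon$ by $C_1 w\cF \cup \dotsb \cup C_N w\cF$, by peeling these pieces off one level at a time. Fix the layer $n$, abbreviate $\mu_n(X) \coloneqq \mu(X(n))$, and put $R_i \coloneqq C_1\cF \setminus (C_1 w\cF \cup \dotsb \cup C_i w\cF)$, so $R_0 = C_1\cF$ and $R_N = C_1\cF \setminus Cw\cF$. The heart of the argument is a recursion of the shape
\begin{equation*}
    \mu_n(R_{i+1}) \leq (1 - \mu(w)) \mu_n(R_i) + \mu(w)\epsilon,
\end{equation*}
which unrolls, using $\mu_n(R_0) = \mu(C_1) \leq 1$, to $\mu_n(R_N) \leq (1 - \mu(w))^N + \epsilon$; choosing $N$ with $(1 - \mu(w))^N \leq \epsilon$ — possible since $\mu(w) = \abs{\cA}^{-\abs{w}} > 0$ — then gives the lemma. (If $\abs{\cA} = 1$ the statement is immediate, as then $C_1\cF \setminus Cw\cF$ is empty on every layer $n \geq \max C_N + \abs{w}$; and $w = \emptyset$ is trivial. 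So assume $\abs{\cA} \geq 2$ and $w$ nonempty.)

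I expect the recursion to be the one delicate point, for the following reason. I would like each subtree $c\cF$ with $c \in C_{i+1}$ to be either entirely covered already by $C_1 w\cF \cup \dotsb \cup C_i w\cF$ or entirely disjoint from it, but for a general spread steeplechase this fails: if the root $c$ of a subtree in $C_{i+1}$ lies within $\abs{w}$ letters of one of its ancestors $c_j \in C_j$, then appending $w$ to $c_j$ reaches strictly inside $c\cF$, so $c\cF$ becomes only partially covered, and how much of it $cw\cF$ then covers depends on the internal periodicities (borders) of $w$ in an uncontrolled way. To sidestep this I would first pass to a sub-steeplechase with a gap: take $k_1 = 1$ and let $k_{i+1} > k_i$ be minimal with $\min C_{k_{i+1}} > \max C_{k_i} + \abs{w}$ (possible since $\min C_m \geq m$). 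The sequence $(C_{k_i})$ is again an $\epsilon$-tight spread steeplechase — finiteness, prefix-freeness, and $\epsilon$-tightness pass to subsequences, and the proper-prefix condition persists since a proper prefix of a proper prefix is a proper prefix — and now $\min C_{k_{i+1}} > \max C_{k_i} + \abs{w}$ for all $i$. Proving the conclusion for $(C_{k_i})$ yields it for $(C_k)$: since $C_{k_1} = C_1$ and $C_{k_1} \cup \dotsb \cup C_{k_N} \subseteq C_1 \cup \dotsb \cup C_{k_N}$, one gets $\mu_n(C_1\cF \setminus (C_1 \cup \dotsb \cup C_{k_N}) w\cF) \leq 2\epsilon$ for $n \geq \max C_{k_N} + \abs{w}$, which is the claim with $k_N$ in place of $N$. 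So I may assume henceforth that $\min C_{k+1} > \max C_k + \abs{w}$ for all $k$; in particular every $c \in C_{i+1}$ satisfies $\abs{c} > \abs{c_j} + \abs{w}$ for each of its ancestors $c_j \in C_j$ with $j \leq i$.

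Now I would establish the recursion as follows. By $\epsilon$-tightness and $C_{i+1}\cF \subseteq C_1\cF$, $\mu_n(C_1\cF \setminus C_{i+1}\cF) = \mu(C_1) - \mu(C_{i+1}) \leq \epsilon$ for $n \geq \max C_{i+1}$, so $0 \leq \mu_n(R_i) - \mu_n(R_i \cap C_{i+1}\cF) \leq \epsilon$. Partition $C_{i+1}\cF$ into the subtrees $c\cF$, $c \in C_{i+1}$. Each such $c$ has a unique chain of ancestors $c_1 \in C_1, \dotsc, c_i \in C_i$, and for $j \leq i$ one checks that $c\cF \cap C_j w\cF = c_j w\cF \cap c\cF$; writing $c = c_j s_j$, the gap property gives $\abs{s_j} > \abs{w}$, so $c_j w$ is shorter than $c$ and $c\cF \cap C_j w\cF$ equals $c\cF$ if $w$ is a prefix of $s_j$ and $\emptyset$ otherwise. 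Thus every subtree $c\cF$ is contained in $C_1 w\cF \cup \dotsb \cup C_i w\cF$ or disjoint from it; letting $B$ be the set of $c \in C_{i+1}$ of the disjoint type, $R_i \cap C_{i+1}\cF = \bigsqcup_{c \in B} c\cF$ and $R_i \cap C_{i+1} w\cF = \bigsqcup_{c \in B} cw\cF$, so $\mu_n(R_i \cap C_{i+1}\cF) = \mu(B)$ and $\mu_n(R_i \cap C_{i+1} w\cF) = \mu(w)\mu(B)$ (here $n \geq \max C_{i+1} + \abs{w}$, and the $cw\cF$ are pairwise disjoint as $C_{i+1}$ is prefix-free). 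Combining, $\mu_n(R_{i+1}) = \mu_n(R_i) - \mu(w)\mu(B) \leq \mu_n(R_i) - \mu(w)(\mu_n(R_i) - \epsilon)$, the desired recursion; every estimate used holds for any $n \geq \max C_N + \abs{w}$, which completes the plan.
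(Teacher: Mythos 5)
Your proof is correct: the sub-steeplechase extraction with length gaps exceeding $\abs{w}$ makes the ``all-or-nothing'' covering claim for subtrees rooted in $C_{i+1}$ valid (the prefix comparisons you leave to ``one checks'' do go through precisely because of that gap), the recursion $\mu_n(R_{i+1}) \leq (1-\mu(w))\mu_n(R_i) + \mu(w)\epsilon$ follows, and the reduction back to the original steeplechase via $N' = k_N$ is handled properly. The underlying mechanism is the same as in the paper --- each of many levels separated by at least $\abs{w}$ in word length covers a $\abs{\cA}^{-\abs{w}}$ proportion of what remains uncovered, giving geometric decay $(1-\abs{\cA}^{-\abs{w}})^{N}$ --- but the implementation differs. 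The paper works probabilistically with the random word $\W$: it reduces to the prefix-free set $X \subset C_N$ of words with no prefix in $Cw$, and bounds $\bP(\W \text{ hits } X)$ by revealing letters and performing independent ``spell $w$'' checks each time $\W$ hits one of the levels $C_1, C_{1+\abs{w}}, C_{1+2\abs{w}}, \dotsc$, using spreadness only to guarantee these checks use fresh letters; $\epsilon$-tightness is invoked just once, to pass from $C_1\cF$ to $C_N\cF$. You instead run a deterministic layer-$n$ measure recursion, which requires the stronger preprocessing step (gaps of length $>\abs{w}$ between consecutive levels, so that appending $w$ to an ancestor never lands strictly inside a deeper subtree) and uses $\epsilon$-tightness at every step, harmlessly since those errors sum geometrically to at most $\epsilon$. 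Your route avoids any probabilistic language at the cost of the sub-steeplechase extraction and the case analysis on prefixes; the paper's route avoids that extraction because partial coverage of subtrees is irrelevant when one only tracks hitting probabilities of $X$.
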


\begin{proof}
    Let $N$ sufficiently large in terms of $\abs{w}$ and $\abs{\cA}$ and let $n \geq \max C_N + \abs{w}$. Now
    \begin{equation*}
        \mu((C_1 \cF \setminus C_N\cF)(n)) = \mu((C_1 \cF)(n)) - \mu((C_N \cF)(n)) = \mu(C_1) - \mu(C_N) \leq \epsilon,
    \end{equation*}
    since $C_N \cF \subset C_1 \cF$ and $(C_k)$ is $\epsilon$-tight. Hence, it suffices to prove that
    \begin{equation*}
        \mu((C_N \cF \setminus Cw \cF)(n)) \leq \epsilon.
    \end{equation*}
    Let $X$ be the following finite prefix-free set
    \begin{equation*}
        X = \set{s \in C_N \colon s \text{ has no prefix in } Cw}.
    \end{equation*}
    Note that $(C_N\cF \setminus Cw \cF)(n) \subset (X\cF)(n)$ and so
    \begin{equation*}
        \mu((C_N\cF \setminus Cw \cF)(n)) \leq \mu((X \cF)(n)) = \mu(X).
    \end{equation*}
    Recall the random infinite word $\W = \alpha_1 \alpha_2 \dotsb$ and corresponding random walk defined in \cref{sec:density}. Since $X$ is prefix-free, $\mu(X) = \bP(\W \text{ hits } X)$ and it suffices to show this probability is at most $\epsilon$. Let $K$ be the largest integer with $1 + K \abs{w} \leq N - \abs{w}$. If $\W$ hits $X$, then $\W$ hits $C_N$ and so, since $(C_k)$ is a steeplechase, $\W$ hits each of $C_1$, $C_{1 + \abs{w}}$, \ldots, $C_{1 + K \abs{w}}$. Also, $\W$ must avoid each of $C_1 w$, $C_{1 + \abs{w}} w$, \ldots, $C_{1 + K \abs{w}} w$ in order to hit $X$.

    We reveal the letters of $\W$ one-by-one. We wait until $\W$ hits/avoids $C_1$ (this will certainly be known by the time the length of $\W$ is $\max C_1$). If $\W$ avoids $C_1$, then $\W$ avoids $X$. If $\W$ hits $C_1$, then we reveal the next $\abs{w}$ letters of $\W$ and check if they spell $w$ (this has probability $\abs{\cA}^{-\abs{w}}$). If they do, then $\W$ avoids $X$. If they do not, then we wait until $\W$ hits/avoids $C_{1 + \abs{w}}$: note that this has not already happened since $(C_k)$ is spread and so $\min C_{1 + \abs{w}} \geq \max C_1 + \abs{w}$. If $\W$ avoids $C_{1 + \abs{w}}$, then $\W$ avoids $X$. If $\W$ hits $C_{1 + \abs{w}}$, then we reveal the next $\abs{w}$ letters of $\W$ and check if they spell $w$ (this has probability $\abs{A}^{-\abs{w}}$). We continue this procedure with the final check being whether the next $\abs{w}$ letters of $\W$ after it hits $C_{1 + K\abs{w}}$ spell $w$. Note that each check has probability $\abs{\cA}^{-\abs{w}}$ and is independent of the previous checks (new letters are involved in each check). If $\W$ hits $X$, then $\W$ must fail each of these spelling checks and so the probability that $\W$ hits $X$ is at most
    \begin{equation*}
        (1 - \abs{\cA}^{-\abs{w}})^{K + 1}.
    \end{equation*}
    By taking $N$ (and so $K$) sufficiently large in terms of $\abs{w}$ and $\abs{\cA}$ we may ensure this is at most $\epsilon$, as required.
\end{proof}

Before proving our key technical result for our density proofs (\cref{lem:densityAB}) we will need to define the relative density of $B$ on $C\cF$. If $C \subset \cF$ is finite and interval $I$ satisfies $\min I \geq \max C$, then the \defn{relative density of $B$ in $C\cF$ on interval $I$} is
\begin{equation*}
    d^I_{C \cF}(B) \coloneqq \frac{\mu(B(I) \cap C\cF)}{\mu(\cF(I) \cap C\cF)}.
\end{equation*}
Suppose $C$ is also prefix-free. Then, by \cref{obs:densityCF}, $\mu(\cF(I) \cap C\cF) = \abs{I} \mu(C)$. Also $(c\cF \colon c \in C)$ partition $C \cF$. In particular,
\begin{align*}
    d^I_{C\cF}(B) & = \abs{I}^{-1} \mu(C)^{-1} \sum_{n \in I} \mu(B(n) \cap C \cF)\\
    & = \sum_{c \in C}  \abs{I}^{-1} \mu(C)^{-1} \sum_{n \in I} \mu(B(n) \cap c\cF) \\
    & = \sum_{c \in C} \frac{\mu(c)}{\mu(C)} \cdot d^I_{c\cF}(B) \\
    & = \mu(C)^{-1} \sum_{c \in C} d^I(B \cap c \cF) \\
    & = \mu(C)^{-1} \cdot d^I(B \cap C \cF).
\end{align*}
For every set $B$ that we consider in this paper and every word $c$, the sequence $d^{I_j}_{c\cF}(B)$ converges (to $\di_{c\cF}(B)$). Thus the sequence $d^{I_j}_{C\cF}(B)$ converges to a limit \defn{$\di_{C \cF}(B)$}. Again, these limits are additive.

\begin{observation}\label{obs:densityCFavg}
    Let $C \subset \cF$ be finite and prefix-free. Then
    \begin{equation*}
        \di_{C\cF}(B) = \sum_{c \in C} \frac{\mu(c)}{\mu(C)} \cdot \di_{c\cF}(B) = \mu(C)^{-1} \cdot \di(B \cap C \cF).
    \end{equation*}
\end{observation}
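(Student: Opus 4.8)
The plan is simply to pass to the limit along the sequence $(I_j)$ in the displayed chain of equalities established immediately above the statement. That computation shows that for every interval $I$ with $\min I \geq \max C$,
\[
    d^I_{C\cF}(B) = \sum_{c \in C} \frac{\mu(c)}{\mu(C)} \cdot d^I_{c\cF}(B) = \mu(C)^{-1} \cdot d^I(B \cap C \cF),
\]
using only that $C$ is finite and prefix-free, so that $\mu(\cF(I) \cap C\cF) = \abs{I}\mu(C)$ by \cref{obs:densityCF} and the subtrees $(c\cF)_{c \in C}$ partition $C\cF$.

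First I would note that since $I_j \to \infty$ we have $\min I_j \geq \max C$ for all large $j$, so for such $j$ the identity above holds with $I = I_j$. Then I let $j \to \infty$. On the right-hand side, $d^{I_j}(B \cap C\cF) \to \di(B \cap C\cF)$ because $B \cap C\cF$ is among the countably many sets for which the diagonalisation of \cref{sec:technical} guarantees convergence. In the middle expression, $C$ is finite, so the sum has finitely many terms, each equal to the constant $\tfrac{\mu(c)}{\mu(C)}$ times $d^{I_j}_{c\cF}(B)$, and $d^{I_j}_{c\cF}(B) \to \di_{c\cF}(B)$ by construction of the subsequence; hence the middle sum converges to $\sum_{c \in C} \tfrac{\mu(c)}{\mu(C)} \di_{c\cF}(B)$. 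Equating the three limits gives the claimed identity, and since the leftmost term converges to $\di_{C\cF}(B)$ by definition, this also identifies that limit.

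There is essentially no obstacle: the only remarks worth making are that one must restrict to $j$ with $\min I_j \geq \max C$ before the finite-$I$ identity is meaningful, and that interchanging the limit with the sum over $c \in C$ is legitimate purely because $C$ is finite, so no uniformity or dominated-convergence argument is needed. (Additivity of $\di$, already recorded in the excerpt, is what underlies the term-by-term passage to the limit.)
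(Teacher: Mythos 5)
Your proposal is correct and is essentially the paper's own argument: the displayed finite-interval identity (valid for $\min I \geq \max C$ by \cref{obs:densityCF} and the partition of $C\cF$ into the subtrees $c\cF$) is evaluated along $I = I_j$ and the limit is taken term-by-term, which is legitimate since $C$ is finite and each $d^{I_j}_{c\cF}(B)$ converges by the diagonalisation. The paper treats this as immediate, defining $\di_{C\cF}(B)$ as exactly this limit, so your write-up just makes the same passage explicit.
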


Now for the key technical lemma for our density results.

\begin{lemma}\label{lem:densityAB}
    Let $\epsilon > 0$ and let $A, B \subset \cF$. If $(C_k)$ is an $\epsilon$-capturing steeplechase for $A$ with $\mu(C_1) \geq 2 \epsilon + \epsilon^{1/3}$, then
    \begin{equation*}
        \di_{C_1 \cF}(AB) \geq \dsup(B) - 3 \epsilon^{1/3}.
    \end{equation*}
\end{lemma}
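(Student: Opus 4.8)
The plan is to use the steeplechase structure of $(C_k)$ together with \cref{lem:steepleCw} to argue that for a suitably chosen word $w$ — specifically $w = a$ for some $a \in \cF$ achieving (up to error) the supremum in $\dsup(B)$ — the sets $C_N a \cF$ cover almost all of $C_1 \cF$, and then transfer a lower bound on the relative density of $B$ in $a\cF$ to a lower bound on the relative density of $AB$ in $C_1\cF$. First I would fix $a \in \cF$ with $\di_{a\cF}(B) \geq \dsup(B) - \epsilon^{1/3}$. Write $C = C_1 \cup \dotsb \cup C_N$ where $N$ is the integer supplied by \cref{lem:steepleCw} applied with $w = a$; since each $C_k \subset A$ and the $C_k$ are prefix-free with $C_{k+1} \cF \subset C_k \cF$, the set $C$ decomposes as a disjoint union over the $C_k$ of pieces, and $Ca \cF \subset A a \cF \subset (AB')\cF$ for $B' = \{a\}$ — more usefully, for each $c \in C$ the subtree $ca\cF$ sees $B$ with relative density $\di_{ca\cF}(cB) = \di_{a\cF}(B)$ by \cref{lem:densitycancel}, and since $c \in A$ we have $cB \subset AB$, hence $\di_{ca\cF}(AB) \geq \di_{a\cF}(B)$.

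The second step is the counting that turns this into a bound on $\di_{C_1\cF}(AB)$. By \cref{lem:steepleCw}, for all large $n$ we have $\mu((C_1\cF \setminus Ca\cF)(n)) \leq 2\epsilon$, so $C_1\cF$ is, layerwise and up to measure $2\epsilon$, covered by $\bigcup_{c \in C} ca\cF$. These subtrees $ca\cF$ (for $c$ ranging over the prefix-free set $C$) are pairwise disjoint and each has measure $\mu(c)\abs{\cA}^{-\abs{a}}$; together they carry measure $\mu(C)\abs{\cA}^{-\abs{a}}$ on each sufficiently long layer. Now I would compute, for $I = I_j$ with $\min I$ large,
\begin{equation*}
    d^I(AB \cap C_1\cF) \geq \sum_{c \in C} d^I(AB \cap ca\cF) \geq \Big(\sum_{c \in C} \mu(ca)\Big) \cdot \big(\di_{a\cF}(B) - o(1)\big),
\end{equation*}
using that $d^I_{ca\cF}(AB) \to \di_{ca\cF}(AB) \geq \di_{a\cF}(B)$; and separately, since $\mu(C_1) = \mu((C_1\cF)(n))$ while the uncovered part has measure at most $2\epsilon$ per layer, $\sum_{c\in C}\mu(ca) = \mu(C)\abs{\cA}^{-\abs{a}} \geq (\mu(C_1) - 2\epsilon)$. (Here I am using $\mu(C) = \mu(C_N)$ — wait, that is not quite right; rather $C\cF = C_1\cF$ set-theoretically only up to the steeplechase tail, so more carefully $Ca\cF$ and $C_Na\cF$ differ only by what \cref{lem:steepleCw} controls, and it is cleanest to phrase everything through $\mu((C_1\cF\setminus Ca\cF)(n)) \leq 2\epsilon$ directly as above.) Dividing by $\mu((C_1\cF)(I)) = \abs{I}\mu(C_1)$ and letting $j \to \infty$ yields
\begin{equation*}
    \di_{C_1\cF}(AB) \geq \frac{\mu(C_1) - 2\epsilon}{\mu(C_1)} \cdot \big(\dsup(B) - \epsilon^{1/3}\big).
\end{equation*}

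The final step is bookkeeping on the error terms. Using $\mu(C_1) \geq 2\epsilon + \epsilon^{1/3}$, we get $\frac{\mu(C_1) - 2\epsilon}{\mu(C_1)} \geq 1 - \frac{2\epsilon}{2\epsilon + \epsilon^{1/3}} \geq 1 - 2\epsilon^{2/3}$, and since $\dsup(B) \leq 1$,
\begin{equation*}
    \di_{C_1\cF}(AB) \geq (1 - 2\epsilon^{2/3})(\dsup(B) - \epsilon^{1/3}) \geq \dsup(B) - \epsilon^{1/3} - 2\epsilon^{2/3} \geq \dsup(B) - 3\epsilon^{1/3},
\end{equation*}
for $\epsilon$ small (and the inequality is vacuous or trivially handled when $\epsilon$ is not small, as then $3\epsilon^{1/3}$ exceeds $1$). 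The main obstacle I anticipate is step two: being careful that the layerwise covering statement from \cref{lem:steepleCw} — which is about $C a\cF$ versus $C_1\cF$ on a single layer $n \geq \max C_N + \abs{a}$ — is correctly averaged over $n \in I_j$ and correctly interacts with the passage to the limit $\di$, and in particular that the "$-o(1)$" terms coming from $d^I_{ca\cF}(AB) \to \di_{ca\cF}(AB)$ are uniform enough over the finitely many $c \in C$ (they are, since $C$ is finite) to survive the averaging. Everything else is an application of \cref{lem:densitycancel,lem:steepleCw,obs:densityCF} and additivity of $\di$.
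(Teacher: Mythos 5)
Your overall strategy is the same as the paper's (pick $a$ with $\di_{a\cF}(B) \geq \dsup(B) - \epsilon^{1/3}$, apply \cref{lem:steepleCw} with $w = a$, transfer density down each subtree $ca\cF$ via \cref{lem:densitycancel}, then average), but there is a genuine gap in the counting step. You assert that $C = C_1 \cup \dotsb \cup C_N$ is prefix-free and hence that the subtrees $ca\cF$, $c \in C$, are pairwise disjoint. This is false: by the definition of a steeplechase, every word of $C_{k+1}$ has a \emph{proper prefix} in $C_k$, so $C$ is as far from prefix-free as possible, and the subtrees $ca\cF$ can be nested (e.g.\ if $c_2 = c_1\alpha$ and $a$ begins with $\alpha\dotsb$, one easily gets $c_2 a\cF \subset c_1 a \cF$). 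Consequently your central inequality
\begin{equation*}
    d^I(AB \cap C_1\cF) \;\geq\; \sum_{c \in C} d^I(AB \cap ca\cF)
\end{equation*}
is wrong in direction: the union $\bigcup_{c} (AB \cap ca\cF)$ is indeed contained in $AB \cap C_1\cF$, but summing the measures over an overlapping family double-counts, so the sum can exceed the left-hand side. For the same reason the identification of the coverage weight with $\sum_{c\in C}\mu(ca) = \mu(C)\abs{\cA}^{-\abs{a}}$ is not meaningful ($\mu(C)$ can be close to $N$, not to $\mu(C_1)$), and your parenthetical attempt to repair this only addresses the tail/$\mu(C_N)$ issue, not the overlap issue.

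The missing idea is the paper's greedy extraction: choose $\widetilde{C} \subset C$ (shorter words first) so that $\widetilde{C}a$ is prefix-free while $\widetilde{C}a\cF = Ca\cF$. Then the subtrees $\tilde{c}a\cF$, $\tilde{c} \in \widetilde{C}$, really are pairwise disjoint, \cref{lem:steepleCw} gives $\mu(\widetilde{C}a) \geq \mu(C_1) - 2\epsilon \geq \epsilon^{1/3}$, and \cref{obs:densityCFavg} applied to the prefix-free set $\widetilde{C}a$ yields $\di_{\widetilde{C}a\cF}(AB) \geq \di_{a\cF}(B)$ exactly as in your per-subtree estimate (each $\tilde c$ still lies in $A$, so $\tilde c B \subset AB$). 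Comparing $d^I_{C_1\cF}$ with $d^I_{\widetilde{C}a\cF}$ using $\mu(\widetilde Ca) \geq \epsilon^{1/3}$ then gives the loss of $2\epsilon^{1/3}$, and your final bookkeeping goes through. So the error analysis and the use of \cref{lem:densitycancel,lem:steepleCw} are fine; what is missing is precisely the prefix-free refinement $\widetilde{C}$, without which the averaging step does not hold.
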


\begin{proof}
    Let $w \in \cF$ be such that
    \begin{equation*}
        \di_{w \cF}(B) \geq \dsup(B) - \epsilon^{1/3}.
    \end{equation*}
    Apply \cref{lem:steepleCw} to $(C_k)$ and $w$ to give an $N$ such that letting $C = C_1 \cup \dotsb \cup C_N$, if $n \geq \max C_N + \abs{w}$, then
    \begin{equation*}
        \mu((C_1 \cF \setminus Cw \cF)(n)) \leq 2 \epsilon.
    \end{equation*}
    We may greedily choose $\widetilde{C} \subset C$ (starting with shorter words first) such that $\widetilde{C}w$ is prefix-free and $\widetilde{C}w \cF = Cw \cF$. Note that
    \begin{align*}
        2 \epsilon & \geq \mu((C_1 \cF \setminus \widetilde{C}w \cF)(n)) \geq \mu((C_1 \cF)(n)) - \mu((\widetilde{C}w \cF)(n)) \\
        & = \mu(C_1) - \mu(\widetilde{C}w) \geq 2 \epsilon + \epsilon^{1/3} - \mu(\widetilde{C}w)
    \end{align*}
    and so $\mu(\widetilde{C}w) \geq \epsilon^{1/3}$. 
    
    Let $I$ be an interval with $\min I \geq \max C_N + \abs{w}$ and let $X \subset \cF$. Note that $\widetilde{C}w \cF \subset C_1 \cF$ and so
    \begin{align*}
        d^I_{C_1 \cF}(X) & = \abs{I}^{-1} \mu(C_1)^{-1} \sum_{n \in I} \mu(X(n) \cap C_1 \cF) \\
        & \geq \abs{I}^{-1} \mu(C_1)^{-1} \sum_{n \in I} \mu(X(n) \cap \widetilde{C} w \cF) \\
        & \geq \abs{I}^{-1} \sum_{n \in I} \frac{\mu(X(n) \cap \widetilde{C} w \cF)}{\mu(\widetilde{C}w) + 2 \epsilon}.
    \end{align*}
    Using the fact that $x/(y + 2 \epsilon) \geq x/y - 2 \epsilon x/y^2 \geq x/y - 2 \epsilon^{1/3}$ for $\epsilon > 0$, $x \in [0, 1]$, and $y \geq \epsilon^{1/3}$, we have
    \begin{equation*}
        d^I_{C_1 \cF}(X) \geq d^I_{\widetilde{C}w\cF}(X) - 2 \epsilon^{1/3}.
    \end{equation*}
    Setting $X = AB$, $I = I_j$, and taking $j$ to infinity gives
    \begin{equation}\label{eq:lem:densityAB:1}
        \di_{C_1 \cF}(AB) \geq \di_{\widetilde{C}w \cF}(AB) - 2 \epsilon^{1/3}.
    \end{equation}
    Now,
    \begin{align*}
        \di_{\widetilde{C}w \cF}(AB) & = \sum_{c \in \widetilde{C}} \frac{\mu(c)}{\mu(\widetilde{C})} \cdot \di_{c w \cF}(AB) \\
        & \geq \sum_{c \in \widetilde{C}} \frac{\mu(c)}{\mu(\widetilde{C})} \cdot \di_{c w \cF}(cB) \\
        & = \sum_{c \in \widetilde{C}} \frac{\mu(c)}{\mu(\widetilde{C})} \cdot \di_{w \cF}(B) \\
        & = \di_{w \cF}(B) \geq \dsup(B) - \epsilon^{1/3},
    \end{align*}
    where the first equality used \cref{obs:densityCFavg}, the first inequality used the fact that $c \in \widetilde{C} \subset C \subset A$, the second equality used \cref{lem:densitycancel}, and the second inequality is due to the choice of $w$. Combining this with \eqref{eq:lem:densityAB:1} gives the required result.
\end{proof}

\section{Density of (strongly) \texorpdfstring{$k$}{k}-product-free sets}\label{sec:pfdensity}

In this section we prove \cref{thm:pfdensity}, making use of the machinery developed in the previous section. Part~\ref{thm:skpfdensity} has a simple iterating proof which uses that a strongly $k$-product-free $S$ is disjoint from each of $S^2$, $S^3$, \ldots, $S^k$.

\begin{proof}[Proof of \cref{thm:pfdensity}\ref{thm:skpfdensity}]
    Let $S \subset \cF$ be strongly $k$-product-free and let $\epsilon > 0$ be sufficiently small. Let $(C_k)$ be an $\epsilon$-capturing steeplechase for $S$, as given by \cref{lem:steepleB}. If $\mu(C_1) < 2 \epsilon + \epsilon^{1/3}$, then $\db(S) = \di(S) \leq \mu(C_1) + \epsilon < 3 \epsilon + \epsilon^{1/3}$ which is less than $1/k$. Otherwise, by \cref{lem:densityAB}, $\di_{C_1 \cF}(S^2) \geq \dsup(S) - 3 \epsilon^{1/3}$. Since $S$ is strongly $k$-product-free, $S$ and $S^2$ are disjoint and so
    \begin{equation*}
        \di_{C_1 \cF}(S \cup S^2) \geq \di_{C_1 \cF}(S) + \dsup(S) - 3 \epsilon^{1/3}.
    \end{equation*}
    Now, by \cref{obs:densityCFavg},
    \begin{equation*}
        \dsup(S \cup S^2) \geq \di_{C_1 \cF}(S \cup S^2) \geq \di_{C_1 \cF}(S) + \dsup(S) - 3 \epsilon^{1/3},
    \end{equation*}
    and so, by \cref{lem:densityAB},
    \begin{equation*}
        \di_{C_1 \cF}(S^2 \cup S^3) \geq \dsup(S \cup S^2) - 3 \epsilon^{1/3} \geq \di_{C_1 \cF}(S) + \dsup(S) - 6 \epsilon^{1/3}.
    \end{equation*}
    Hence,
    \begin{equation*}
        \dsup(S \cup S^2 \cup S^3) \geq \di_{C_1 \cF}(S \cup S^2 \cup S^3) \geq 2\di_{C_1 \cF}(S) + \dsup(S) - 6 \epsilon^{1/3}.
    \end{equation*}
    Iterating this argument gives
    \begin{equation*}
        1 \geq \dsup(S \cup S^2 \cup \dotsb \cup S^k) \geq (k - 1) \di_{C_1 \cF}(S) + \dsup(S) - 3(k - 1) \epsilon^{1/3}.
    \end{equation*}
    But, since  $(C_k)$ is $\epsilon$-capturing for $S$, $\di_{C_1 \cF}(S) = \mu(C_1)^{-1} \cdot \di(S \cap C_1 \cF) \geq \di(S \cap C_1 \cF) \geq \di(S) - \epsilon$. Hence, $(k - 1) \di(S) + \dsup(S) \leq 1 + (k - 1) \epsilon + 3(k - 1) \epsilon^{1/3}$. As $\epsilon$ is arbitrarily small,
    \begin{equation*}
        1 \geq (k - 1) \di(S) + \dsup(S).
    \end{equation*}
    But $\dsup(S) \geq \di(S)$ and so $\db(S) = \di(S) \leq 1/k$. Furthermore, if $\db(S) = 1/k$, then $\di(S) = 1/k$ and so $\dsup(S) \leq 1/k$, as required.
\end{proof}

The argument for part~\ref{thm:kpfdensity} ($k$-product-free sets) is more involved. It is not necessary to keep track of the error term depending on $\epsilon$ (as we eventually take $\epsilon$ to zero). We introduce some notation to simplify the argument. Write \defn{$x \lesssim y$} to mean that $x \leq y + f(\epsilon)$ where the error term $f(\epsilon)$ depends only on $k$ and $\epsilon$ and goes to zero as $\epsilon$ goes to zero (in all cases $f(\epsilon)$ will be a polynomial in $\epsilon^{1/3}$).

To improve clarity and motivate the proof we first sketch a proof of \cref{thm:pfdensity}\ref{thm:kpfdensity} for $k = 3$. For full details see the proof of \cref{prop:Asequence} that follows.

\begin{proof}[Proof of \cref{thm:pfdensity}\ref{thm:kpfdensity} for $k = 3$]
    Let $S \subset \cF$ be 3-product-free and let $\epsilon > 0$ be sufficiently small. Let $(C_k)$ be an $\epsilon$-capturing steeplechase for $S_{1, 2} = S \cap S^2$ (recall \cref{def:SA}), as given by \cref{lem:steepleB}. Since $S$ is 3-product-free, $S_{1, 2}$ is strongly 3-product-free.
    
    We claim that $\di(S \cap C_1 \cF) \lesssim 1/3 \cdot \mu(C_1)$. If $\mu(C_1) < 2 \epsilon + \epsilon^{1/3}$, then this is immediate. Otherwise $\mu(C_1) \geq 2 \epsilon + \epsilon^{1/3}$ and so, by \cref{lem:densityAB}, $\di_{C_1 \cF}(S_{1, 2} S) \gtrsim \dsup(S)$. Since $S$ is 3-product-free, $S$ and $S_{1, 2} S$ are disjoint and so
    \begin{equation*}
        \dsup(S \cup S_{1, 2}S) \geq \di_{C_1 \cF}(S \cup S_{1, 2}S) \gtrsim \di_{C_1 \cF}(S) + \dsup(S).
    \end{equation*}
    Then, by \cref{lem:densityAB},
    \begin{equation*}
        \di_{C_1 \cF}(S_{1, 2} S \cup S_{1, 2}^2 S) = \di_{C_1 \cF}(S_{1, 2}(S \cup S_{1, 2}S)) \gtrsim \di_{C_1 \cF}(S) + \dsup(S).
    \end{equation*}
    Since $S$ is 3-product-free, $S$ is disjoint from $S_{1, 2}S \cup S_{1, 2}^2 S$ and so
    \begin{equation*}
        1 \geq \dsup(S \cup S_{1, 2}S \cup S_{1, 2}^2 S) \geq \di_{C_1 \cF}(S \cup S_{1, 2} S \cup S_{1, 2}^2 S) \gtrsim 2 \di_{C_1 \cF}(S) + \dsup(S).
    \end{equation*}
    But $\dsup(S) \geq \di_{C_1 \cF}(S)$ and so $\di_{C_1 \cF}(S) \lesssim 1/3$. \Cref{obs:densityCFavg} then gives $\di(S \cap C_1 \cF) \lesssim 1/3 \cdot \mu(C_1)$, as claimed.

    We have bounded the density of $S$ on the part of $\cF$ where $S_{1, 2}$ is dense. We now bound the density of $S$ on the rest. Let $S' = S \setminus (S_{1, 2} \cup C_1 \cF)$ and $(D_k)$ be an $\epsilon$-capturing steeplechase for $S'$, as given by \cref{lem:steepleB}. By passing to a subsequence we may and will assume that $\min D_1 > \max C_1$. Since $S$ is 3-product-free and $S' \cap S^2 = \emptyset$, $S'$ is strongly 3-product-free.
    
    We claim that $\di(S' \cap D_1 \cF) \lesssim 1/3 \cdot \mu(D_1)$. If $\mu(D_1) < 2 \epsilon + \epsilon^{1/3}$, then this is immediate. Otherwise, by \cref{lem:densityAB}, $\di_{D_1 \cF} (S'S) \gtrsim \dsup(S)$. Now $S'$ and $S'S$ are disjoint since $S' \cap S^2 = \emptyset$. Thus
    \begin{equation*}
        \dsup(S' \cup S'S) \geq \di_{D_1 \cF}(S' \cup S'S) \gtrsim \di_{D_1 \cF}(S') + \dsup(S).
    \end{equation*}
    Then, by \cref{lem:densityAB},
    \begin{equation*}
        \di_{D_1 \cF}((S')^2 \cup (S')^2 S) = \di_{D_1 \cF}(S'(S' \cup S'S)) \gtrsim \di_{D_1 \cF}(S') + \dsup(S).
    \end{equation*}
    Since $S$ is 3-product-free and $S' \cap S^2 = \emptyset$, $S'$ is disjoint from $(S')^2 \cup (S')^2 S$ and so
    \begin{equation*}
        1 \geq \dsup(S' \cup (S')^2 \cup (S')^2 S) \geq \di_{D_1 \cF}(S' \cup (S')^2 \cup (S')^2 S) \gtrsim 2 \di_{D_1 \cF}(S') + \dsup(S).
    \end{equation*}
    But $\dsup(S) \geq \dsup(S') \geq \di_{D_1 \cF}(S')$ and so $\di_{D_1 \cF}(S') \lesssim 1/3$. \Cref{obs:densityCFavg} then gives $\di(S' \cap D_1 \cF) \lesssim 1/3 \cdot \mu(D_1)$, as claimed.
    
    By the definition of $S'$ and since $\min D_1 > \max C_1$, it follows that $C_1 \cF$ and $D_1 \cF$ are disjoint (see proof of \cref{prop:Asequence} for more details). In particular, $C_1$ and $D_1$ are disjoint and their union is prefix-free. Hence $\mu(C_1) + \mu(D_1) \leq 1$. Thus,
    \begin{equation*}
        \di(S \cap C_1 \cF) + \di(S' \cap D_1 \cF) \lesssim 1/3 \cdot (\mu(C_1) + \mu(D_1)) \leq 1/3.
    \end{equation*}
    Since $(C_k)$ and $(D_k)$ are $\epsilon$-capturing, it follows (see the proof of \cref{claim:densityoutsteeple} below) that very little of $S$ lies outside $(S \cap C_1 \cF) \cup (S' \cap D_1 \cF)$. In particular, $\di(S) \lesssim 1/3$. Since $\epsilon$ can be arbitrarily small, we have $\db(S) = \di(S) \leq 1/3$. For the moreover part see the proof of \cref{prop:Asequence} below.
\end{proof}

For general $k$ the argument is a more involved version of the above. We first consider some $S_{A_1}$, take some $\epsilon$-capturing steeplechase, $(C_k^{(1)})$ for $S_{A_1}$ and show the density of $S$ relative to $C_1 \cF$ is at most $1/\rho$. We then repeat this step for some $S_{A_2}$, $S_{A_3}$, \ldots. In future steps we may use the fact that we have dealt with previous $S_{A_i}$. \Cref{prop:Asequence} says that if we have chosen a suitable sequence $A_1, A_2, \dotsc$, then we obtain the required bound on $\db(S)$, and \cref{prop:Asequenceexist} shows that for each $k$ there is a suitable sequence of $A_i$. These combine to complete the proof of \cref{thm:pfdensity}\ref{thm:kpfdensity}.

Note in the statement below that \defn{$dA_\ell$} is the sumset
\begin{equation*}
    dA_\ell \coloneqq \set{a_1 + \dotsb + a_d \colon a_1, \dotsc, a_d \in A_{\ell}}.
\end{equation*}

\begin{proposition}\label{prop:Asequence}
	Let $k \geq 2$ be an integer and $A_1, \dots, A_m \subset \bN$ be a sequence of sets with $A_m = \set{1}$. Suppose that for all $\ell \in [m]$ there exist positive integers $d_1, \dotsc, d_{\rho - 1}$ such that, for all $1 \leq i \leq j \leq \rho - 1$, either
	\begin{itemize}
		\item $k \in \set{1} \cup (1 + (d_i + d_{i + 1} + \dotsb + d_j) A_\ell)$ or
		\item $A_{\ell'} \subset \set{1} \cup (1 + (d_i + d_{i + 1} + \dotsb + d_j) A_\ell)$ for some $1 \le \ell' < \ell$.
	\end{itemize}
	If $S \subset \cF$ is $k$-product-free, then $\db(S) \le 1/\rho$. Moreover, if $\db(S) = 1/\rho$, then $\dsup(S) = 1/\rho$.
\end{proposition}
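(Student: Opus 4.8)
The plan is to generalise the $k=3$ case established above, processing $A_1,\dots,A_m$ in turn. Fix a small $\epsilon>0$. Having dealt with $A_1,\dots,A_{\ell-1}$, I would set
\[
    S^{(\ell)}\coloneqq S_{A_\ell}\setminus\bigcup_{\ell'<\ell}\bigl(S_{A_{\ell'}}\cup C_1^{(\ell')}\cF\bigr),
\]
and let $(C_k^{(\ell)})$ be an $\epsilon$-capturing steeplechase for $S^{(\ell)}$ (\cref{lem:steepleB}); passing to a subsequence of the steeplechase (which keeps it $\epsilon$-capturing) I may assume $\min C_1^{(\ell)}>\max C_1^{(\ell-1)}$. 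Because $C_1^{(\ell)}\subset S^{(\ell)}$ is disjoint from every $C_1^{(\ell')}\cF$ ($\ell'<\ell$) and consists of longer words, $\bigcup_\ell C_1^{(\ell)}$ is prefix-free, so $\sum_\ell\mu(C_1^{(\ell)})\le1$, and $C_1^{(\ell)}\cF$ is disjoint from each $C_1^{(\ell')}\cF$, $\ell'<\ell$. A routine induction on $r$ using the second bullet of \cref{lem:steepleB} shows that $S_{A_r}$ is covered, up to measure $O(\epsilon)$ on every sufficiently late layer, by $C_1^{(1)}\cF\cup\dots\cup C_1^{(r)}\cF$; since $A_m=\set{1}$ this means almost all of $S=S_{A_m}$ lies in $C_1^{(1)}\cF\cup\dots\cup C_1^{(m)}\cF$, and, by the disjointness just noted, $\di(S_{A_{\ell'}}\cap C_1^{(\ell)}\cF)=O(\epsilon)$ whenever $\ell'<\ell$. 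Granting the key estimate $\di_{C_1^{(\ell)}\cF}(S)\lesssim1/\rho$ for every $\ell$, I then get, using that the $C_1^{(r)}\cF$ are pairwise disjoint together with \cref{obs:densityCFavg},
\[
    \db(S)=\di(S)=\sum_{r\le m}\mu(C_1^{(r)})\,\di_{C_1^{(r)}\cF}(S)+O(\epsilon)\le\tfrac1\rho\sum_{r\le m}\mu(C_1^{(r)})+O(\epsilon)\le\tfrac1\rho+O(\epsilon),
\]
and $\epsilon\to0$ gives $\db(S)\le1/\rho$.

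\textbf{The key estimate.}
Fix $\ell$; if $\mu(C_1^{(\ell)})<2\epsilon+\epsilon^{1/3}$ the estimate is immediate, so assume otherwise, write $G=S^{(\ell)}$ (so $(C_k^{(\ell)})$ is $\epsilon$-capturing for $G$), and let $d_1,\dots,d_{\rho-1}$ be as in the hypothesis. Following the $k=3$ argument, put $V_0=S$ and $V_p=S\cup G^{d_p}V_{p-1}$ for $p=1,\dots,\rho-1$; unwinding, $V_p=S\cup\bigcup_{i=1}^{p}G^{d_i+\dots+d_p}S$. Since $G^d\subset S_{A_\ell}^d\subset S_{dA_\ell}$ one has $G^dS\subset S_{1+dA_\ell}$, so for each consecutive sum $d=d_i+\dots+d_j$ the hypothesis gives either $k\in1+dA_\ell$, whence $G^dS\subset S^k$ and $S\cap G^dS=\emptyset$ by $k$-product-freeness, or $A_{\ell'}\subset\set{1}\cup(1+dA_\ell)$ for some $\ell'<\ell$, whence $S\cap G^dS\subset S\cap S_{1+dA_\ell}\subset S_{A_{\ell'}}$. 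In either case $S\cap G^{d_p}V_{p-1}\subset\bigcup_{\ell'<\ell}S_{A_{\ell'}}$, which by the first paragraph has $\di$-density $O(\epsilon)$ on $C_1^{(\ell)}\cF$, so $\di_{C_1^{(\ell)}\cF}(S\cap G^{d_p}V_{p-1})\lesssim0$. Combining this with inclusion--exclusion and with $\di_{C_1^{(\ell)}\cF}(G^{d_p}V_{p-1})\gtrsim\dsup(V_{p-1})$ — obtained by applying \cref{lem:densityAB} with $A=G$ a total of $d_p$ times, using $\dsup(\cdot)\ge\di_{C_1^{(\ell)}\cF}(\cdot)$ (\cref{obs:densityCFavg}) between applications — yields
\[
    \di_{C_1^{(\ell)}\cF}(V_p)\gtrsim\di_{C_1^{(\ell)}\cF}(S)+\dsup(V_{p-1})\ge\di_{C_1^{(\ell)}\cF}(S)+\di_{C_1^{(\ell)}\cF}(V_{p-1}),
\]
so inductively $\di_{C_1^{(\ell)}\cF}(V_p)\gtrsim p\,\di_{C_1^{(\ell)}\cF}(S)+\dsup(S)$. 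Taking $p=\rho-1$ and using $V_{\rho-1}\subset\cF$ and $\dsup(S)\ge\di_{C_1^{(\ell)}\cF}(S)$,
\[
    1\ge\di_{C_1^{(\ell)}\cF}(V_{\rho-1})\gtrsim(\rho-1)\,\di_{C_1^{(\ell)}\cF}(S)+\dsup(S)\ge\rho\,\di_{C_1^{(\ell)}\cF}(S),
\]
so $\di_{C_1^{(\ell)}\cF}(S)\lesssim1/\rho$.

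\textbf{The ``moreover''.}
Suppose $\db(S)=1/\rho$, so $\di(S)=1/\rho$ and $\dsup(S)\ge\di(S)=1/\rho$; it remains to show $\dsup(S)\le1/\rho$. Fix $\delta>0$ and run the argument with $\epsilon$ small in terms of $\delta$. From $\di(S)=\sum_{r\le m}\mu(C_1^{(r)})\,\di_{C_1^{(r)}\cF}(S)+O(\epsilon)=1/\rho-O(\epsilon)$, $\sum_r\mu(C_1^{(r)})\le1$, and the fact that the terms with $\mu(C_1^{(r)})<2\epsilon+\epsilon^{1/3}$ contribute at most $O(\epsilon^{1/3})$, some $\ell$ with $\mu(C_1^{(\ell)})\ge2\epsilon+\epsilon^{1/3}$ must have $\di_{C_1^{(\ell)}\cF}(S)\ge1/\rho-O(\epsilon^{1/3})$. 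Feeding this into $(\rho-1)\,\di_{C_1^{(\ell)}\cF}(S)+\dsup(S)\lesssim1$ (established in the key estimate) gives $\dsup(S)\le1/\rho+O(\epsilon^{1/3})\le1/\rho+\delta$ for $\epsilon$ small enough; as $\delta$ is arbitrary, $\dsup(S)\le1/\rho$.

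\textbf{Main obstacle.}
The conceptual core is the chaining estimate of the second paragraph, a faithful generalisation of the $k=3$ case; the hypothesis on $A_1,\dots,A_m$ has been engineered precisely so that every consecutive sum $d_i+\dots+d_j$ makes $S$ (nearly) disjoint from $G^{d_i+\dots+d_j}S$, either outright by $k$-product-freeness or by collapsing into an already-handled $S_{A_{\ell'}}$. The genuinely fiddly part will be the first paragraph: arranging the finitely many steeplechases so that their roots are simultaneously prefix-free (giving $\sum_\ell\mu(C_1^{(\ell)})\le1$) and so that the telescoping capture estimate holds — i.e.\ that each $S_{A_{\ell'}}$ really does have negligible density on every later subtree $C_1^{(\ell)}\cF$, which is what lets the ``leftover'' intersections $S\cap S_{A_{\ell'}}$ be discarded — together with checking that every error introduced (by \cref{lem:densityAB}, by the capture estimates, and by the $\mu(C_1^{(\ell)})$-small case) is a fixed polynomial in $\epsilon^{1/3}$ and so vanishes as $\epsilon\to0$.
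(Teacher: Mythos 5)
Your proposal is correct and follows essentially the same route as the paper's proof: you process $A_1,\dotsc,A_m$ in order, capture the not-yet-handled part of $S_{A_\ell}$ by an $\epsilon$-capturing steeplechase whose roots are arranged to be jointly prefix-free, run the chaining argument via \cref{lem:densityAB} and the hypothesis on consecutive sums $d_i+\dotsb+d_j$ to obtain $(\rho-1)\,\di_{C_1^{(\ell)}\cF}(S)+\dsup(S)\lesssim 1$ on each subtree, and deduce both the density bound and the ``moreover'' from near-equality, exactly as the paper does; the only real difference is that the paper intersects the outer sets of the chain with $S^{(\ell)}$ so that the overlaps with earlier $S_{A_{\ell'}}$ vanish exactly, whereas you keep $S$ and discard these overlaps approximately using their negligible density on $C_1^{(\ell)}\cF$, which is fine since $\mu(C_1^{(\ell)})\ge\epsilon^{1/3}$ in that case. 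One small slip: when $\mu(C_1^{(\ell)})<2\epsilon+\epsilon^{1/3}$ the bound $\di_{C_1^{(\ell)}\cF}(S)\lesssim 1/\rho$ is not ``immediate'' (the relative density could be close to $1$); such terms should instead be absorbed via $\mu(C_1^{(\ell)})\,\di_{C_1^{(\ell)}\cF}(S)\le 2\epsilon+\epsilon^{1/3}$ in the final sum, as you in effect already do in the ``moreover'' paragraph.
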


\begin{proof}
	Let $\epsilon > 0$ be sufficiently small. We define the following sets and steeplechases. Take $S^{(1)} \coloneqq S$, $R^{(1)} \coloneqq S_{A_1} \cap S^{(1)}$, and let $(C_k^{(1)})$ be an $\epsilon$-capturing steeplechase for $R^{(1)}$, as given by \cref{lem:steepleB}. For $\ell = 2, 3, \dotsc, m$, iteratively do the following:
	\begin{itemize}
		\item Set $S^{(\ell)} \coloneqq S \setminus (S_{A_1} \cup \dotsb \cup S_{A_{\ell - 1}} \cup (C_1^{(1)} \cup \dotsb \cup C_1^{(\ell - 1)})\cF)$ and $R^{(\ell)} \coloneqq S_{A_\ell} \cap S^{(\ell)}$.
		\item Take $(C_k^{(\ell)})$ to be an $\epsilon$-capturing steeplechase for $R^{(\ell)}$. By passing to a subsequence of the steeplechase we may and will assume that $\min C_1^{(\ell)} > \max C_1^{(\ell - 1)}$.
	\end{itemize}
	
	\begin{claim}\label{claim:densityinsteeple}
		For each $\ell \in [m]$, $\di(S^{(\ell)} \cap C_1^{(\ell)}\cF) \lesssim 1/\rho \cdot \mu(C_1^{(\ell)})$.
	\end{claim}
	
	\begin{proof}
	   Firstly, if $\mu(C_1^{(\ell)}) < 2\epsilon + \epsilon^{1/3}$, then
        \begin{equation*}
            \di(S^{(\ell)} \cap C_1^{(\ell)}\cF) \leq \di(C_1^{(\ell)} \cF) = \mu(C_1^{(\ell)}) \lesssim 1/\rho \cdot \mu(C_1^{(\ell)}).
        \end{equation*}
        Hence, we may assume from now on that $\mu(C_1^{(\ell)}) \geq 2\epsilon + \epsilon^{1/3}$. Consider the sets $S^{(\ell)}$ and $(R^{(\ell)})^{d_{\rho - 1}} S$. Note that $(R^{(\ell)})^{d_{\rho - 1}} \subset S_{d_{\rho - 1} A_{\ell}}$. Hence, if $S^{(\ell)}$ and $(R^{(\ell)})^{d_{\rho - 1}} S$ meet, then $S^{(\ell)} \cap S_{1 + d_{\rho - 1} A_{\ell}} \notempty$. By the proposition statement, this implies that $S^{(\ell)} \cap S_k \notempty$ or $S^{(\ell)} \cap S_{A_{\ell'}} \notempty$ (for $\ell' < \ell$). $k$-product-freeness rules out the former and the definition of $S^{(\ell)}$ the latter. Therefore, $S^{(\ell)}$ and $(R^{(\ell)})^{d_{\rho - 1}} S$ are disjoint and so,
		\begin{align*}
			\dsup(S^{(\ell)} \cup (R^{(\ell)})^{d_{\rho - 1}} S) & \geq \di_{C_1^{(\ell)}\cF}(S^{(\ell)} \cup (R^{(\ell)})^{d_{\rho - 1}} S) \\
			& = \di_{C_1^{(\ell)}\cF}(S^{(\ell)}) + \di_{C_1^{(\ell)}\cF}((R^{(\ell)})^{d_{\rho - 1}} S).
		\end{align*}
		Note that $(C_k^{(\ell)})$ is an $\epsilon$-capturing steeplechase for $R^{(\ell)}$ and so, by \cref{lem:densityAB},
		\begin{equation*}
			\dsup(S^{(\ell)} \cup (R^{(\ell)})^{d_{\rho - 1}} S) \gtrsim \di_{C_1^{(\ell)}\cF}(S^{(\ell)}) + \dsup(S).
		\end{equation*}
		Iterating this procedure, exactly as in the proofs of \cref{thm:pfdensity}\ref{thm:skpfdensity} and the $k = 3$ case above, gives
		\begin{equation}\label{eq:densityC1sup}
			\begin{split}
				1 \geq \dsup(S^{(\ell)} \cup (R^{(\ell)})^{d_1} S^{(\ell)} & \cup \dotsb \cup (R^{(\ell)})^{d_1 + \dotsb + d_{\rho - 2}} S^{(\ell)} \cup (R^{(\ell)})^{d_1 + \dotsb + d_{\rho - 1}}S) \\
				&\gtrsim (\rho - 1) \di_{C_1^{(\ell)}\cF}(S^{(\ell)}) + \dsup(S).
			\end{split}
		\end{equation}
		Now, $\dsup(S) \geq \dsup(S^{(\ell)}) \geq \di_{C_1^{(\ell)}\cF}(S^{(\ell)})$ and so $\di_{C_1^{(\ell)}\cF}(S^{(\ell)}) \lesssim 1/\rho$. The claim follows from \cref{obs:densityCFavg}.
	\end{proof}
	
	We next show that very little of $S$ has not been captured by the previous claim.
	
	\begin{claim}\label{claim:densityoutsteeple}
		For all large $n$, $\mu(S(n) \setminus \bigcup_{\ell} (S^{(\ell)} \cap C_1^{(\ell)} \cF)) \leq m \epsilon$.
	\end{claim}
	
	\begin{proof}
		For each $\ell$, $(C_k^{(\ell)})$ is an $\epsilon$-capturing steeplechase for $R^{(\ell)}$ and so, for all large $n$,
		\begin{equation*}
			\mu(R^{(\ell)}(n) \setminus C_1^{(\ell)}\cF) \leq \epsilon.
		\end{equation*}
		Hence, it is enough to show that $S\setminus \bigcup_{\ell} (S^{(\ell)} \cap C_1^{(\ell)} \cF)) \subset \bigcup_{\ell} (R^{(\ell)}\setminus C_1^{(\ell)}\cF)$. Fix $w \in S \setminus \bigcup_{\ell} (S^{(\ell)} \cap C_1^{(\ell)} \cF))$. Let $\ell$ be maximal with $w \in S^{(\ell)}$ (such an $\ell$ exists as $S^{(1)} = S$). Since $w \in S \setminus \bigcup_{\ell} (S^{(\ell)} \cap C_1^{(\ell)} \cF))$, we have $w \notin C_1^{(\ell)} \cF$. We claim that $w \in S_{A_\ell}$. If $\ell = m$, then this is immediate ($S_{A_m} = S_1 = S$). If $\ell < m$, then, by the maximality of $\ell$, we must have $w \in S_{A_\ell} \cup C_1^{(\ell)}\cF$ and so $w \in S_{A_\ell}$. Thus, $w \in (S_{A_\ell} \cap S^{(\ell)}) \setminus C_1^{(\ell)} \cF = R^{(\ell)} \setminus C_1^{(\ell)} \cF$, as required.
	\end{proof}
	
	We now note that $C_1^{(1)} \cF$, \ldots, $C_1^{(m)} \cF$ are pairwise disjoint. If not then some $w_i \in C_1^{(i)}$ is a prefix of some $w_j \in C_1^{(j)}$ (for $i \neq j$). Now, by construction, $\min C_1^{(\ell)} > \max C_1^{(\ell - 1)}$ for all $\ell$ and so $i < j$. On the other hand, $w_j \in C_1^{(j)} \subset R^{(j)} \subset S^{(j)}$ and so $w_j \notin C_1^{(i)} \cF$, a contradiction. In particular, $C_1^{(\ell)}$, \ldots, $C_1^{(m)}$ are pairwise disjoint and their union is prefix-free.
	
	We can now show that $\db(S) \leq 1/\rho$. Summing \cref{claim:densityinsteeple} over $\ell$ gives
    \begin{equation}\label{eq:densitySCrho}
		\di(\bigcup_\ell (S^{(\ell)} \cap C_1^{(\ell)}\cF)) \lesssim 1/\rho \cdot \mu(C_1^{(1)} \cup \dotsb \cup C_1^{(m)}) \leq 1/\rho.
	\end{equation}
	Then, by \cref{claim:densityoutsteeple}, we obtain $\di(S) \lesssim 1/\rho$. Noting that $\epsilon$ can be arbitrarily small we have $\db(S) = \di(S) \leq 1/\rho$.
	
	Finally suppose that $\di(S) = \db(S) = 1/\rho$. We must have `equality' in \cref{claim:densityinsteeple} and \eqref{eq:densitySCrho}. That is, $\mu(C_1^{(1)} \cup \dotsb \cup C_1^{(m)}) \gtrsim 1$ and $\di(S^{(\ell)} \cap C_1^{(\ell)} \cF) \gtrsim 1/\rho \cdot \mu(C_1^{(\ell)})$ for all $\ell \in [m]$. Take $\ell$ with $\mu(C_1^{(\ell)}) \geq 1/(2m)$. Then, by \cref{obs:densityCFavg}, $\di_{C_1^{(\ell)}\cF}(S^{(\ell)}) \gtrsim 1/\rho$. But then \eqref{eq:densityC1sup} gives $\dsup(S) \lesssim 1/\rho$. Since $\epsilon$ can be arbitrarily small, we have $\dsup(S) \leq 1/\rho$, as required.
\end{proof}

We now show that there is always a sequence of sets satisfying \cref{prop:Asequence}. The sequence chosen here is motivated by the proofs of \cref{prop:S1rempty,prop:S1dempty}.

\begin{proposition}\label{prop:Asequenceexist}
    For every integer $k \geq 2$ there is a sequence $A_1, \dotsc, A_m$ satisfying the hypothesis of \cref{prop:Asequence}.
\end{proposition}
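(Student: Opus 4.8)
The plan is to build the sequence so that it mirrors the downward induction in the proofs of \cref{prop:S1rempty,prop:S1dempty}. For each ``level'' $d \in \set{\rho - 1, \rho - 2, \dotsc, 1}$ I would place into the sequence (at least) one arithmetic progression of the form $A = \set{1, d + 1, 2d + 1, \dotsc, td + 1}$ --- common difference $d$, starting at $1$ --- with the levels appearing in decreasing order of $d$, and I would end with $A_m = \set{1}$. The lengths $t$ are to be fixed uniformly in $k$ (and, when $k$ is large relative to $\rho$, one takes short progressions, following ``case~1'' of \cref{prop:S1rempty} and of \cref{prop:S1dempty}); for the fallback clause below always to have a target it is also convenient to include a few auxiliary two-element progressions $\set{1, c + 1}$ and repeated copies of $\set{1}$. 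The one observation that makes the fallback clause usable is that $1 + c$ always lies in $1 + cA$ (take $c$ copies of $1 \in A$), so $\set{1, c + 1} \subseteq \set{1} \cup (1 + cA)$; hence a window sum $c = d_i + \dotsb + d_j$ is automatically handled as soon as the two-element progression $\set{1, c + 1}$ appears earlier in the sequence.

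For each $A_\ell$ I would take $d_1, \dotsc, d_{\rho - 1}$ to be the sequence of prepended powers appearing in the relevant case of \cref{prop:S1rempty} (for the level-$(\rho - 1)$ progression) or of \cref{prop:S1dempty} (for the lower levels), rescaled to the present setting; for $A_m = \set{1}$ one takes $d_1 = \dotsb = d_{\rho - 1} = 1$, so that the window sums $1, \dotsc, \rho - 1$ are handled by falling back to $\set{1, 2}, \dotsc, \set{1, \rho}$. With these choices every window sum $c$ is a multiple of the common difference of $A_\ell$, and, running through exactly the same case distinctions as in \cref{prop:S1rempty,prop:S1dempty}, one checks that each $c$ falls into one of two cases: either $c$ lies in the range for which $k - 1 \in cA_\ell$, equivalently $k \in 1 + cA_\ell$ --- the branch that in the structure proofs was a contradiction with $S_{1, k} = \varnothing$, i.e.\ with $k$-product-freeness --- or else some $A_{\ell'}$ belonging to a larger level (so $\ell' < \ell$) satisfies $A_{\ell'} \subseteq \set{1} \cup (1 + cA_\ell)$, using the observation above --- the branch that in the structure proofs was a contradiction with the maximality of $t$ or of $d$. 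Verifying that the relevant inequalities on $c$ hold (that the largest window sum does not exceed $k - 1$, while $k - 1$ still lies at most $c(td + 1)$ for the smallest window sum $c$) is exactly where the inequality $k - 1 \geq (\rho - t)t(t + 1)$ of \cref{lem:rhosize} is needed, just as in the structure proofs.

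The main difficulty I expect is bookkeeping rather than a single hard idea: one must fix, uniformly in $k$, all common differences, all lengths and all $d_i$-sequences so that every window sum is covered and every fallback points to a progression occurring strictly earlier; in particular $A_m = \set{1}$ can only fall back to two-element progressions $\set{1, c + 1}$, so all of the relevant ones must be present and must themselves be handled, which forces a careful ordering of the progressions. Finally, exactly as in \cref{lem:rhosize}, the bound $\rho(k) \leq 2\log_2 k + 2$ reduces matters to finitely many $k$ (those with small $\rho$), each of which is settled by an explicit choice of sequence; here, unlike for the structure statement \cref{thm:kpfstructure}, one must carry this out for $k \in \set{3, 5, 7, 13}$ as well.
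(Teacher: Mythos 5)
Your outline keeps the shape of the paper's construction (progressions $\set{1, d+1, \dotsc, td+1}$ for each common difference $d \le \rho - 1$, ordered by decreasing $d$, ending with $\set{1}$, with exceptional $k$ done by hand), but it has a genuine gap at the point where the structure proofs used maximality. In \cref{prop:S1rempty,prop:S1dempty} the window sum equal to the common difference $d$ (the case $S \cap w^d S \notempty$) was dispatched by the \emph{maximality of $t$}, i.e.\ by extending the progression by one term; in the setting of \cref{prop:Asequence} there is no maximality to appeal to, and the paper compensates by putting into the sequence \emph{every} progression $B_{d,t} = \set{1, d+1, \dotsc, td+1}$ with $1 \le t < (k-1)/d$, ordered by decreasing $t$, so that for the window sum $d$ the fallback is always the progression one term longer, the chain terminating because $B_{d,(k-1)/d}$ contains $k$. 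This forces the length $m$ of the sequence to grow with $k$, which is incompatible with your plan of ``one (or a few) progressions per level with lengths fixed uniformly in $k$''. Your proposed substitute --- falling back to the two-element set $\set{1, c+1}$, which is indeed always contained in $\set{1} \cup (1 + cA_\ell)$ --- only defers the problem: that two-element set must itself occur earlier and be handled, and for \emph{its} window sum equal to its own difference the same difficulty recurs. For differences $d$ with $2d > \rho - 1$ but $d < \rho - 2$ (such $d$ exist whenever $\rho \ge 7$, hence for infinitely many $k$) one can check that neither the option $k \in \set{1} \cup (1 + dB_{d,1})$ (which needs $k - 1 \le d(d+1)$, false for large $k$) nor a fallback to a higher-difference set contained in $\set{1, 1+d, \dotsc, 1+d(d+1)}$ (which would need a difference that is a multiple of $d$ and still at most $\rho-1$) is available, so a bounded family of short progressions cannot close the induction by the argument you sketch.

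Two smaller points. ``Repeated copies of $\set{1}$'' buy nothing: the first copy of $\set{1}$ faces exactly the difficulty of $A_m$, and once any copy is handled every later condition is trivially satisfied anyway, so repetition is idle. And the final reduction is not to ``finitely many $k$ (those with small $\rho$)'': for each small $\rho$ there are infinitely many admissible $k$, each requiring its own sequence (of length depending on $k$); what \cref{lem:rhosize} isolates is precisely the finite exceptional list $k \in \set{3,5,7,13}$ (together with the trivial $k=2$), and those are the only values settled by explicit hand-chosen sequences --- which is also what the paper does.
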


\begin{proof}
    We deal with the cases $k = 2, 3, 5, 7, 13$ first.
    \begin{itemize}
        \item $k = 2$: take $A_1 = \set{1}$ (with $d_1 = 1$),
        \item $k = 3$: take $A_1 = \set{1, 2}$ (with $d_1 = d_2 = 1$) and $A_2 = \set{1}$ (with $d_1 = d_2 = 1$),
        \item $k = 5$: take $A_1 = \set{1, 3}$ (with $d_1 = d_2 = 2$) and $A_2 = \set{1}$ (with $d_1 = d_2 = 2$),
        \item $k = 7$: take
        \begin{align*}
            & A_1 = \set{1, 3} &&(\text{with } d_1 = d_2 = d_3 = 2),\\
            & A_2 = \set{1, 2} &&(\text{with } d_1 = d_2 = d_3 = 1),\\
            & A_3 = \set{1, 4} &&(\text{with } d_1 = d_2 = d_3 = 1), \text{ and}\\
            & A_4 = \set{1} &&(\text{with } d_1 = d_2 = d_3 = 1),
        \end{align*}
        \item $k = 13$: take
        \begin{align*}
            & A_1 = \set{1, 4} &&(\text{with } d_1 = d_2 = d_3 = d_4 = 3),\\
            & A_2 = \set{1, 2} &&(\text{with } d_1 = d_2 = d_3 = d_4 = 3),\\
            & A_3 = \set{1, 3, 5, 7} &&(\text{with } d_1 = d_2 = d_3 = d_4 = 1),\\
            & A_4 = \set{1, 3} &&(\text{with } d_1 = d_2 = d_3 = d_4 = 1),\\
            & A_5 = \set{1, 5} &&(\text{with } d_1 = d_2 = d_3 = d_4 = 1), \text{ and}\\
            & A_6 = \set{1} &&(\text{with } d_1 = d_2 = d_3 = d_4 = 1).
        \end{align*}
    \end{itemize}
    We now turn to $k \notin \set{2, 3, 5, 7, 13}$. For positive integers $d$ and $t$, let $B_{d,t} \coloneqq  \set{1,d+1,2d+1,\dotsc,td+1}$. We construct $A_1, A_2, \dotsc$ by taking all the sets $B_{d,t}$ for $1 \le d \le \rho-1$ and $1 \le t < (k-1)/d$ in the order of decreasing $d$ and then decreasing $t$, and add the set $\set{1}$ to the end.

    Consider a set $A_\ell = B_{d,t}$. We need to show that $A_\ell$ satisfies \cref{prop:Asequence}. Let $s \in \bZ^+$ be maximal such that $d s \le \rho-1$, and $\alpha \in \bZ^+$ be minimal such that $\alpha d (d+1) \ge k-1$. For $1 \le i \le \rho-1$, define
    \begin{equation*}
        d_i = \begin{cases}
            \alpha d & \text{if } i \equiv 0 \bmod{s+1} \text{ and } \alpha \neq 2, \\
            d & \text{otherwise}.
        \end{cases}
    \end{equation*}
    For any $1 \le i \le j \le \rho-1$, we have that $d_i + d_{i+1} + \dots + d_j = \beta d$ for some integer $\beta$ satisfying $1 \le \beta \le \rho-1 + (\alpha-1) (\rho-1) / (s+1)$. Moreover, by definition of the $d_i$, either $\beta \ge \alpha$ or $\beta \le s$. Note that
    \begin{equation*}
        1 + (d_i + d_{i+1} + \dots + d_j) A_\ell = \set{1 + \beta d, 1 + (\beta + 1) d, \dots, 1 + \beta d (t d + 1)}.
    \end{equation*}
    If $\beta = 1$, then $B_{d,t+1} \subset \set{1} \cup (1 + (d_i + d_{i+1} + \dots + d_j) A_\ell)$. Now, either $k \in B_{d,t+1}$ or $B_{d,t+1} = A_{\ell'}$ for some $\ell' < \ell$ and so $A_\ell$ satisfies \cref{prop:Asequence}.

    If $1 < \beta \le s$, then $B_{\beta d,1} \subset \set{1} \cup (1 + (d_i + d_{i+1} + \dots + d_j) A_\ell)$. Since $d < \beta d \le d s \le \rho-1$, it holds that $B_{\beta d,1} = A_{\ell'}$ for some $\ell' < \ell$ and so $A_\ell$ satisfies \cref{prop:Asequence}.
    
    If $\beta \ge \alpha$, we claim that $k \in 1 + (d_i + d_{i+1} + \dots + d_j) A_\ell$. Since $k-1$ is a multiple of $d$, it suffices to show that $\beta d \le k - 1 \le \beta d (t d + 1)$. Firstly,
    \begin{equation*}
        \beta d (t d + 1) \ge \alpha d (d + 1) \ge k - 1.
    \end{equation*}
    For the second inequality, if $d \le \rho-2$, it holds that $\alpha d (d+1) = k-1$ as observed in the proof of \cref{prop:S1dempty}. Furthermore, we have
    \begin{equation*}
        \beta d \le (\rho + (\alpha-1) \rho / (s+1)) d \le (\rho + (\alpha-1) d) d.
    \end{equation*}
    where the second inequality follows from $d (s+1) \ge \rho$. This is less than $k-1 = \alpha d (d+1)$ if $d (\alpha-1) + \rho \le \alpha (d+1)$, which is true for $k \notin \set{2, 3, 5, 7, 13}$ as shown in the proof of \cref{prop:S1dempty}. On the other hand, if $d = \rho-1$, we have
    \begin{equation*}
        \beta d \le (d + (\alpha-1) d / (s+1)) d \le ((\alpha+1) / 2) d^2 \le ((\alpha+1) / 2) d (d+1).
    \end{equation*}
    If $\alpha \ge 3$, this is at most $(\alpha-1) d (d+1) < k-1$ as required. If $\alpha \le 2$, we can observe that $\beta \le \rho-1$ to obtain $\beta d \le \rho (\rho-1) \le k-1$ where the last inequality was shown in the proof of \cref{prop:S1rempty}. In all cases, we have $\beta d \le k-1$ as required. Hence, $k \in 1 + (d_i + d_{i+1} + \dots + d_j) A_\ell$, and so $A_\ell$ satisfies \cref{prop:Asequence}.

    Finally, for $A_\ell = \set{1}$, we can simply pick $d_1 = \dots = d_{\rho-1} = 1$. We then get that $B_{j-i+1,1} \subset \set{1} \cup (1 + (d_i + d_{i+1} + \dots + d_j) A_\ell)$. Since $1 \le j-i+1 \le \rho-1$, it holds that $B_{j-i+1,1} = A_{\ell'}$ for some $\ell' < \ell$ and so $A_\ell$ satisfies \cref{prop:Asequence}.
\end{proof}

\Cref{prop:Asequence,prop:Asequenceexist} combine to give \cref{thm:pfdensity}\ref{thm:kpfdensity} and so we have indeed proved \cref{thm:pfdensity} in this section, as promised.

\section{Product-free sets in the free group}\label{sec:pfdensityfree}

We now adapt our methods to the free group and prove \cref{thm:kpffree}. Throughout, $\fF$ denotes the free group on a finite alphabet $\cA$, and $S \subset \fF$ is a $k$-product-free set whose density we want to bound. We always assume that all words are in reduced form. Moreover, \defn{$A B$} denotes the product of two sets $A, B \subset \fF$ without cancellation, that is
\begin{equation*}
    A B \coloneqq \set{w \in \fF: \text{ there is a substring decomposition } w = a b \text{ with } a \in A \text{ and } b \in B}.
\end{equation*}
In particular, $C \fF$ consists of all words with a prefix in $C$. We equip $\fF$ with the measure \defn{$\mu$} defined as $\mu(w) = 1 / \abs{\fF(\abs{w})}$. If $\W = \alpha_1 \alpha_2 \dotsb$ is a random infinite word where each $\alpha_{i+1}$ is an independent uniformly random letter other than $\alpha_i^{-1}$, then
\begin{equation*}
    \mu(w) = \bP(\W \text{ hits } w) = \begin{cases}
        (2 \abs{\cA})^{-1} (2 \abs{\cA} - 1)^{-(\abs{w}-1)} & \text{if $w$ is not the empty word}, \\
        1 & \text{otherwise}.
    \end{cases}
\end{equation*}
As before, for $B \subset \fF$, $\mu(B) = \sum_{w \in B} \mu(w)$ is the expected number of times that $\W$ hits $B$. So, we can make the following observations corresponding to \cref{obs:measureC,obs:densityCF}.

\begin{observation}
    If $C \subset \fF$ is prefix-free, then $\mu(C) \leq 1$, $\mu(C \fF(n)) \leq \mu(C)$ for all $n$, and $\mu(C \fF(n)) = \mu(C)$ if $C$ is finite and $n \ge \max C$.
\end{observation}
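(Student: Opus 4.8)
The plan is to transfer the random-walk reasoning behind \cref{obs:measureC,obs:densityCF} essentially verbatim, the only new ingredient being that the reduced words of $\fF$ still carry a tree structure. First I would record the structural fact that every prefix of a reduced word is again reduced; hence, ordered by the prefix relation, $\fF$ is an infinite rooted tree in which the empty word has $2\abs{\cA}$ children and every other vertex has $2\abs{\cA}-1$ children. The random infinite word $\W = \alpha_1\alpha_2\dotsb$ of this section traces an infinite ray from the root of this tree, and its length-$n$ truncation $\W_n$ is the unique vertex of that ray at depth $n$. Just as in \cref{sec:density}, write $\W$ \emph{hits} $w$ to mean that $w$ is a prefix of $\W$ (equivalently $\W_{\abs{w}} = w$); then $\mu(w) = \bP(\W\text{ hits }w)$ and, for any $B \subset \fF$, $\mu(B) = \sum_{w\in B}\mu(w) = \bE\,\abs{\set{w \in B \colon \W\text{ hits }w}}$.

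The first assertion follows because distinct prefixes of the single infinite word $\W$ are nested, so $\W$ cannot hit two distinct members of a prefix-free set $C$; thus $\abs{\set{w \in C \colon \W\text{ hits }w}} \le 1$ pointwise, and taking expectations gives $\mu(C) = \bP(\W\text{ hits }C) \le 1$.

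For the remaining assertions I would note that $(C\fF)(n)$ consists of the reduced words of length exactly $n$ having a prefix in $C$, hence is itself prefix-free, so $\mu((C\fF)(n)) = \bP(\W\text{ hits }(C\fF)(n))$. Now $\W$ hits $(C\fF)(n)$ precisely when $\W_n$ has a prefix in $C$, i.e.\ precisely when $\W$ hits some $w \in C$ with $\abs{w} \le n$; therefore $\mu((C\fF)(n)) = \bP(\exists\, w \in C,\ \abs{w}\le n \colon \W\text{ hits }w) \le \bP(\W\text{ hits }C) = \mu(C)$. When $C$ is finite and $n \ge \max C$ the side condition $\abs{w}\le n$ holds automatically for every $w \in C$, so this inequality is in fact an equality and $\mu((C\fF)(n)) = \mu(C)$.

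There is no real obstacle here: the one point to get right is that prefixes of reduced words are reduced (so that $\fF$ with the prefix order is a tree and $\W$ a ray in it), after which the free-semigroup arguments carry over word for word; everything else is bookkeeping with $\W$.
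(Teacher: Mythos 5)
Your proof is correct and follows exactly the reasoning the paper intends: the paper states this observation without proof, relying on the hitting-probability interpretation $\mu(B)=\bE\abs{\set{w\in B\colon \W\text{ hits }w}}$ for the non-backtracking walk that it has just set up, and your argument (prefix-free sets are hit at most once; $\W$ hits $(C\fF)(n)$ exactly when it hits some $w\in C$ with $\abs{w}\le n$, with equality of events once $n\ge\max C$) is precisely that argument carried out in detail.
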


We now define the relative density of subsets of $\fF$ as follows.

\begin{definition}[relative density]
    Let $B \subset \fF$, and $\fG$ be a subsemigroup of $\fF$ with $\fG(n) \neq \emptyset$ for all sufficiently large $n$. If $I$ is an interval, then the \defn{relative density of $B$ in $\fG$ on interval $I$} is
    \begin{equation*}
        d_\fG^I(B) \coloneqq \frac{\mu(B(I) \cap \fG)}{\mu(\fG(I))} = \mu(\fG(I))^{-1} \sum_{n \in I} \mu(B(n) \cap \fG).
    \end{equation*}
    If $\fG(I) = \emptyset$, we will take the relative density to be $0$ by convention, and $d^I(B) \coloneqq d_\fF^I(B)$.
\end{definition}

The upper Banach density of $B$ is then $\db(B) = \limsup_{I \to \infty} d^I(B)$. At this point, we can again diagonalise to obtain a sequence $(I_j)$ is such that, for every $\fG$ and $B$ that we consider in our proofs, $(d_\fG^{I_j}(B))$ converges to some limit \defn{$\di_\fG(B)$}, and $\di(S) = \db(S)$. These limits are again additive. Also note that $d_\fG^I(B) = d^I(B \cap \fG) / d^I(\fG)$ and so $\di_\fG(B) = \di(B \cap \fG) / \di(\fG)$ if $\di(\fG) > 0$. We define sup density as follows.

\begin{definition}[sup density]
    For a set $B$, the \defn{sup density of $B$ in $\fG$} is
    \begin{equation*}
        \dsupG(B) \coloneqq \sup_{w \in \fG} \di_{w \fF \cap \fG}(B).
    \end{equation*}
\end{definition}

From now on, let $\fG = \fF^{\alpha\beta} \subset \fF$ be the subsemigroup of $\fF$ consisting of all words starting with $\alpha$ and ending in $\beta$ where $\alpha, \beta \in \cA \cup \cA^{-1}$ and $\alpha \neq \beta^{-1}$. A random sequence argument shows the following.

\begin{observation}\label{obs:weightsubtreeg}
    Let $C \subset \fG$ be finite and prefix-free. Then, for all $n \ge \max C + 2$,
    \begin{equation*}
        \mu((C \fF \cap \fG)(n)) \geq \frac{\mu(C)}{(2 \abs{\cA} - 1)^2}.
    \end{equation*}
\end{observation}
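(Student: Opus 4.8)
The plan is to reduce the statement to a short counting fact about the last two letters of a reduced word. Since $C$ is finite and prefix-free and $n \ge \max C$, a word of length $n$ lies in $C\fF$ if and only if it has a (unique) prefix in $C$; and since $C \subset \fG$, every word of $C$ starts with $\alpha$, so the only extra constraint imposed by $\fG$ on such a word is that it end in $\beta$. Thus $(C\fF \cap \fG)(n)$ consists exactly of the words $cv$ with $c \in C$, $\abs{v} = n - \abs{c}$, $cv$ reduced, and $cv$ ending in $\beta$. Using $\mu(cv) = \mu(c)(2\abs{\cA} - 1)^{-(n - \abs{c})}$ and $n - \abs{c} \ge 2$ for every $c \in C$, it suffices to prove: for any letter $\gamma \in \cA \cup \cA^{-1}$ and any $m \ge 2$, the number of reduced words $v = v_1 \cdots v_m$ with $v_1 \ne \gamma^{-1}$ and $v_m = \beta$ is at least $(2\abs{\cA}-1)^{m-2}$. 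Granting this and summing over $c$, we get $\mu((C\fF \cap \fG)(n)) \ge \sum_{c \in C} \mu(c)(2\abs{\cA}-1)^{-(n-\abs{c})}(2\abs{\cA}-1)^{n - \abs{c} - 2} = \mu(C)/(2\abs{\cA}-1)^2$, which is the claim.

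For the counting fact I would build $v$ from the front: $v_1$ has $2\abs{\cA}-1$ admissible values (everything but $\gamma^{-1}$), and so does each of $v_2, \dots, v_{m-2}$ given its predecessor, yielding $(2\abs{\cA}-1)^{m-2}$ reduced prefixes $v_1 \cdots v_{m-2}$. It then remains to pick $v_{m-1} \notin \set{v_{m-2}^{-1}, \beta^{-1}}$ and set $v_m = \beta$ (which is then automatically reduced, since $v_m \ne v_{m-1}^{-1}$). As $\set{v_{m-2}^{-1}, \beta^{-1}}$ has at most two elements and the alphabet $\cA \cup \cA^{-1}$ has $2\abs{\cA} \ge 4$ elements when $\abs{\cA} \ge 2$, there is always at least one admissible choice of $v_{m-1}$, which gives the bound. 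When $\abs{\cA} = 1$ the statement is degenerate: every word of $\fG$ is a power of $\alpha = \beta$, so $v = \beta^m$ is forced, $2\abs{\cA}-1 = 1$, and the claimed inequality reads $1 \ge 1$. One may phrase this step probabilistically instead, via the random reduced walk $\W = \alpha_1\alpha_2\dotsb$: conditioning on $\alpha_{n-2}$, one has $\bP(\alpha_n = \beta \mid \alpha_{n-2}) = (2\abs{\cA}-1)^{-2} \cdot \abs{\set{\delta : \delta \notin \set{\alpha_{n-2}^{-1}, \beta^{-1}}}} \ge (2\abs{\cA}-1)^{-2}$, and since the event $\{\W \text{ hits } C\}$ depends only on $\alpha_1, \dots, \alpha_{\max C}$ and $n \ge \max C + 2$, multiplying by $\bP(\W \text{ hits } C) = \mu(C)$ gives the result.

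I do not expect a real obstacle here: the argument is a two-step count. The only points that require a moment's care are noting that $C \subset \fG$ makes the initial-letter constraint free (so only the terminal letter of the word is in play), and the edge case in counting admissible penultimate letters — this is exactly what forces the constant $(2\abs{\cA}-1)^{-2}$ rather than something larger, and what necessitates treating $\abs{\cA} = 1$ separately.
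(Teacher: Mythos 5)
Your argument is correct, and it is essentially the paper's: the paper justifies this observation only with the one-line remark that ``a random sequence argument shows the following,'' and your conditioning argument (the walk hits $C$ using only the first $\max C$ letters, and from any position the last two steps land on $\beta$ with probability at least $(2\abs{\cA}-1)^{-2}$), together with its elementary counting version, is precisely that argument. The boundary cases you flag (the penultimate-letter exclusion and $\abs{\cA}=1$) are handled adequately, so there is nothing to add.
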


In particular, $\di(w\fF \cap \fG) > 0$ for all $w \in \fG$. As in \cref{lem:densitycancel}, subtree densities of $G$ satisfy the property that we may strip away prefixes.

\begin{lemma}\label{lem:densitycancelfree}
    If $w, v \in \fG$, then $\di_{w v \fF \cap \fG}(w B) = \di_{v \fF \cap \fG}(B)$.
\end{lemma}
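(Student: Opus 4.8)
The plan is to follow the proof of \cref{lem:densitycancel} essentially verbatim, the only new ingredient being that restricting to $\fG$ eliminates all cancellation, so that prepending $w$ is a clean operation. I would begin with two elementary observations. Since $w$ ends in $\beta$, $v$ starts in $\alpha$, and $\alpha \neq \beta^{-1}$, the word $wv$ is already reduced and lies in $\fG$ (it starts with the first letter of $w$, which is $\alpha$, and ends with the last letter of $v$, which is $\beta$); the same holds for $wb$ whenever $b \in v\fF$, so in that case the literal concatenation $wb$ is a genuine substring decomposition, witnessing $wb \in wB$ whenever also $b \in B$.

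Next I would set up the bijection. For $m \ge \abs{v}$ I claim that $b \mapsto wb$ is a bijection from $B(m) \cap v\fF \cap \fG$ onto $(wB)(m+\abs{w}) \cap wv\fF \cap \fG$, and likewise, taking $B = \fF$, from $(v\fF\cap\fG)(m)$ onto $(wv\fF\cap\fG)(m+\abs{w})$. Well-definedness and injectivity are immediate from the previous paragraph (one recovers $b$ by deleting the leading $w$). For surjectivity: if $x = wb'$ is a substring decomposition with $b' \in B$ and $x \in wv\fF\cap\fG$, then $b'$ has prefix $v$, hence $b' \in v\fF$; and $b'$ is a nonempty suffix of $x$, so it ends in $\beta$, while having prefix $v$ forces it to start in $\alpha$; thus $b' \in B \cap v\fF \cap \fG$. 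Since $\mu(u)$ depends only on $\abs{u}$, each of these two bijections scales the total $\mu$-measure by the \emph{same} constant factor $\mu_{m+\abs{w}}/\mu_m = (2\abs{\cA}-1)^{-\abs{w}}$, where $\mu_n$ denotes the common $\mu$-value of a length-$n$ word. Summing over the layers in an interval $I$ with $\min I \ge \abs{wv}+2$, this factor cancels between numerator and denominator of the relative density, yielding the exact identity
\[
    d^I_{wv\fF\cap\fG}(wB) = d^{I-\abs{w}}_{v\fF\cap\fG}(B),
\]
where $I-\abs{w}$ denotes the interval obtained by subtracting $\abs{w}$ from each element of $I$.

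It then remains only to absorb this bounded shift of the interval in the limit. Writing $N(J) = \mu(B(J)\cap v\fF\cap\fG)$ and $D(J) = \mu((v\fF\cap\fG)(J))$, the intervals $I$ and $I - \abs{w}$ differ in at most $\abs{w}$ layers, each contributing at most $\mu(v)$ to either sum, so $\abs{N(I) - N(I-\abs{w})} \le \abs{w}\mu(v)$ and similarly for $D$. On the other hand $N \le D$ always, and by \cref{obs:weightsubtreeg} we have $D(J) \ge (\abs{J}-1)\mu(v)/(2\abs{\cA}-1)^2$ once $\min J$ is large. A routine estimate then gives $\abs{d^I_{v\fF\cap\fG}(B) - d^{I-\abs{w}}_{v\fF\cap\fG}(B)} = O(\abs{w}/\abs{I})$. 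Setting $I = I_j$ and letting $j \to \infty$, the displayed identity shows that $d^{I_j}_{wv\fF\cap\fG}(wB)$ and $d^{I_j}_{v\fF\cap\fG}(B)$ have the same limit, i.e.\ $\di_{wv\fF\cap\fG}(wB) = \di_{v\fF\cap\fG}(B)$.

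The only genuinely delicate point is the reduced-word bookkeeping in the bijection: the argument works precisely because $\fG$ is chosen so that no cancellation can occur at either junction $w\mid v$ or $w\mid b$ — this is exactly the role of the hypotheses that words in $\fG$ start with $\alpha$, end with $\beta$, and $\alpha \neq \beta^{-1}$. The interval-shift step is entirely standard, with \cref{obs:weightsubtreeg} supplying the lower bound on the denominators needed to make the perturbation vanish.
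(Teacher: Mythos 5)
Your proof is correct and follows essentially the same route as the paper: the heart of both arguments is that, because words of $\fG$ start with $\alpha$, end with $\beta$, and $\alpha \neq \beta^{-1}$, prepending $w$ causes no cancellation and scales the measure of each relevant layer by exactly $(2\abs{\cA}-1)^{-\abs{w}}$, after which the shift from $I$ to $I-\abs{w}$ is absorbed in the limit using the denominator lower bound coming from \cref{obs:weightsubtreeg} (the paper phrases this as $\di(v\fF\cap\fG)>0$, hence $\mu((v\fF\cap\fG)(I_j))\to\infty$). The only cosmetic difference is that you make the scaling explicit via a layer-by-layer bijection and an $O(\abs{w}/\abs{I})$ estimate, where the paper uses $\mu(wX)=(2\abs{\cA}-1)^{-\abs{w}}\mu(X)$ together with $wv\fF\cap\fG = w(v\fF\cap\fG)$ and a sandwich bound with additive error $\abs{w}$.
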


\begin{proof}
    For $u \in \fG$, note that $\mu(w u) = a \cdot \mu(u)$ where $a = (2 \abs{A} - 1)^{-\abs{w}}$. So, if $X \subset \fG$ is finite, then $\mu(w X) = a \cdot \mu(X)$. Let $I$ be any interval with $\min I > \abs{wv}$. Then
    \begin{equation*}
        d^I_{wv\fF \cap \fG}(wB) = \frac{\mu((wB)(I) \cap wv\fF)}{\mu((wv\fF \cap \fG)(I))} = \frac{\mu(B(I - \abs{w}) \cap v\fF)}{\mu((v\fF \cap \fG)(I - \abs{w}))}.
    \end{equation*}
    where we used that $wv\fF \cap \fG = w (v\fF \cap \fG)$. For any $X \subset \fG$, the fact that $\mu(X(n)) \in [0, 1]$ implies that
    \begin{equation*}
        \abs{\mu(X(I)) - \mu(X(I - \abs{w}))} = \abs[\bigg]{\sum_{n \in I} \mu(X(n)) - \sum_{n \in I - \abs{w}} \mu(X(n))} \leq \abs{w}.
    \end{equation*}
    Therefore,
    \begin{equation*}
        \frac{\mu(B(I) \cap v\fF) - \abs{w}}{\mu((v\fF \cap \fG)(I)) + \abs{w}} \leq d^I_{wv\fF \cap \fG}(wB) \leq \frac{\mu(B(I) \cap v\fF) + \abs{w}}{\mu((v\fF \cap \fG)(I)) - \abs{w}}.
    \end{equation*}
    Set $I = I_j$ and take $j$ to infinity. From $\di(v\fF \cap \fG) > 0$ it follows $\mu((v\fF \cap \fG)(I_j)) \to \infty$. Hence, both bounds above tend to $\di_{v\fF \cap \fG}(B)$ and so $\di_{w v \fF \cap \fG}(w B) = \di_{v \fF \cap \fG}(B)$.
\end{proof}

We can also obtain the following analogue to \cref{obs:densityCFavg}.

\begin{observation}\label{obs:densityCFavgfree}
    Let $C \subset \fG$ be finite and prefix-free. Then
    \begin{equation*}
        \di_{C\fF \cap \fG}(B) = \sum_{c \in C} \frac{\di(c\fF \cap \fG)}{\di(C\fF \cap \fG)} \cdot \di_{c\fF \cap \fG}(B) = \frac{\di(B \cap C\fF)}{\di(C\fF \cap \fG)}.
    \end{equation*}
\end{observation}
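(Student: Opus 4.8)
The plan is to prove this in the same way as its free-semigroup counterpart \cref{obs:densityCFavg}, the one adjustment being that $\di(c\fF\cap\fG)$ now takes on the role played there by $\mu(c)$. The one place the semigroup proof does not transfer mechanically — and the main point requiring care — is that, whereas there $\di(C\cF)=\mu(C)$ holds for free, here one must separately check that the denominators $\di(c\fF\cap\fG)$ and $\di(C\fF\cap\fG)$ are strictly positive, so that every division (and the interchange of limit with quotient) is legitimate; this is precisely what \cref{obs:weightsubtreeg} supplies.

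First I would record a general quotient identity: for any set $\mathcal{H}$ with $\di(\mathcal{H})>0$ one has $\di_{\mathcal{H}}(B)=\di(B\cap\mathcal{H})/\di(\mathcal{H})$. This is exactly the computation made for $\mathcal{H}=\fF^{\alpha\beta}$ just before \cref{lem:densitycancelfree}: dividing numerator and denominator of $d^I_{\mathcal{H}}(B)=\mu(B(I)\cap\mathcal{H})/\mu(\mathcal{H}(I))$ by $\mu(\fF(I))=\abs{I}$ rewrites it as $d^I(B\cap\mathcal{H})/d^I(\mathcal{H})$, and letting $j\to\infty$ along $(I_j)$ gives the identity (all the sets we encounter have convergent relative densities along $(I_j)$ by the diagonalisation). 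Nothing in this uses that $\mathcal{H}$ is a subsemigroup, so I may apply it with $\mathcal{H}=C\fF\cap\fG$ and with $\mathcal{H}=c\fF\cap\fG$ for each $c\in C$. The positivity hypothesis holds because, by \cref{obs:weightsubtreeg}, a nonempty finite prefix-free $X\subset\fG$ satisfies $\mu((X\fF\cap\fG)(n))\ge\mu(X)/(2\abs{\cA}-1)^2$ for $n\ge\max X+2$, so averaging over $n\in I_j$ gives $\di(X\fF\cap\fG)\ge\mu(X)/(2\abs{\cA}-1)^2>0$; apply this with $X=\{c\}$ and with $X=C$ (the case $C=\emptyset$ being trivial).

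Next I would use prefix-freeness of $C$: the sets $(c\fF)_{c\in C}$ are pairwise disjoint with union $C\fF$, hence the sets $(c\fF\cap\fG)_{c\in C}$ are pairwise disjoint with union $C\fF\cap\fG$. Finite additivity of $\di$ over this disjoint union gives $\di(C\fF\cap\fG)=\sum_{c\in C}\di(c\fF\cap\fG)$ and $\di(B\cap C\fF\cap\fG)=\sum_{c\in C}\di(B\cap c\fF\cap\fG)$. Substituting $\di(B\cap c\fF\cap\fG)=\di_{c\fF\cap\fG}(B)\,\di(c\fF\cap\fG)$ (from the quotient identity) into the second of these, dividing through by $\di(C\fF\cap\fG)$, and recognising the left-hand side through the quotient identity applied to $C\fF\cap\fG$, yields
\[
\di_{C\fF\cap\fG}(B)=\frac{\di(B\cap C\fF\cap\fG)}{\di(C\fF\cap\fG)}=\sum_{c\in C}\frac{\di(c\fF\cap\fG)}{\di(C\fF\cap\fG)}\cdot\di_{c\fF\cap\fG}(B).
\]
Since $C\subset\fG$, whenever $B\subset\fG$ we have $B\cap C\fF=B\cap C\fF\cap\fG$ — and $B$ will be a subset of $\fG$ in every application of the observation — so the middle expression equals $\di(B\cap C\fF)/\di(C\fF\cap\fG)$, which is the stated formula. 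Apart from the positivity check flagged above, the remainder is pure bookkeeping with disjointness, finite additivity of $\di$, and the quotient identity, and I do not expect any difficulty.
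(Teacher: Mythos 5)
Your proposal is correct and follows essentially the same route as the paper: positivity of $\di(c\fF\cap\fG)$ and $\di(C\fF\cap\fG)$ via \cref{obs:weightsubtreeg}, the quotient identity $\di_{\mathcal H}(B)=\di(B\cap\mathcal H)/\di(\mathcal H)$, and finite additivity of $\di$ over the disjoint decomposition $C\fF\cap\fG=\bigcup_{c\in C}(c\fF\cap\fG)$. Your explicit remark that one needs $B\subset\fG$ (so that $B\cap C\fF=B\cap C\fF\cap\fG$) is a point the paper leaves implicit, but it holds in every application, so the two arguments coincide.
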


\begin{proof}
    Because $\di(c\fF \cap \fG) > 0$ for all $c \in C$, and therefore also $\di(C\fF \cap \fG) > 0$, it holds that
    \begin{equation*}
        \di_{c\fF \cap \fG}(B) = \frac{\di(B \cap c\fF)}{\di(c\fF \cap \fG)} \quad \text{ and } \quad \di_{C\fF \cap \fG}(B) = \frac{\di(B \cap C\fF)}{\di(C\fF \cap \fG)}.
    \end{equation*}
    Since $\di$ is additive, this implies that
    \begin{equation*}
        \di_{C\fF \cap \fG}(B) = \frac{\di(B \cap C\fF)}{\di(C\fF \cap \fG)} = \sum_{c \in C} \frac{\di(B \cap c\fF)}{\di(C\fF \cap \fG)} = \sum_{c \in C} \frac{\di(c\fF \cap \fG)}{\di(C\fF \cap \fG)} \cdot \di_{c\fF \cap \fG}(B). \qedhere
    \end{equation*}
\end{proof}

Steeplechases in the free group can be defined exactly as for the free semigroup.

\begin{definition}[steeplechase]
    An infinite sequence $(C_k)$ of subsets of $\fG$ is a \defn{steeplechase} if, for each positive integer $k$,
    \begin{itemize}
        \item each $C_k$ is prefix-free and finite,
        \item every word in $C_{k + 1}$ has a proper prefix in $C_k$ (in particular, $C_{k + 1} \fF \subset C_k \fF$).
    \end{itemize}
    Steeplechase $(C_k)$ is \defn{spread} if $\max C_k < \min C_{k + 1}$ for all $k$ and is \defn{$\epsilon$-tight} if, for all $m, n$, $\abs{\mu(C_m) - \mu(C_n)} \leq \epsilon$.
\end{definition}

The following lemma is an analogue to \cref{lem:steepleB}.

\begin{lemma}\label{lem:steepleBfree}
    Let $\epsilon > 0$ and $B \subset \fG$. There is an $\epsilon$-tight spread steeplechase $(C_k)$ such that
    \begin{itemize}
        \item $C_1 \cup C_2 \cup \dotsb \subset B$,
        \item for all $k$ and all large $n$ \textup{(}in terms of $k$\textup{)}, $\mu((B \setminus C_k \fF)(n)) \leq \epsilon$,
        \item for all $k$, $\mu(C_k) / \di(\fG) \geq \di_\fG(B) - \epsilon$.
    \end{itemize}
\end{lemma}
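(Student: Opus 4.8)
The plan is to mimic the proof of \cref{lem:steepleB} almost verbatim; the only essential change is that the third bullet is now relative to $\fG$, so the upper bound on $\mu(B(n))$ obtained there must be converted into a statement about the relative density $\di_\fG(B)$. First I would record that $\di(\fG) > 0$: the word $\alpha\beta$ is reduced and lies in $\fG$, so $\set{\alpha\beta}\fF \cap \fG \subset \fG$, and by \cref{obs:weightsubtreeg} we have $\di(\set{\alpha\beta}\fF \cap \fG) \geq \mu(\alpha\beta)/(2\abs{\cA} - 1)^2 > 0$, whence $\di(\fG) > 0$ by monotonicity of $\di$. I then set $\epsilon' \coloneqq \epsilon \cdot \di(\fG) > 0$ and run the whole construction with $\epsilon'$ in place of $\epsilon$.

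The construction itself is identical to that of \cref{lem:steepleB}: for $x \in B$ put $h(x) = \abs{\set{b \in B \colon b \text{ is a prefix of } x}}$, let $D_k = \set{x \in B \colon h(x) = k}$ (each prefix-free, hence of $\mu$-measure at most $1$), and iteratively choose $\ell_k$ with $\mu(D_k(\set{\ell_k + 1, \ell_k + 2, \dotsc})) \leq \epsilon'/2^k$, set $C_k = D_k(\set{1, \dotsc, \ell_k})$, and delete $(D_k \setminus C_k)\fF$ from $B$ (and from all later $D_i$). Writing $B'$ for the set that remains, the same reasoning as in \cref{lem:steepleB} shows that $(C_k)$ is a steeplechase with $C_1 \cup C_2 \cup \dotsb \subset B$, that $B'(n) \subset C_k\fF(n)$ whenever $n > \max\set{\ell_1, \dotsc, \ell_k}$, and that $\mu((B \setminus C_k\fF)(n)) \leq \sum_t \mu(D_t \setminus C_t) \leq \epsilon' \leq \epsilon$ for such $n$; this gives the first two bullets. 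Note that $B \subset \fG$ forces $C_k \subset D_k \subset \fG$ for all $k$, although $C_k \fF$ itself need not lie in $\fG$, which is precisely why the intersection with $\fG$ appears below.

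For the third bullet, I would use $B' \subset B \subset \fG$ to get $B'(n) \subset C_k\fF(n) \cap \fG(n)$, so that for $n \geq \max C_k$ with $n > \max\set{\ell_1, \dotsc, \ell_k}$,
\begin{equation*}
    \mu(B(n)) \leq \mu((C_k\fF \cap \fG)(n)) + \mu((B \setminus B')(n)) \leq \mu(C_k\fF(n)) + \epsilon' = \mu(C_k) + \epsilon',
\end{equation*}
where the last equality uses that $\mu(C_k\fF(n)) = \mu(C_k)$ for finite prefix-free $C_k \subset \fF$ and $n \geq \max C_k$ (the free-group analogue of \cref{obs:densityCF}). Summing over $n \in I_j$, dividing by $\mu(\fG(I_j)) = \sum_{n \in I_j} \mu(\fG(n))$ (legitimate since $B \subset \fG$), and unwinding the definition of $d_\fG^{I_j}$ gives $d_\fG^{I_j}(B) \leq (\mu(C_k) + \epsilon')/d^{I_j}(\fG)$; letting $j \to \infty$ yields $\di_\fG(B) \cdot \di(\fG) \leq \mu(C_k) + \epsilon'$, that is $\mu(C_k)/\di(\fG) \geq \di_\fG(B) - \epsilon'/\di(\fG) = \di_\fG(B) - \epsilon$, which is exactly the third bullet.

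Finally, exactly as in \cref{sec:steeple}, I would pass to a subsequence to make $(C_k)$ spread and $\epsilon$-tight: the sequence $(\mu(C_k))$ is non-increasing (since $C_{k+1}\fF \subset C_k\fF$ and $\mu(C\fF(n)) = \mu(C)$ for finite prefix-free $C$ and large $n$), hence Cauchy, which yields $\epsilon$-tightness after discarding finitely many terms, and the spreading argument from \cref{sec:steeple} uses only $\min C_k \geq k$ and the prefix structure, so it transfers unchanged; passing to a subsequence preserves all three bullets. The only point needing any care — and it is a bookkeeping point rather than a genuine obstacle — is the factor $\di(\fG)$ relating $\mu(C_k)$ to the relative density, which the rescaling $\epsilon \mapsto \epsilon\di(\fG)$ disposes of; everything else is a direct transcription of the free-semigroup proof, the main thing to verify being that the deletion/headcount procedure still outputs a valid steeplechase inside $\fG$, which it does because deletions only decrease headcounts or remove words outright.
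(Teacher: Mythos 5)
Your proposal is correct and follows essentially the same route as the paper's proof: the identical headcount/deletion construction with the thresholds rescaled by $\di(\fG)$ (the paper chooses $\ell_k$ so that the tail has measure at most $\epsilon \cdot \di(\fG)/2^k$, which is exactly your $\epsilon'/2^k$), followed by dividing the resulting bound $\di(B) \leq \mu(C_k) + \epsilon\,\di(\fG)$ by $\di(\fG) > 0$ to obtain the relative-density statement, and then passing to a subsequence to get spread and $\epsilon$-tight.
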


\begin{proof}
    This is very similar to the proof of \cref{lem:steepleB}. For each positive integer $k$, let $D_k = \set{x \in B \colon h(x) = k}$. Iteratively do the following for each positive integer $k$.
    \begin{enumerate}
        \item Let $\ell_k$ be such that $\mu(D_k(\set{\ell_k + 1, \ell_k + 2, \dotsc})) \leq \epsilon \cdot \di(\fG)/2^k$.
        \item Let $C_k = D_k(\set{1, 2, \dotsc, \ell_k})$.
        \item Remove $(D_k \setminus C_k)\fF$ from $B$ (including from all later $D_i$).
    \end{enumerate}
    Then $(C_k)$ is a steeplechase and $C_1 \cup C_2 \cup \dotsb \subset B$. Fix $k$ and let $n > \max\set{\ell_1, \dotsc, \ell_k}$. Then,
    \begin{equation*}
        \mu((B \setminus C_k \fF)(n)) \leq \sum_t \mu(((D_t \setminus C_t)\fF)(n)) \leq \sum_t \mu(D_t \setminus C_t) \leq \epsilon \cdot \di(\fG) \leq \epsilon.
    \end{equation*}
    Finally, this implies that $\mu(B(n)) \leq \mu(C_k\fF(n)) + \epsilon \cdot \di(\fG) = \mu(C_k) + \epsilon \cdot \di(\fG)$. Averaging this over $n \in I_j$ and taking $j \to \infty$ gives $\di(B) \leq \mu(C_k) + \epsilon \cdot \di(\fG)$ and therefore $\di_\fG(B) = \di(B) / \di(\fG) \leq \mu(C_k) / \di(\fG) + \epsilon$. By passing to a subsequence, we may assume that $(C_k)$ is spread and $\epsilon$-tight.
\end{proof}

We call the steeplechase $(C_k)$ given by \cref{lem:steepleBfree} an \defn{$\epsilon$-capturing steeplechase for $B$}. There is also an analogue to \cref{lem:steepleCw}.

\begin{lemma}\label{lem:steepleCwfree}
    Let $(C_k)$ be an $\epsilon$-tight spread steeplechase. For every $w \in \fG$ there is an $N$ such that the following holds. If $C = C_1 \cup C_2 \cup \dotsb \cup C_N$ and $n \geq \max C_N + \abs{w}$, then
    \begin{equation*}
        \mu(((C_1 \fF \cap \fG) \setminus (Cw \fF \cap \fG))(n)) \leq 2 \epsilon.
    \end{equation*}
\end{lemma}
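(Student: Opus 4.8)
The plan is to follow the proof of \cref{lem:steepleCw} essentially verbatim, the only genuinely new ingredients being that $\W$ is now the free-group random walk and that every set in sight is intersected with $\fG$. The latter is harmless: intersecting with $\fG$ only removes elements, so all of the relevant monotonicity inequalities still point the right way, and the only place the free group structure really matters is in evaluating one spelling probability.

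First we reduce to a statement about $C_N$ alone. Since $C_N\fF \subseteq C_1\fF$ with both $C_1$ and $C_N$ finite and prefix-free, for $n \ge \max C_N$ (hence also $n \ge \max C_1$, by spreadness) we have $\mu((C_1\fF)(n)) = \mu(C_1)$ and $\mu((C_N\fF)(n)) = \mu(C_N)$, so $\epsilon$-tightness gives
\begin{equation*}
    \mu\bigl(((C_1\fF\setminus C_N\fF)\cap\fG)(n)\bigr) \le \mu\bigl((C_1\fF\setminus C_N\fF)(n)\bigr) = \mu(C_1) - \mu(C_N) \le \epsilon .
\end{equation*}
Combining this with the containment
\begin{equation*}
    (C_1\fF\cap\fG)\setminus(Cw\fF\cap\fG) \subseteq \bigl((C_1\fF\setminus C_N\fF)\cap\fG\bigr) \cup \bigl((C_N\fF\cap\fG)\setminus(Cw\fF\cap\fG)\bigr) ,
\end{equation*}
it remains to bound $\mu\bigl(((C_N\fF\cap\fG)\setminus(Cw\fF\cap\fG))(n)\bigr)$ by $\epsilon$ for $n \ge \max C_N + \abs{w}$. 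As in \cref{lem:steepleCw} we set $X \coloneqq \set{s \in C_N \colon s \text{ has no prefix in } Cw}$: any $v \in C_N\fF\cap\fG$ with no prefix in $Cw$ has its unique $C_N$-prefix in $X$, so $((C_N\fF\cap\fG)\setminus(Cw\fF\cap\fG))(n) \subseteq (X\fF)(n)$ and the measure in question is at most $\mu(X)$. Since $X$ is prefix-free, $\mu(X) = \bP(\W \text{ hits } X)$, and we must show this is at most $\epsilon$ once $N$ is large in terms of $\abs{w}$ and $\abs{\cA}$.

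The final step is the letter-by-letter reveal of \cref{lem:steepleCw}. Let $K$ be largest with $1 + K\abs{w} \le N - \abs{w}$. If $\W$ hits $X$ then it hits $C_N$, hence (steeplechase property) it hits each of $C_1, C_{1+\abs{w}}, \dotsc, C_{1+K\abs{w}}$; and because each of these indices is at most $N - \abs{w}$, spreadness forces every word of $C_{1+i\abs{w}}$ to be shorter by at least $\abs{w}$ than every word of $C_N$, so immediately after each of these $K+1$ hits the next $\abs{w}$ letters of $\W$ must fail to spell $w$ (otherwise $\W$ would acquire a prefix in $Cw$ that lies inside the $X$-word it hits). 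The one point that uses the free group rather than the free semigroup is the value of each spelling probability: since $C \subseteq \fG$ every $c \in C$ ends in $\beta$, and since $w \in \fG$ it begins with $\alpha \ne \beta^{-1}$ and is reduced, so the conditional probability that the next $\abs{w}$ letters of $\W$ spell $w$ is exactly $(2\abs{\cA}-1)^{-\abs{w}}$, independently of which $c$ was hit; as the $K+1$ checks involve disjoint blocks of fresh letters they are independent, giving $\bP(\W \text{ hits } X) \le (1 - (2\abs{\cA}-1)^{-\abs{w}})^{K+1}$, which is below $\epsilon$ once $N$ (and hence $K$) is large enough. I expect the main obstacle to be purely bookkeeping: pinning down the spreadness inequalities that guarantee the hits on the $C_{1+i\abs{w}}$ occur in order and strictly before any word of $C_N$ is reached, and verifying that the spelling probability is the clean, word-independent value $(2\abs{\cA}-1)^{-\abs{w}}$ — both of which hold precisely because $C$ and $w$ lie inside $\fG$.
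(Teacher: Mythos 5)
Your proposal is correct and follows essentially the same route as the paper: drop the intersection with $\fG$ (the paper notes $(C_1 \fF \cap \fG) \setminus (Cw \fF \cap \fG) \subset C_1\fF \setminus Cw\fF$ and then repeats the proof of \cref{lem:steepleCw}), run the $\epsilon$-tightness plus prefix-free-$X$ plus letter-revealing argument, and observe that each spelling check succeeds with probability exactly $(2\abs{\cA}-1)^{-\abs{w}}$ because every word of $C$ ends in $\beta$, $w$ begins with $\alpha \neq \beta^{-1}$, and $w$ is reduced. The only cosmetic difference is that you spell out the $C_1$-versus-$C_N$ decomposition and the spreadness bookkeeping explicitly rather than citing them wholesale from \cref{lem:steepleCw}; the substance is identical.
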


\begin{proof}
    Note that $(C_1 \fF \cap \fG) \setminus (Cw \fF \cap \fG) = (C_1 \fF \setminus Cw \fF) \cap \fG \subset C_1 \fF \setminus Cw \fF$, and so it suffices to show that $\mu((C_1 \fF \setminus Cw\fF)(n)) \leq 2 \epsilon$.
    
    We can show this exactly as in the proof of \cref{lem:steepleCw}, we only need $\W$ to be the random walk from the beginning of this section. As a consequence, if $\W$ hits $C_i$, the probability that the next $\abs{w}$ letters of $\W$ spell $w$ is $(2 \abs{\cA} - 1)^{-\abs{w}}$. Importantly, this uses the fact that the last letter of a word in $C_i$ is $\beta$ and the first letter of $w$ is $\alpha$, and $\alpha \neq \beta^{-1}$.
\end{proof}

Now we can prove the key technical lemma for our density results, corresponding to \cref{lem:densityAB}.

\begin{lemma}\label{lem:densityABfree}
    Let $\epsilon > 0$ and let $A, B \subset \fG$. If $(C_k)$ is an $\epsilon$-capturing steeplechase for $A$ with $\mu(C_1) \geq (2 \abs{\cA} - 1)^2 (2 \epsilon + \epsilon^{1/3})$, then
    \begin{equation*}
        \di_{C_1 \fF \cap \fG}(AB) \geq \dsupG(B) - 3 \epsilon^{1/3}.
    \end{equation*}
\end{lemma}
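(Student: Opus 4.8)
The plan is to transcribe the proof of \cref{lem:densityAB} almost verbatim, systematically replacing each subtree $w\cF$ by $w\fF \cap \fG$ and replacing each use of \cref{lem:steepleCw}, \cref{obs:densityCFavg}, \cref{lem:densitycancel} and the layer-mass identity \cref{obs:densityCF} by the free-group counterparts \cref{lem:steepleCwfree}, \cref{obs:densityCFavgfree}, \cref{lem:densitycancelfree} and \cref{obs:weightsubtreeg}. First I would fix $w \in \fG$ with $\di_{w\fF \cap \fG}(B) \ge \dsupG(B) - \epsilon^{1/3}$, which exists by the definition of $\dsupG$. Applying \cref{lem:steepleCwfree} to $(C_k)$ and this $w$ gives an $N$ so that, with $C = C_1 \cup \dots \cup C_N$, one has $\mu\bigl(((C_1\fF \cap \fG) \setminus (Cw\fF \cap \fG))(n)\bigr) \le 2\epsilon$ for all $n \ge \max C_N + \abs{w}$. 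Exactly as in the semigroup proof I would greedily thin $C$ (shortest words first) to a set $\widetilde{C} \subset C$ with $\widetilde{C}w$ prefix-free and $\widetilde{C}w\fF = Cw\fF$; note $\widetilde{C} \subset C \subset A$, that every $c \in C$ has a prefix in $C_1$ so $\widetilde{C}w\fF \subset C_1\fF$, and that since $c \in \fG$ ends in $\beta$ while $w$ starts with $\alpha \neq \beta^{-1}$, each $cw$ is a reduced concatenation lying in $\fG$, so $\widetilde{C}w \subset \fG$.

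The heart of the matter is comparing relative densities in $C_1\fF \cap \fG$ and in $\widetilde{C}w\fF \cap \fG$. Set $g_n = \mu((C_1\fF \cap \fG)(n))$ and $h_n = \mu((\widetilde{C}w\fF \cap \fG)(n))$. For $n \ge \max C_N + \abs{w}$ one gets $h_n \le g_n \le h_n + 2\epsilon$ from \cref{lem:steepleCwfree} (using $\widetilde{C}w\fF = Cw\fF$), while \cref{obs:weightsubtreeg} and the hypothesis give $g_n \ge \mu(C_1)/(2\abs{\cA}-1)^2 \ge 2\epsilon + \epsilon^{1/3}$, hence $h_n \ge \epsilon^{1/3}$. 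Then, for any $X \subset \fG$ and interval $I$ with $\min I \ge \max C_N + \abs{w}$, using $X(n) \cap \widetilde{C}w\fF \subset \widetilde{C}w\fF \cap \fG$ and $\widetilde{C}w\fF \subset C_1\fF$,
\[
 d^I_{C_1\fF \cap \fG}(X) = \frac{\sum_{n \in I}\mu(X(n) \cap C_1\fF)}{\sum_{n \in I} g_n} \ge \frac{\sum_{n \in I}\mu(X(n) \cap \widetilde{C}w\fF)}{\sum_{n \in I} h_n + 2\epsilon\abs{I}} \ge d^I_{\widetilde{C}w\fF \cap \fG}(X) - 2\epsilon^{2/3},
\]
where the last bound is $P/(Q+\delta) \ge P/Q - \delta/Q$ for $0 \le P \le Q$ with $Q = \sum_{n \in I} h_n \ge \abs{I}\epsilon^{1/3}$ and $\delta = 2\epsilon\abs{I}$. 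Taking $X = AB$ (which lies in $\fG$, since $A, B \subset \fG$ and the join-letter condition prevents cancellation), $I = I_j$, and $j \to \infty$ yields $\di_{C_1\fF \cap \fG}(AB) \ge \di_{\widetilde{C}w\fF \cap \fG}(AB) - 2\epsilon^{2/3}$.

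To finish I would apply \cref{obs:densityCFavgfree} to the finite prefix-free set $\widetilde{C}w$, re-indexing by $c' = cw$ (the map $c \mapsto cw$ is injective):
\[
 \di_{\widetilde{C}w\fF \cap \fG}(AB) = \sum_{c \in \widetilde{C}} \frac{\di(cw\fF \cap \fG)}{\di(\widetilde{C}w\fF \cap \fG)}\, \di_{cw\fF \cap \fG}(AB) \ge \sum_{c \in \widetilde{C}} \frac{\di(cw\fF \cap \fG)}{\di(\widetilde{C}w\fF \cap \fG)}\, \di_{cw\fF \cap \fG}(cB) = \di_{w\fF \cap \fG}(B),
\]
where the inequality uses $c \in A$, so $cB \subset AB$, the last equality is \cref{lem:densitycancelfree} (with prefixes $c, w \in \fG$), and all denominators $\di(cw\fF \cap \fG)$ are positive by the remark after \cref{obs:weightsubtreeg}. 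Chaining the two displays with the choice of $w$ and with $\epsilon^{2/3} \le \epsilon^{1/3}$ gives $\di_{C_1\fF \cap \fG}(AB) \ge \dsupG(B) - 3\epsilon^{1/3}$.

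The one point that genuinely needs care, rather than a real obstacle, is the relative-density comparison in the second paragraph: in the free semigroup $\mu((C_1\cF)(n))$ equals the constant $\mu(C_1)$ on every layer, so one divides by a single number, whereas in the free group the layer masses $g_n, h_n$ fluctuate with $n$ and one must work with their sums over $I$. This is exactly where the factor $(2\abs{\cA}-1)^2$ in the hypothesis enters, via \cref{obs:weightsubtreeg}, to keep each $h_n$ — and hence $\sum_{n \in I} h_n$ — bounded below by a constant multiple of $\abs{I}$. Everything else (the greedy extraction of $\widetilde{C}$, the cancellation-free decomposition $cw$, and the averaging) transfers from the semigroup proof with only cosmetic changes.
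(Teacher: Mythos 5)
Your proposal is correct and follows essentially the same route as the paper's own proof: the same choice of $w$, the same use of \cref{lem:steepleCwfree} and the greedy thinning to $\widetilde{C}$, the same layer-mass comparison (your elementary bound even yields $2\epsilon^{2/3}$ in place of the paper's $2\epsilon^{1/3}$, which you then harmlessly weaken, valid since the statement is trivial for $\epsilon\geq 1$), and the same final averaging via \cref{obs:densityCFavgfree} and \cref{lem:densitycancelfree}. The extra justifications you supply (that $\widetilde C w\subset\fG$, that $AB\subset\fG$, and the role of \cref{obs:weightsubtreeg} in keeping the layer masses bounded below) are exactly the points the paper leaves implicit.
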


\begin{proof}
    \Cref{obs:weightsubtreeg} implies that $\mu((C_1 \fF \cap \fG)(n)) \geq \mu(C_1) / (2 \abs{\cA} - 1)^2 \geq 2 \epsilon + \epsilon^{1/3}$ for $n \geq \max C_1 + 2$. We proceed as in the proof of \cref{lem:densityAB}. Let $w \in \fG$ be such that
    \begin{equation*}
        \di_{w \fF \cap \fG}(B) \geq \dsupG(B) - \epsilon^{1/3}.
    \end{equation*}
    Apply \cref{lem:steepleCwfree} to $(C_k)$ and $w$ to give an $N$ such that letting $C = C_1 \cup \dotsb \cup C_N$, if $n \geq \max C_N + \abs{w}$, then $\mu(((C_1 \fF \cap \fG) \setminus (Cw \fF \cap \fG))(n)) \leq 2 \epsilon$. We may greedily choose $\widetilde{C} \subset C$ such that $\widetilde{C}w$ is prefix-free and $\widetilde{C}w \fF = Cw \fF$. Note that
    \begin{align*}
        2 \epsilon & \geq \mu(((C_1 \fF \cap \fG) \setminus (\widetilde{C}w \fF \cap \fG))(n)) \\
        & \geq \mu((C_1 \fF \cap \fG)(n)) - \mu((\widetilde{C}w \fF \cap \fG)(n)) \\
        & \geq 2 \epsilon + \epsilon^{1/3} - \mu((\widetilde{C}w \fF \cap \fG)(n))
    \end{align*}
    and so $\mu((\widetilde{C}w \fF \cap \fG)(n)) \geq \epsilon^{1/3}$ as well as $\mu((C_1 \fF \cap \fG)(n)) \leq \mu((\widetilde{C}w \fF \cap \fG)(n)) + 2 \epsilon$.
    
    Let $I$ be an interval with $\min I \geq \max C_N + \abs{w}$, so $\mu((\widetilde{C}w \fF \cap \fG)(I)) \geq \abs{I} \epsilon^{1/3}$ and $\mu((C_1 \fF \cap \fG)(I)) \leq \mu((\widetilde{C}w \fF \cap \fG)(I)) + \abs{I} 2 \epsilon$. Let $X \subset \fG$. Note that $\widetilde{C}w \fF \subset C_1 \fF$ and so
    \begin{align*}
        d^I_{C_1 \fF \cap \fG}(X) & = \frac{\mu(X(I) \cap C_1 \fF)}{\mu((C_1 \fF \cap \fG)(I))} \geq \frac{\mu(X(I) \cap \widetilde{C} w \fF)}{\mu((C_1 \fF \cap \fG)(I))} \geq \frac{\mu(X(I) \cap \widetilde{C} w \fF)}{\mu((\widetilde{C}w \fF \cap \fG)(I)) + \abs{I} 2 \epsilon}.
    \end{align*}
    Using the fact that $x/(y + \abs{I} 2 \epsilon) \geq x/y - \abs{I} 2 \epsilon x/y^2 \geq x/y - 2 \epsilon^{1/3}$ for $\epsilon > 0$, $0 \leq x \leq \abs{I}$, and $y \geq \abs{I} \epsilon^{1/3}$, we have
    \begin{equation*}
        d^I_{C_1 \fF \cap \fG}(X) \geq d^I_{\widetilde{C}w\fF \cap \fG}(X) - 2 \epsilon^{1/3}.
    \end{equation*}
    Setting $X = AB$, $I = I_j$, and taking $j$ to infinity gives
    \begin{equation}\label{eq:lem:densityABfree:1}
        \di_{C_1 \fF \cap \fG}(AB) \geq \di_{\widetilde{C}w \fF \cap \fG}(AB) - 2 \epsilon^{1/3}.
    \end{equation}
    Now,
    \begin{align*}
        \di_{\widetilde{C}w \fF \cap \fG}(AB) & = \sum_{c \in \widetilde{C}} \frac{\di(cw\fF \cap \fG)}{\di(\widetilde{C}w\fF \cap \fG)} \cdot \di_{c w \fF \cap \fG}(AB) \\
        & \geq \sum_{c \in \widetilde{C}} \frac{\di(cw\fF \cap \fG)}{\di(\widetilde{C}w\fF \cap \fG)} \cdot \di_{c w \fF \cap \fG}(cB) \\
        & = \sum_{c \in \widetilde{C}} \frac{\di(cw\fF \cap \fG)}{\di(\widetilde{C}w\fF \cap \fG)} \cdot \di_{w \fF \cap \fG}(B) \\
        & = \di_{w \fF \cap \fG}(B) \geq \dsupG(B) - \epsilon^{1/3},
    \end{align*}
    where the first equality used \cref{obs:densityCFavgfree}, the first inequality used the fact that $c \in \widetilde{C} \subset C \subset A$, the second equality used \cref{lem:densitycancelfree}, and the second inequality is due to the choice of $w$. Combining this with \eqref{eq:lem:densityABfree:1} gives the required result.
\end{proof}

At this point, we have recovered all important technical results that we needed to bound the density of (strongly) $k$-product-free sets in the free semigroup. We can now simply use exactly the same arguments as in \cref{sec:pfdensity}. We only need to replace $C_i \cF$ by $C_i \fF \cap \fG$, $\mu(C_i)$ by $\di(C_i \fF \cap G)$, $\dsup$ by $\dsupG$, and all references by references to the corresponding results in this section to prove the following analogue of \cref{thm:pfdensity}.

\begin{theorem}\label{thm:pfdensityg}
    Let $k \geq 2$ be an integer, $\cA$ be a finite set, $\fF$ be the free group with alphabet $\cA$, and $\fG = \fF^{\alpha\beta}$ be the subsemigroup of $\fF$ consisting of all words starting with $\alpha$ and ending with $\beta$ where $\alpha, \beta \in \cA \cup \cA^{-1}$ and $\alpha \neq \beta^{-1}$.
    \begin{enumerate}[label = {\textup{(\alph{*})}}]
        \item If $S \subset \fG$ is strongly $k$-product-free, then $\di_\fG(S) \leq 1/k$. Moreover, if $\di_\fG(S) = 1/k$, then $\dsupG(S) = 1/k$. \label{thm:skpfdensityg}
        \item If $S \subset \fG$ is $k$-product-free, then $\di_\fG(S) \leq 1/\rho(k)$. Moreover, if $\di_\fG(S) = 1/\rho(k)$, then $\dsupG(S) = 1/\rho(k)$. \label{thm:kpfdensityg}
    \end{enumerate}
\end{theorem}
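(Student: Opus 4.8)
The plan is to run the arguments of \cref{sec:pfdensity} essentially verbatim, replacing each tool used there by the free-group analogue established in this section: each subtree $C_i\cF$ becomes $C_i\fF \cap \fG$, each subtree mass $\mu(C_i)$ becomes $\di(C_i\fF \cap \fG)$, the sup density $\dsup$ becomes $\dsupG$, and each citation to \cref{sec:density,sec:technical,sec:steeple} is replaced by its counterpart here (\cref{lem:densityABfree} for \cref{lem:densityAB}, \cref{lem:steepleBfree} for \cref{lem:steepleB}, \cref{lem:steepleCwfree} for \cref{lem:steepleCw}, \cref{obs:densityCFavgfree} for \cref{obs:densityCFavg}, \cref{lem:densitycancelfree} for \cref{lem:densitycancel}). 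Two structural observations make this substitution legitimate. First, if $S \subset \fG$ is (strongly) $k$-product-free in $\fF$, then each no-cancellation product $S^2, S^3, \dotsc$ again lies in $\fG$ (the first and last letters are inherited and are not inverse, since $\alpha \neq \beta^{-1}$), and $S$ is disjoint from $S^j$ for the relevant $j$, because a no-cancellation concatenation of elements of $S$ equals, as a group element, their ordinary product; the same remark shows $S_{1,k} = \emptyset$ in the sense of \cref{def:SA} and validates the ``no-cancellation products compose'' bookkeeping that the proof of \cref{prop:Asequence} relies on. Second, \cref{obs:weightsubtreeg} gives $\di(w\fF \cap \fG) > 0$ for every $w \in \fG$, so every relative density $\di_{w\fF\cap\fG}(\cdot)$ and $\di_{C\fF\cap\fG}(\cdot)$ appearing in the argument is well defined, additive, and may be divided by.

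For part~\ref{thm:skpfdensityg} one takes an $\epsilon$-capturing steeplechase $(C_k)$ for $S$ from \cref{lem:steepleBfree}. If $\mu(C_1)$ lies below the threshold $(2\abs{\cA}-1)^2(2\epsilon + \epsilon^{1/3})$ of \cref{lem:densityABfree}, then \cref{lem:steepleBfree} already forces $\di_\fG(S) < 1/k$, since $\di(\fG)$ is a fixed positive constant depending only on $\abs{\cA}$ and the numerator $\di(S) \leq \mu(C_1) + \epsilon\,\di(\fG)$ is small. Otherwise, iterating \cref{lem:densityABfree} exactly as in the proof of \cref{thm:pfdensity}\ref{thm:skpfdensity} — using at the $j$-th step that $S$ is disjoint from $S^{j+1}$ — yields $1 \geq (k-1)\,\di_{C_1\fF\cap\fG}(S) + \dsupG(S)$; since $\di_{C_1\fF\cap\fG}(S) = \di(S\cap C_1\fF)/\di(C_1\fF\cap\fG) \geq \di(S\cap C_1\fF)/\di(\fG) \geq \di_\fG(S) - \epsilon$ by \cref{obs:densityCFavgfree,lem:steepleBfree}, and $\dsupG(S) \geq \di_\fG(S)$, letting $\epsilon \to 0$ gives $\di_\fG(S) \leq 1/k$, with the ``moreover'' clause following as before.

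For part~\ref{thm:kpfdensityg} the reduction is identical to the free-semigroup case: it suffices to establish the free-group analogues of \cref{prop:Asequence,prop:Asequenceexist}. \Cref{prop:Asequenceexist} is a purely arithmetic statement about finite subsets of $\bN$ and carries over word for word. For the analogue of \cref{prop:Asequence} one transcribes its proof with the substitutions above: \cref{claim:densityinsteeple} becomes $\di(S^{(\ell)} \cap C_1^{(\ell)}\fF) \lesssim \tfrac1\rho\,\di(C_1^{(\ell)}\fF\cap\fG)$, proved by the same iteration of \cref{lem:densityABfree} (with the below-threshold case handled as in part~\ref{thm:skpfdensityg}); \cref{claim:densityoutsteeple} is unchanged; the subtrees $C_1^{(1)}\fF, \dotsc, C_1^{(m)}\fF$ are pairwise disjoint by the same prefix argument, so by additivity $\sum_\ell \di(C_1^{(\ell)}\fF\cap\fG) \leq \di(\fG) \leq 1$; summing the claim over $\ell$ then gives $\di_\fG(S) \lesssim 1/\rho$, and the ``moreover'' part follows exactly as in the semigroup proof.

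The only genuine work — conceptually routine but easy to get wrong — is this normalisation bookkeeping: in $\fF$ the natural density is $\di_\fG$ rather than $\di$, subtree masses carry a factor $\di(\fG)$, the hypothesis of \cref{lem:densityABfree} carries a factor $(2\abs{\cA}-1)^2$, and the steeplechase cut-offs in \cref{lem:steepleBfree} carry a factor $\di(\fG)$; one must check at each step that these fixed constants only perturb the implicit error hidden in $\lesssim$ (and dictate the below-threshold branch) and that every division by $\di(w\fF\cap\fG)$ or $\di(C\fF\cap\fG)$ is legitimate, which \cref{obs:weightsubtreeg} guarantees. Once \cref{thm:pfdensityg} is in hand, \cref{thm:kpffree} then follows by passing to the subsemigroups $\fF^{\alpha\beta}$.
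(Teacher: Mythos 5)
Your proposal is correct and follows essentially the same route as the paper, which likewise proves \cref{thm:pfdensityg} by rerunning the arguments of \cref{sec:pfdensity} with $C_i\cF$ replaced by $C_i\fF \cap \fG$, $\mu(C_i)$ by $\di(C_i\fF \cap \fG)$, $\dsup$ by $\dsupG$, and each technical lemma replaced by its free-group analogue (\cref{lem:densityABfree,lem:steepleBfree,lem:steepleCwfree,obs:densityCFavgfree,lem:densitycancelfree}). Your additional checks — that no-cancellation products of elements of $\fG$ stay in $\fG$ and are genuine group products (so $k$-product-freeness gives the needed disjointness), and that \cref{obs:weightsubtreeg} legitimises every division — are exactly the points the paper leaves implicit.
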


The arguments from Ortega, Ru\'{e}, and Serra \cite{ORS23} show that a density bound on (strongly) $k$-product-free sets in $\fF^{\alpha\beta}$ immediately translates to a density bound in $\fF$. Therefore, \cref{thm:ORS,thm:kpffree} are immediate corollaries of \cref{thm:pfdensityg}.

\section{Open problems}\label{sec:open}

A first natural problem left open is to determine the structure of the extremal $k$-product-free sets for $k \in \set{3, 5, 7, 13}$. For $k = 5, 7, 13$, we conjecture that the extremal sets are exactly as in \cref{thm:kpfstructure}. The extremal sets for $k = 3$ will be slightly more complicated. Indeed, while $1 + 3 \ZNN$ and $2 + 3\ZNN$ are both maximal 3-sum-free subsets of the non-negative integers of density $1/3$, so are both $\set{1, 2} + 6 \ZNN$ and $\set{5, 6} + 6 \ZNN$ (\L uczak and Schoen~\cite{LS97} showed that there are no others). We conjecture the corresponding result holds for the free semigroup.

\begin{conjecture}
    Let $\cA$ be a finite set and $\cF$ be the free semigroup with alphabet $\cA$. If $S \subset \cF$ is 3-product-free and $\db(S) = 1/3$, then one of the following hold. Either it is possible to label each letter of $\cA$ with a label in $\bZ/3\bZ$ such that $S$ is a subset of
    \begin{equation*}
        \set{w \in \cF \colon \text{the sum of the labels of letters in $w$ is $1 \bmod{3}$}},
    \end{equation*}
    or it is possible to label each letter of $\cA$ with a label in $\bZ/6\bZ$ such that $S$ is a subset of
    \begin{equation*}
        \set{w \in \cF \colon \text{the sum of the labels of letters in $w$ is $1, 2 \bmod{6}$}}.
    \end{equation*}
\end{conjecture}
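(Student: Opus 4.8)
The plan is to establish, for $k = 3$, the same dichotomy that governs the integer case: either $S$ is strongly $3$-product-free --- which forces the first alternative --- or $S$ meets $S^2$, which forces the second. By \cref{thm:pfdensity}\ref{thm:kpfdensity} we have $\di(S) = \dsup(S) = 1/3$, and since $1/3 > 1/4$ and $1/3 > 2/7$ this lets us invoke \cref{lem:densityuniform}, \cref{lem:disjoint} with $n = 4$, and \cref{lem:disjointtwo} with $n = 4$ throughout, exactly as in \cref{sec:skpfstructure,sec:kpfstructure}. Recall also the notation $S_A$ from \cref{def:SA}; since $S$ is $3$-product-free we have $S_{1, 3} = S \cap S^3 = \emptyset$, and therefore $S \cap x^2 S = \emptyset$ for every $x \in S$.

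Suppose first that $S_{1, 2} = S \cap S^2 = \emptyset$. Then $S$ is both $2$-product-free and $3$-product-free, hence strongly $3$-product-free, and $\db(S) = 1/3$, so \cref{thm:skpfstructure} with $k = 3$ applies verbatim and produces a labelling of $\cA$ by $\bZ/3\bZ$ with $S$ contained in $\set{w \in \cF \colon \text{the sum of the labels of letters in } w \text{ is } 1 \bmod 3}$. This is the first alternative, so from now on we assume $S_{1, 2} \notempty$.

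Put $\Sigma \coloneqq S \setminus S_{1, 2}$. A useful observation, and the source of the modulus $6$, is that $S_{1, 2} \subseteq \Sigma^2$: if $w = xy$ with $x, y \in S$ and $w \in S$, then neither $x$ nor $y$ can lie in $S^2$, since otherwise $w \in S^3$, contradicting $S \cap S^3 = \emptyset$. Thus in the sought labelling elements of $S_{1, 2}$ must carry label $2$ (a product of two label-$1$ words) while their cubes carry label $0$ --- so $S_{1, 2}$ supplies ``order-$3$'' generators, which is exactly what cannot be accommodated by a labelling modulo $3$. Mimicking \cref{sec:skpfstructure}, one defines for each $i \in \bZ/6\bZ$ a set $T_i \subseteq \cF$ that detects the label of a word $a$ from the pattern of which of the small products $a S^j$ (and $a \Sigma^j$) meet $S$, exploiting that a ``clean'' element of $\Sigma$ acts with period $6$ whereas a clean element of $S_{1, 2}$ acts with period $3$. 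One must then prove, paralleling \cref{prop:swap,prop:partition,prop:TS,prop:TT}: a reversal symmetry (immediate from the reverse map, as in \cref{prop:swap}); that the $T_i$ partition $\cF$ with $S \subseteq T_1 \cup T_2$; and the multiplicativity $T_i T_j \subseteq T_{i + j}$. Granting these, labelling each letter $\alpha$ by the unique $i$ with $\alpha \in T_i$ gives $S \subseteq T_1 \cup T_2 = \set{w \in \cF \colon \text{the sum of the labels of letters in } w \text{ is } 1 \text{ or } 2 \bmod 6}$, which (after negating all labels if necessary) is the second alternative.

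The main obstacle is the partition-and-multiplicativity step in the case $S_{1, 2} \notempty$. The arguments in \cref{sec:skpfstructure} rely on the sets $S, xS, x^2 S, \dotsc$ being pairwise disjoint for every $x \in S$, and this is precisely what breaks once $S$ fails to be $2$-product-free: one must instead single out ``clean'' generators, prepend powers of them with care, and absorb the non-clean elements using $S_{1, 2} \subseteq \Sigma^2$ and $S \cap x^2 S = \emptyset$. A second delicate point, absent in \cref{prop:S1rempty,prop:S1dempty}, is to show that the only available inputs --- $S \cap S^3 = \emptyset$ and $S \cap S^2 \notempty$, together with the disjointness constraints of \cref{lem:disjoint,lem:disjointtwo} at $n = 4$ --- force the period to divide $6$ but not $3$, rather than some larger modulus; this is the analogue of the finer part of \L uczak and Schoen's analysis for the non-negative integers, and carrying it out in the free semigroup is what makes $k = 3$ genuinely harder than the cases resolved in \cref{sec:kpfstructure}.
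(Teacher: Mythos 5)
The statement you are attempting is not proved in the paper at all: it is stated in \cref{sec:open} as an open conjecture, precisely because the methods of \cref{sec:kpfstructure} break down for $k \in \set{3, 5, 7, 13}$ (the inequality of \cref{lem:rhosize} fails for $k = 3$, so the disjointness arguments of \cref{prop:S1rempty,prop:S1dempty} cannot be run). Measured against that, your first case is sound but easy: if $S \cap S^2 = \emptyset$ then $S$ is strongly $3$-product-free and \cref{thm:skpfstructure} gives the first alternative, and your observations that $S \cap S^3 = \emptyset$ forces $S_{1,2} \subseteq \Sigma^2$ and $S \cap x^2 S = \emptyset$ are correct. But the entire content of the conjecture lies in the case $S \cap S^2 \notempty$, and there your text is a plan, not a proof: the sets $T_i$ for $i \in \bZ/6\bZ$ are never actually defined (you only say they should ``detect the label of a word $a$ from the pattern of which small products meet $S$''), and the analogues of \cref{prop:swap,prop:partition,prop:TS,prop:TT} --- the partition of $\cF$, the containment $S \subseteq T_1 \cup T_2$, and the multiplicativity $T_i T_j \subseteq T_{i+j}$ --- are exactly the statements you defer with ``one must then prove''. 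You yourself name these as ``the main obstacle'' and the ``delicate point'', which is an accurate self-assessment: the only quantitative tools available are \cref{lem:disjoint,lem:disjointtwo} with $n = 4$ (four nested translates), and it is not shown, and not clear, that these suffice to pin the period down to $6$ rather than some other modulus once the sets $S, xS, x^2S, \dotsc$ are no longer pairwise disjoint. So the argument has a genuine gap at its core rather than a fixable local error.

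Two smaller points. First, the claim that the \L uczak--Schoen integer analysis transfers is precisely what needs proof in the free semigroup; in \cref{sec:skpfstructure} the transfer from the integer intuition was only possible because strong product-freeness made all the translates $x^j S$ disjoint, and no substitute mechanism is proposed here. Second, the endgame is slightly off even formally: negating labels sends the residues $\set{1, 2} \bmod 6$ to $\set{4, 5} \bmod 6$, so if your (undefined) $T_i$'s really gave $S \subseteq T_1 \cup T_2$ no negation would be wanted, and if they gave $S \subseteq T_4 \cup T_5$ negation would be needed --- this should be sorted out once the $T_i$ are actually constructed.
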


\L uczak~\cite{Luczak95} proved that every sum-free subset of the non-negative integers with density greater than $2/5$ is a subset of the odd integers (\L uczak and Schoen proved similar results for (strongly) $k$-sum-free sets). Such strengthenings for subsets of the free semigroup are false as the constants $1/k$ in \cref{thm:skpfstructure} and $1/\rho(k)$ in \cref{thm:kpfstructure} cannot be replaced by anything smaller. For example, let $k = 2$, $T$ be the set of words of odd length, and $x$ be any word of even length. Let
\begin{equation*}
    \begin{split}
        T' \coloneqq & \set{w \in T \colon \text{neither $x$ nor $w$ is a prefix or suffix of the other}} \\
        & \cup \set{xwx \colon \text{$xwx$ has length $1 \bmod{3}$}}.
    \end{split}
\end{equation*}
Then $T'$ is product-free, has density at least $1/2 - 2 \abs{\cA}^{-\abs{x}}$, and is not a subset of an odd-occurrence set (in fact, a set of positive density would need to be removed before this happens). Nonetheless, $T'$ is a small perturbation from  the odd-occurrence set $T$. Hence, it is natural to ask whether there is some form of stability.

\begin{conjecture}
    For each $\delta > 0$, is there some $\epsilon > 0$ such that if $S \subset \cF$ is product-free and $\db(S) > 1/2 - \epsilon$, then there exists an odd-occurrence set $\cO_\Gamma$ such that $\db(S \setminus \cO_\Gamma) < \delta$?
\end{conjecture}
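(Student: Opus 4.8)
The plan is to establish a quantitative (``robust'') version of the $k=2$ case of \cref{thm:skpfstructure}: if $S$ is product-free with $\db(S)>1/2-\epsilon$, then $\db(S\setminus\cO_\Gamma)\le\eta(\epsilon)$ for some nonempty $\Gamma\subseteq\cA$, where $\eta(\epsilon)\to0$ as $\epsilon\to0$; this is exactly the conjectured stability. The architecture follows \cref{sec:skpfstructure}, but every step must be made to degrade gracefully. First I would record the consequences of near-extremality. As in the proof of \cref{thm:pfdensity}\ref{thm:skpfdensity} (its $k=2$ instance gives $1\ge\di(S)+\dsup(S)$) together with \cref{thm:LLNW}, we get $\di(S)=\db(S)\in(1/2-\epsilon,1/2]$ and $\dsup(S)\le1-\di(S)<1/2+\epsilon$, so the gap $\dsup(S)-\di(S)<2\epsilon$. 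Feeding this into the identity $\di(S)=\sum_{w\in\cF(\ell)}\mu(w)\,\di_{w\cF}(S)$ and using $\di_{w\cF}(S)\le\dsup(S)$ yields $\di_{w\cF}(S)\ge\di(S)-\abs{\cA}^{\abs{w}}\bigl(\dsup(S)-\di(S)\bigr)$ for every single $w$, and that at each depth $\ell$ the set of $w\in\cF(\ell)$ with $\di_{w\cF}(S)<1/2-\eta$ has $\mu$-measure at most $2\epsilon/\eta$. So subtrees of bounded depth all have $S$-density close to $1/2$, and at every depth the too-sparse subtrees are negligible.

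Next I would run the $k=2$ versions of \cref{prop:swap,prop:partition,prop:TS,prop:TT} with $T_0=\set{a\colon S\cap aS\notempty}$ and $T_1=\set{a\colon S\cap aS^2\notempty}$. The disjointness inputs coming from product-freeness of $S$ (e.g.\ ``$aS$ and $axS$ are disjoint when $x\in S$'') are exact and carry over unchanged. The places where those proofs invoke \cref{lem:disjoint,lem:disjointtwo,lem:densityuniform} are the only ones needing a robust form, and the cleanest fix is to replace the auxiliary element ``$x\in S$'' throughout by an $\epsilon$-capturing steeplechase $(C_k)$ for $S$ (\cref{lem:steepleB}): then $C_k^{\,r}\subseteq S^r$, $\mu(C_k)\to1/2$, and the relative densities $\di_{C_k\cF}(\,\cdot\,)$ supplied by \cref{lem:densityAB} stand in for the pointwise subtree densities. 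The outcome should be that $T_0\cup T_1$ covers $\cF$ up to density $\eta(\epsilon)$, that $T_0\cap T_1$ has density $\le\eta(\epsilon)$, that $S$ lies in $T_1$ up to density $\eta(\epsilon)$, and that $(T_iT_j)\setminus T_{i+j}$ has density $\le\eta(\epsilon)$, all with $\eta(\epsilon)\to0$. Since each letter $\alpha\in\cA$ is a depth-one word, the fixed-depth part of the first step (where all errors are genuinely $O(\epsilon)$) pins down a label in $\Ztwo$ for each $\alpha$; set $\Gamma=\set{\alpha\colon\text{label }1}$. Iterating the approximate relation $T_iT_j\subseteq T_{i+j}$ along a word shows $T_1$ and $\cO_\Gamma$ agree up to density $O(\eta(\epsilon))$, whence $\db(S\setminus\cO_\Gamma)\le\db(S\setminus T_1)+\db(T_1\setminus\cO_\Gamma)\le O(\eta(\epsilon))$, which is $<\delta$ once $\epsilon$ is small. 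A compactness reformulation makes the $\epsilon\to0$ bookkeeping automatic: take product-free $S_n$ with $\db(S_n)>1/2-1/n$ but $\db(S_n\setminus\cO_\Gamma)\ge\delta$ for all $\Gamma$; diagonalise over the countable set $\cF$ so that all $\di^{(n)}_{w\cF}(S_n)$ and all membership/meeting relations converge; observe from the inequality above that $\di^{(n)}_{w\cF}(S_n)\to1/2$ for \emph{every} $w$, so in the limit there is no surviving length cutoff; then replay \cref{sec:skpfstructure} on the limit data to produce $\Gamma_\infty$ and contradict $\db(S_n\setminus\cO_{\Gamma_\infty})\ge\delta$ for large $n$ via the covering argument of \cref{sec:pfdensity} (which uses $\dsup^{(n)}(S_n)\to1/2$ to control long words).

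The main obstacle is that \cref{sec:skpfstructure} genuinely uses \emph{elements} of $S$ — the word ``$x\in S$'' from which the powers $x,x^2,\dots$ are built and against which product-freeness is applied — whereas a limit of density-$1/2$ product-free sets need not contain any persistent word: the $S_n$ could be supported on ever-longer words, making the pointwise limit $S_\infty$ empty. This is precisely why the substance of the proof is a \emph{quantitative} (steeplechase) structure theorem for each $S_n$ rather than the compactness wrapper: the steeplechase layers $C_k\subseteq S$ are the objects that survive, and \cref{lem:densityAB} is exactly the tool that turns ``$S$ is dense somewhere'' into usable relative-density lower bounds without a single named element. The second delicate point — and the reason one cannot naively take $\epsilon\to0$ in a crude quantitative version — is error propagation: the robust form of \cref{lem:disjoint} loses roughly $\epsilon\,\abs{\cA}^{\ell}$ when a prefix of length $\ell$ is stripped, so either one restricts to prefixes of length $o(\log_{\abs{\cA}}(1/\epsilon))$ (legitimate, because the labelling is local: letter-labels depend only on short prefixes, and $T_iT_j\subseteq T_{i+j}$ can be iterated letter by letter, each step involving only a bounded-length steeplechase witness), or one absorbs the deep/too-sparse subtrees into the error $\delta$ using that, although the sparse subtrees at any \emph{fixed} depth have small $\mu$-measure, the set of words that are themselves sparse has small upper Banach density $O(\epsilon/\eta)$. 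Getting the iteration of $T_iT_j\subseteq T_{i+j}$ along a long word to keep its error $O(\eta)$ rather than compounding with the word length is the technical heart of the argument; in the steeplechase formulation each step of the iteration contributes a fresh bounded error applied to the image of a small-density set, which should prevent compounding.
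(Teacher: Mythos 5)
You should first be aware that this statement is not a theorem of the paper at all: it is posed in \cref{sec:open} as an open problem (a stability conjecture), and the paper offers no proof of it, only the example $T'$ showing that exact containment in an odd-occurrence set fails once the density drops below $1/2$. So there is no ``paper proof'' for your argument to match, and what you have written is a research plan rather than a proof. Judged on its own terms it has genuine gaps, which you yourself flag but do not close. The structural machinery of \cref{sec:skpfstructure} is not merely ``exact where yours is approximate'': it runs entirely through \cref{lem:disjoint,lem:disjointtwo}, whose hypothesis is the exact equality $\di(S)=\dsup(S)$, precisely so that \cref{lem:densityuniform} forces \emph{every} subtree density $\di_{w\cF}(S)$ to equal $\di(S)$, for words $w=w_1\dotsb w_n$ built from arbitrary, arbitrarily long elements of $S$. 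Your substitute bound $\di_{w\cF}(S)\ge\di(S)-\abs{\cA}^{\abs{w}}\bigl(\dsup(S)-\di(S)\bigr)$ is correct but vacuous exactly in the regime where it is needed: in \cref{prop:swap,prop:partition,prop:TS,prop:TT} the stripped prefixes are powers $x^{k-1}$, $ax^{k-1}$, $z_1^{k-j}z_2\dotsb$ of \emph{named} elements of $S$, whose lengths are unbounded and unrelated to $\epsilon$. Your proposed fix --- replacing the named witness $x\in S$ by steeplechase layers $C_k$ and \cref{lem:densityAB} --- is not carried out, and it is not routine: those propositions use the \emph{same} word $x$ (or $xay$, $z_1\dotsb z_{j+1}a$) in several nested sets simultaneously and invoke product-freeness against that specific word to get pairwise disjointness; \cref{lem:densityAB} only provides density lower bounds for products with a whole layer $C_k$, and it is unclear how the disjointness structure survives when the witness is smeared over a set. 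Likewise the claim that iterating the approximate relation $T_iT_j\subset T_{i+j}$ letter by letter along a long word keeps the error $O(\eta)$ rather than compounding with the length is asserted (``should prevent compounding''), not proved, and you correctly identify it as the technical heart.

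The compactness wrapper does not rescue this: as you note, a sequence $S_n$ of near-extremal product-free sets supported on ever-longer words has empty pointwise limit, so there is no limit object on which to ``replay \cref{sec:skpfstructure}''; the convergence of the densities $\di^{(n)}_{w\cF}(S_n)$ alone does not reconstruct the combinatorial relations (membership of products, the sets $T_i$) that the structural argument manipulates. In short, the quantitative stability statement is exactly what is open, the known proof of \cref{thm:skpfstructure} degenerates at the two points you identify (depth-unbounded uniformity and error propagation along long words), and your proposal does not yet supply the new ingredient needed to overcome them.
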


\Cref{thm:skpfstructure,thm:kpfstructure} give the structure of extremal (strongly) $k$-product-free sets in the free semigroup. The free group case remains. The simplest open case is the following

\begin{conjecture}
    Let $\cA$ be a finite set and $\fF$ be the free group with alphabet $\cA$. If $S \subset \fF$ is product-free and $\db(S) = 1/2$, then the following holds. It is possible to label each letter of $\cA \cup \cA^{-1}$ with a label in $\Ztwo$ such that the label of $\alpha^{-1}$ is the negation of the label of $\alpha$ for all $\alpha \in \cA$ and $S$ is a subset of
    \begin{equation*}
        T \coloneqq \set{w \in \fF \colon \text{the sum of the labels of letters in $w$ is $1 \bmod{2}$}}.
    \end{equation*}
\end{conjecture}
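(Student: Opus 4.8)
The plan is to carry out the argument of \cref{sec:skpfstructure} separately inside each subsemigroup $\fG = \fF^{\alpha\beta}$ of words starting with $\alpha$ and ending with $\beta$ (for $\alpha,\beta \in \cA \cup \cA^{-1}$ with $\alpha \neq \beta^{-1}$), and then to glue the resulting $\Ztwo$-gradings into one labelling of $\cA \cup \cA^{-1}$; I expect the gluing to be the main obstacle. The first ingredient is the sharp density statement $\di_\fG(S) = 1/2$ — not merely $\di_\fG(S) \leq 1/2$ — for every valid pair $(\alpha,\beta)$, with respect to a fixed diagonalising sequence $(I_j)$ as in \cref{sec:pfdensityfree}, since the structural machinery needs $\di_\fG(S) = \dsupG(S)$. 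This comes from an averaging argument: write $\fF$ as a disjoint union of subsemigroups $\fF^{pq}$ with compatible prefix/suffix pairs $(p,q)$, together with the ``nearly inverse-palindromic'' words ($w$ with $w_i = w_{\abs{w}+1-i}^{-1}$ for all $i$ below a threshold), the latter of vanishing density as the threshold grows and absorbed, up to arbitrarily small error, by finer $\fF^{pq}$'s; each $S \cap \fF^{pq}$ is product-free in $\fF^{pq}$, hence of relative density at most $1/2$ by the routine analogue of \cref{thm:pfdensityg}, and since the weighted average of these relative densities equals $\db(S) = 1/2$, each must equal $1/2$. In particular $\di_\fG(S) = 1/2$ for $\fG = \fF^{\alpha\beta}$, whence $\dsupG(S) = 1/2$ by the ``moreover'' clause of \cref{thm:pfdensityg}.

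\textbf{Local structure.} Fix $\fG = \fF^{\alpha\beta}$ and put $S_\fG = S \cap \fG$, which is product-free in the semigroup $\fG$ and has $\di_\fG(S_\fG) = \dsupG(S_\fG) = 1/2$. The results of \cref{sec:technical} transfer to the $\fG$-relative setting: \cref{lem:densitycancelfree} supplies prefix-stripping, and the $\fG$-relative analogues of \cref{lem:densityuniform,lem:disjoint,lem:disjointtwo} follow along the same lines, partitioning the long words of $\fG$ by an appropriate prefix lying in $\fG$. Then \cref{sec:skpfstructure} transfers verbatim with $k = \rho = 2$: setting $T_i^\fG = \set{a \in \fG \colon S_\fG \cap a S_\fG^{\,i+1} \notempty}$ for $i \in \set{0,1}$, the arguments of \cref{prop:swap,prop:partition,prop:TS,prop:TT} show that $\set{T_0^\fG, T_1^\fG}$ partitions $\fG$, that $S_\fG \subseteq T_1^\fG$, and that $T_i^\fG T_j^\fG \subseteq T_{i+j}^\fG$; equivalently there is a semigroup homomorphism $\psi_\fG \colon \fG \to \Ztwo$ with $S_\fG \subseteq \psi_\fG^{-1}(1)$. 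As the subgroup of $\fF$ generated by $\fF^{\alpha\beta}$ is all of $\fF$ (when $\abs{\cA} \geq 2$), $\psi_\fG$ extends to a group homomorphism $\fF \to \Ztwo$, that is, to a labelling $\lambda_\fG \colon \cA \cup \cA^{-1} \to \Ztwo$ with $\lambda_\fG(\alpha^{-1}) = -\lambda_\fG(\alpha)$, and $\psi_\fG(w)$ is then the $\lambda_\fG$-label-sum of $w$ for every $w \in \fF^{\alpha\beta}$.

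\textbf{Gluing.} It remains to prove that the labellings $\lambda_{\fF^{\alpha\beta}}$ all coincide — call the common one $\lambda$, which automatically satisfies $\lambda(\alpha^{-1}) = -\lambda(\alpha)$, precisely the condition making the label-sum well defined on $\fF$ — and then that $S \subseteq T \coloneqq \set{w \in \fF \colon \text{the $\lambda$-label-sum of $w$ is } 1 \bmod 2}$. The set $T$ is product-free since labels add over reduced concatenations and are unchanged by cancellation. Granted $\lambda$, any $w \in S$ of length $\geq 2$ whose first and last letters are not mutually inverse lies in $S \cap \fF^{\mathrm{first}(w),\,\mathrm{last}(w)} \subseteq T$, while the remaining elements of $S$ — those of length $\leq 1$, or beginning and ending with mutually inverse letters — are handled by sandwiching them with suitably chosen elements of $S$ so that the resulting product lands in some $\fF^{\alpha\beta}$ and then invoking product-freeness, as in the semigroup structural proofs (the case $\abs{\cA} = 1$ is the classical integer result of \L{}uczak and Schoen~\cite{LS97}). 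Proving $\lambda_{\fF^{\alpha\beta}} = \lambda_{\fF^{\alpha'\beta'}}$ cannot be done inside a single subsemigroup: since the $\fF^{\alpha\beta}$ are pairwise disjoint, the homomorphisms must be compared through relations among products of elements of $S$ that bridge several of them, exploiting that $S$ is the same product-free density-$1/2$ set inside all of them.

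\textbf{Main obstacle.} The genuinely hard part — and the reason the statement is only conjectured — is this reconciliation. In the free semigroup the analogous step is vacuous because $\cF$ is free on $\cA$, so the $\Ztwo$-grading produced by the $T_i$-machinery is automatically a letter-labelling; in the free group one is forced to split $\fF$ into the overlap-free subsemigroups $\fF^{\alpha\beta}$, each carrying its own grading, and one must use the global product-free structure of the density-$1/2$ set $S$ to force these gradings to agree. The averaging in the density step and the treatment of the exceptional words are comparatively routine, but still require some care, especially for small alphabets.
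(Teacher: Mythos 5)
You have not proved this statement, and you should be aware that the paper does not either: it is posed as an open problem, and the closing remark of \cref{sec:open} records exactly the partial progress your proposal reaches, namely a labelling of the words of each subsemigroup $\fF^{\alpha\beta}$ that is additive under concatenation and gives every word of $S \cap \fF^{\alpha\beta}$ the label $1$, with the explicit caveat that ``what is missing is an understanding of how the labellings interact when letters cancel during concatenation.'' Your proposal hits the same wall, and moreover one step is asserted where the real difficulty lives: you claim that the semigroup homomorphism $\psi_\fG \colon \fF^{\alpha\beta} \to \Ztwo$ extends to a group homomorphism $\fF \to \Ztwo$ (equivalently, comes from a labelling of $\cA \cup \cA^{-1}$ with $\lambda(\alpha^{-1}) = -\lambda(\alpha)$) because $\fF^{\alpha\beta}$ generates $\fF$. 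That inference is invalid. $\fF^{\alpha\beta}$ is a free semigroup on an infinite set of indecomposable words, so it carries many $\Ztwo$-gradings that are not restrictions of group homomorphisms of $\fF$; a homomorphism defined on a generating subsemigroup extends to the group only if it respects all relations produced by cancellation, and establishing that compatibility \emph{is} the missing step, not a consequence of generation. What your ``local structure'' paragraph legitimately yields (via the transfer of \cref{sec:skpfstructure} and the ``moreover'' clause of \cref{thm:pfdensityg}, which is fine) is precisely the cancellation-free grading the paper already notes, and nothing more.

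The remaining ingredients are likewise descriptions of tasks rather than arguments. The reconciliation of the labellings $\lambda_{\fF^{\alpha\beta}}$ for different pairs $(\alpha,\beta)$, and the treatment of the exceptional elements of $S$ (length at most one, or first and last letters mutually inverse), are only gestured at (``compared through relations among products of elements of $S$ that bridge several of them'', ``handled by sandwiching''), and you yourself identify this reconciliation as the reason the statement is only conjectured. The preliminary averaging step also needs more than is written: the words whose first and last letters are mutually inverse have positive density, so they cannot be dismissed as negligible; absorbing them requires the conjugated copies $u\,\fF^{pq}\,u^{-1}$ rather than ``finer $\fF^{pq}$'s'', together with a relative density bound for product-free sets in those copies --- this is the translation carried out via the arguments of \cite{ORS23}, and while it plausibly gives $\di_\fG(S) = 1/2$ and hence $\dsupG(S) = 1/2$ for every valid $\fG$, it is not established in your sketch. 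In summary, your proposal correctly reconstructs the partial structure the paper already records, but the essential content of the conjecture --- that each subsemigroup grading is induced by a single letter labelling compatible with inversion and cancellation, and that these labellings coincide --- is left entirely open, so no proof has been given.
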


For strongly $k$-product-free we expect the above conjecture to hold with 2 replaced by $k$. For $k$-product-free we expect the behaviour to be the same as for the free semigroup.

We remark that our methods do give some structure. Similar arguments to \cref{sec:skpfstructure} show there is a labelling of all words in the subsemigroup $\fF^{\alpha \beta}$ (defined in \cref{sec:pfdensityfree}) such that the label of a concatenation is the sum of the individual labels and all words in $S \cap \fF^{\alpha \beta}$ have label 1. What is missing is an understanding of how the labellings interact when letters cancel during concatenation.

{
\fontsize{11pt}{12pt}
\selectfont
	
\hypersetup{linkcolor={red!70!black}}
\setlength{\parskip}{2pt plus 0.3ex minus 0.3ex}

}

\end{document}